\theoremstyle{plain}
   \newtheorem{theorem}{Theorem}[section]
   \newtheorem{proposition}[theorem]{Proposition}
   \newtheorem{lemma}[theorem]{Lemma}
   \newtheorem{corollary}[theorem]{Corollary}
\theoremstyle{definition}
   \newtheorem{definition}[theorem]{Definition}
   \newtheorem{defn}[theorem]{Definition}
   \newtheorem{remark}[theorem]{Remark}
   \newcommand{\f}{\mathcal F}
\newcommand{\ch}{{\operatorname{ch}}}
\numberwithin{equation}{section}
\DeclareMathOperator\sign{sign}
\newcommand{\ve}{\varepsilon}
\newcommand{\Des}{{\operatorname{Des}}}
\newcommand{\C}{C}
\newcommand{\roots}{\rho}
\newcommand{\sroots}{{\overline{\rho}}}
\newcommand{\up}{{\operatorname{\uparrow}}}
\def\NN{{\mathbb N}}
\def\ZZ{{\mathbb Z}}
\def\CC{{\mathbb C}}
\newcommand{\Q}{\mathcal Q}
\newcommand{\blambda}{{\underline\lambda}}
\newcommand{\bnu}{{\underline\nu}}
\newcommand{\bs}{{\underline s}}
\newcommand{\bt}{{\underline t}}
\newcommand{\oomega}{{\overline\omega}}
\newcommand{\OP}{OP}
\newcommand{\EP}{EP}
\title[Descent sets for permutations with cycles of odd or even lengths]{Descent set distribution for permutations \\ with cycles of only odd or only even lengths}
\author{Ron M.\ Adin}
\address{Department of Mathematics, Bar-Ilan University, 
	Ramat-Gan 52900, Israel}
\email{radin@math.biu.ac.il}
\author{P\'al Heged\H{u}s}
\address{Department of Algebra and Geometry, Institute of Mathematics, Budapest University of Technology and Economics, M\H uegyetem rkp. 3., H-1111 Budapest, Hungary}
\email{hegpal@math.bme.hu}
\author{Yuval Roichman}
\address{Department of Mathematics, Bar-Ilan University, 
Ramat-Gan 52900, Israel}
\email{yuvalr@math.biu.ac.il}
\date{February 5, 2025}
\begin{document}
 
\begin{abstract}
    It is known that 
    the number of permutations in the symmetric group $S_{2n}$ with cycles of odd lengths only is equal to the number of permutations with cycles of even lengths only.
    We prove a refinement of this equality, involving descent sets: the number of permutations in $S_{2n}$ with a prescribed descent set and all cycles of odd lengths is equal to the number of permutations with the complementary descent set and all cycles of even lengths.  
    There is also a variant for $S_{2n+1}$. 
    The proof uses generating functions for character values and applies a new identity on higher Lie characters.  
\end{abstract}

\maketitle

\tableofcontents

\section{Introduction}

For a positive even integer $n$, let $OC(n)$ be the set of all permutations $\pi \in S_n$ with cycles of odd lengths only,   
and let $EC(n)$ be the set of all permutations $\pi \in S_n$ with cycles of even lengths only. 
It is known that, for every positive even $n$, 
\[
    |OC(n)| = |EC(n)| =  (n-1)!!^2, 
\]
where the double factorial $(n-1)!! := (n-1) \cdot (n-3) \cdot (n-5) \cdots$.
See, e.g.,~\cite[Theorem 6.24]{Bona} and [OEIS A001818]. 

In this paper we prove the following remarkable refinement.

First, let us extend the above definitions to odd values of $n$.
For a positive odd integer $n$, let $OC(n)$ be the set of all permutations $\pi \in S_n$ with cycles of odd lengths only,   
and let $EC(n)$ be the set of all permutations $\pi \in S_n$ with cycles of even lengths only, plus a single fixed point. 
The {\em descent set} of a permutation $\pi = [\pi_1, \ldots, \pi_n] \in S_n$ 
is
\[
    \Des(\pi) := \{1 \le i \le n-1 \,:\, \pi_i > \pi_{i+1} \}
    \quad \subseteq [n-1],
\]
where $[m]:=\{1,2,\ldots,m\}$.

\begin{theorem}\label{t:equid}
    For any positive integer $n$ and subset $J \subseteq [n-1]$,
    \[
        |\{\pi \in OC(n) \,:\, \Des(\pi) = J\}| 
        = |\{\pi \in EC(n) \,:\, \Des(\pi) = [n-1] \setminus J\}|.
    \]
\end{theorem}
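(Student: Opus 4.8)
The plan is to convert the descent-set enumeration into an equality of symmetric functions via the Gessel--Reutenauer theory, and then to prove that equality using plethystic generating functions for the higher Lie characters.

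\textbf{Step 1: reduction to symmetric functions.} For a union of conjugacy classes $A\subseteq S_n$ I set $\mathcal F(A):=\sum_{\pi\in A}F_{\Des(\pi)}$, where $F_S$ denotes Gessel's fundamental quasisymmetric function indexed by $S\subseteq[n-1]$. Since the fundamental quasisymmetric functions are linearly independent, the coefficient of $F_J$ in $\mathcal F(A)$ is exactly $|\{\pi\in A:\Des(\pi)=J\}|$. By the Gessel--Reutenauer theorem, for a conjugacy class $C_\lambda$ the series $\mathcal F(C_\lambda)$ is symmetric and equals $\ch(\mathfrak l_\lambda)$, the Frobenius characteristic of the higher Lie character; explicitly $\ch(\mathfrak l_\lambda)=\prod_d h_{m_d}[\ell_d]$, where $m_d$ is the number of parts of $\lambda$ equal to $d$, $\ell_d=\ch(\mathrm{Lie}_d)=\tfrac1d\sum_{e\mid d}\mu(e)p_e^{d/e}$, and $[\,\cdot\,]$ denotes plethysm. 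Summing over the relevant classes produces symmetric functions $\mathcal F(OC(n))$ and $\mathcal F(EC(n))$. Using the standard fact $\omega(F_S)=F_{[n-1]\setminus S}$ (proved, e.g., by transposing standard Young tableaux) together with $\omega\circ\ch=\ch\circ(-\otimes\sign)$, the theorem becomes equivalent, for every $n$, to $\mathcal F(OC(n))=\omega\,\mathcal F(EC(n))$, i.e.\ to the character identity
\[
\sum_{\substack{\lambda\vdash n\\ \text{all parts odd}}}\mathfrak l_\lambda\;=\;\sum_{\mu\in\mathcal E_n}\mathfrak l_\mu\otimes\sign,
\]
where $\mathcal E_n$ is the set of cycle types occurring in $EC(n)$.

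\textbf{Step 2: plethystic generating functions.} I would encode both sides in grand generating functions via the plethystic exponential $\mathrm{Exp}(g):=\sum_{m\ge0}h_m[g]$. Because a higher Lie character factors as a product over part sizes, summing over all odd cycle types, respectively all even cycle types with an optional single fixed point, yields
\[
\sum_{n\ge0}\mathcal F(OC(n))=\prod_{d\ \mathrm{odd}}\mathrm{Exp}(\ell_d),\qquad \sum_{n\ge0}\mathcal F(EC(n))=(1+\ell_1)\prod_{d\ \mathrm{even}}\mathrm{Exp}(\ell_d),
\]
the factor $1+\ell_1$ encoding the at-most-one fixed point (so that at even $n$ only the empty choice contributes and at odd $n$ only the single fixed point). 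Applying $\omega$ and using the plethysm rule $\omega(f[g])=(\omega f)[\omega g]$ for $g$ of odd degree and $\omega(f[g])=f[\omega g]$ for $g$ of even degree, together with $\omega\ell_d=\ell_d$ for odd $d$, reduces everything to one symmetric-function identity, call it $(\star)$:
\[
\prod_{d\ \mathrm{odd}}\mathrm{Exp}(\ell_d)\;=\;(1+p_1)\prod_{d\ \mathrm{even}}\mathrm{Exp}(\omega\ell_d),
\]
to be read degree by degree.

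\textbf{Step 3: the identity $(\star)$ and the main obstacle.} This is the ``new identity on higher Lie characters'' and is where the real work lies. I would prove it by taking plethystic logarithms of both sides and comparing Adams operations $\psi^k=p_k[\,\cdot\,]$, thereby reducing $(\star)$ to an identity among the sign-twisted Lie symmetric functions $\omega\ell_d=\ch(\mathrm{Lie}_d\otimes\sign)$ for even $d$. The scalar shadow of $(\star)$ is the classical exponential-generating-function identity $\sqrt{(1+x)/(1-x)}=(1+x)/\sqrt{1-x^2}$ underlying $|OC(2n)|=|EC(2n)|$, and the content of $(\star)$ is to lift this to the full symmetric-function (hence descent-set) level. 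The main obstacle is precisely the control of the even-index twists $\omega\ell_d$: unlike the odd case, where $\omega\ell_d=\ell_d$, these are genuinely new symmetric functions, and matching the odd-indexed product against the even-indexed twisted product requires understanding how $\sign$ interacts with the plethysms $h_m[\ell_d]$ --- equivalently, how $\sign$ restricts to the centralizers of even-cycle elements. Establishing this comparison, most plausibly through a $\lambda$-ring manipulation that isolates the $(1+p_1)$ discrepancy, is the crux of the argument.
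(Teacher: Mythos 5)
Your Steps 1 and 2 are correct and coincide with the paper's reduction: the passage through fundamental quasisymmetric functions, $\omega(\f_{n,S}) = \f_{n,[n-1]\setminus S}$, and $\omega \circ \ch = \ch \circ (\sign \otimes -)$ is exactly how the paper proves that Theorem~\ref{t:equid} is equivalent to Theorem~\ref{t:OP-EP} (Subsection~\ref{subsec:quasi}), and your product formula over part sizes, with the factor $1+\ell_1$ handling the optional fixed point for odd $n$, correctly packages Theorem~\ref{t:OP-EP} as your identity $(\star)$. The genuine gap is Step 3: $(\star)$ \emph{is} the paper's main new result (the ``apparently new identity on higher Lie characters''), and you do not prove it --- you name a strategy (``take plethystic logarithms and compare Adams operations''), observe that the even-index twists $\omega\ell_d$ are the obstacle, and stop, explicitly labeling the comparison as ``the crux.'' A reduction of the theorem to an unproved identity of the same depth is not a proof; everything from Section~\ref{sec:gf_hLc_A} through Section~\ref{sec:proofs} of the paper is devoted precisely to what you defer.

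It is worth noting that your proposed strategy for Step 3, carried out in full, would essentially reproduce the paper's argument. Expanding $\log \mathrm{Exp}(s_i\ell_i) = \sum_k p_k[s_i\ell_i]/k$ in power sums, with $\ell_i = \frac1i\sum_{e\mid i}\mu(e)p_e^{i/e}$ and $p_j \mapsto t_j$, gives exactly the exponent $\sum_j\sum_{e\mid\gcd(i,j)}\mu(e)\,s_i^{j/e}t_j^{i/e}/(ij/e)$ of Theorem~\ref{t:summation_4_A}; applying $\omega$ (i.e., $p_e \mapsto (-1)^{e-1}p_e$) produces the sign $(-1)^{i(j-1)/e}$ of the signed analogue, Theorem~\ref{t:summation_4_A_signed}. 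The remaining work --- which your sketch omits entirely --- is the specialization and M\"obius-sum collapse of Section~\ref{sec:proofs}: setting the $s$-variables to $0/1$ according to parity, using $\sum_{e\mid j,\ e\ \mathrm{odd}}\mu(e) = \delta$ to force $j = 2^p$, and verifying that \emph{both} sides reduce to the same product $\prod_{p\ge 0}\bigl(\frac{1+t_{2^p}}{1-t_{2^p}}\bigr)^{1/2^{p+1}}$, including the separate bookkeeping for odd $n$ (the single part of size $1$ in $\EP(n)$, which contributes the factor $t_1$ and combines via $(1+t_1)(1-t_1^2)^{-1/2} = \bigl(\frac{1+t_1}{1-t_1}\bigr)^{1/2}$). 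Your claim that the scalar shadow of $(\star)$ is $\sqrt{(1+x)/(1-x)} = (1+x)/\sqrt{1-x^2}$ is consistent with this, but the lift from the scalar identity to the full character identity is exactly the content of the theorem, not a routine $\lambda$-ring manipulation, and it remains to be done.
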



Theorem~\ref{t:equid} is equivalent to an apparently new identity on higher Lie characters, Theorem~\ref{t:OP-EP}. 
For any partition $\lambda \vdash n$, let $\psi^\lambda_{S_n}$ be the corresponding higher Lie character of $S_n$ (for a definition see Subsection~\ref{subsec:def_hLc_A}). 
For any positive integer $n$, let $\OP(n)$ be the set of all partitions of $n$ with odd parts only.
For a positive even integer $n$, let $\EP(n)$ be the set of all partitions of $n$ with even parts only;
and, for a positive odd integer $n$, let $\EP(n)$ be the set of all partitions of $n$ with all parts even, except for a single part of size $1$.

\begin{theorem}\label{t:OP-EP}
    For any positive integer $n$,
    \[
        \sum_{\lambda \in \OP(n)} \psi_{S_n}^\lambda
        = \sign \otimes \sum_{\lambda \in \EP(n)} \psi_{S_n}^\lambda ,
    \]
    where $\sign$ is the sign character of $S_{n}$.
\end{theorem}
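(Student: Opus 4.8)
The plan is to prove the character identity by computing Frobenius characteristics and manipulating symmetric functions. Recall that the higher Lie character $\psi^\lambda_{S_n}$ associated to a partition $\lambda = (1^{m_1} 2^{m_2} \cdots)$ is induced from a product of Lie characters on the cyclic factors of the centralizer of an element of cycle type $\lambda$; its Frobenius characteristic $\ch(\psi^\lambda_{S_n})$ can be written explicitly as a plethystic expression in the power-sum symmetric functions, using the classical formula for the characteristic of the Lie character $\ell_d$ of a single $d$-cycle, namely $\ch(\ell_d) = \frac{1}{d}\sum_{e \mid d} \mu(e)\, p_e^{d/e}$. Thus the key is to pass from the representation-theoretic statement to an identity of symmetric functions under the Frobenius isomorphism, where $\sign \otimes (-)$ corresponds to applying the standard involution $\omega$.

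First I would reduce both sides of Theorem~\ref{t:OP-EP} to their Frobenius characteristics, so the goal becomes
\[
    \sum_{\lambda \in \OP(n)} \ch(\psi^\lambda_{S_n})
    = \omega \Bigl( \sum_{\lambda \in \EP(n)} \ch(\psi^\lambda_{S_n}) \Bigr).
\]
Next I would assemble each side into a generating function over all $n$ simultaneously. Summing $\ch(\psi^\lambda)$ over partitions $\lambda$ with parts from a prescribed multiset of allowed sizes factors, because the higher Lie character is induced from a direct product over cycle-length blocks; this yields an infinite product over the allowed part sizes $d$ of a local factor built from $\ch(\ell_d)$. For $\OP(n)$ the allowed $d$ are the odd integers, and for $\EP(n)$ they are the even integers (with the exceptional single part of size $1$ in the odd-$n$ case handled by an extra factor of $p_1$). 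The strategy is then to show these two infinite products coincide after applying $\omega$.

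The decisive computation is to understand how $\omega$ acts on the local factor for an even part of size $d = 2c$ and to match it against the local factor for odd parts. Since $\omega(p_e) = (-1)^{e-1} p_e$, the involution $\omega$ acts diagonally on power sums, and the plethystic product structure should let me track the sign contributions part-by-part. I expect that the per-part factor attached to an even cycle length, after applying $\omega$, reorganizes into exactly the factors attached to odd cycle lengths; the generating-function identity on Lie characters stated as assumed machinery in the paper should be precisely the engine that collapses the even side onto the odd side. The exceptional single fixed point in the odd case contributes the harmless factor $p_1 = h_1 = e_1$, which is $\omega$-invariant, so the parity bookkeeping for odd and even $n$ is uniform.

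The main obstacle, I anticipate, is the sign tracking: proving that the cumulative effect of the $(-1)^{e-1}$ factors from $\omega$ on all the power sums appearing in $\ch(\ell_{2c})$ conspires to turn the even-part generating function into the odd-part one. The Möbius coefficients $\mu(e)$ inside $\ch(\ell_d)$ interact nontrivially with these signs, and controlling their product over divisors—so that the even/odd divisor structure of $2c$ maps correctly onto the divisor structure governing odd parts—is the technical heart of the argument. I would isolate this as a standalone lemma on $\omega$ applied to the higher Lie generating function, prove it by comparing the logarithms of the two infinite products (turning products into sums over part sizes and divisors, where the Möbius and sign arithmetic becomes a finite divisor-sum identity for each degree), and then exponentiate to recover the full statement.
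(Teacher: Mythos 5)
Your proposal is correct in outline, and its computational heart coincides with the paper's, but you reach that heart by a genuinely different route. The paper never invokes plethysm or Thrall's formula: it derives from scratch a two-variable generating function for the character \emph{values} $\psi_{S_n}^\lambda(\nu)$ (Theorem~\ref{t:summation_4_A}, with signed analogue Theorem~\ref{t:summation_4_A_signed}), by evaluating the induced character on the centralizer $\prod_i G_i \wr S_{a_i}$, the M\"obius function arising from sums of roots of unity (Lemma~\ref{t:Mobius}); it then specializes $s_i$ to the indicator of odd (resp.\ even) part sizes and performs the divisor-sum arithmetic in Theorems~\ref{t:odd_cycles} and~\ref{t:even_cycles}. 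You instead import as known machinery Thrall's formula $\ch(\psi^\lambda_{S_n})=\prod_d h_{m_d}\bigl[\ch(\ell_d)\bigr]$ with $\ch(\ell_d)=\frac{1}{d}\sum_{e\mid d}\mu(e)\,p_e^{d/e}$. Once granted, your setup is literally the paper's in disguise: since $\ch(\chi)=\sum_\nu \chi(\nu)\,p_\nu/z_\nu$ and $z_\nu=|Z_\nu|$, the substitution $p_j\mapsto t_j$ identifies your power-sum expansions with the left-hand sides of Theorems~\ref{t:odd_cycles} and~\ref{t:even_cycles}, and $\ch(\sign\otimes\chi)=\omega\,\ch(\chi)$ accounts exactly for the $\sign(\nu)$ twist. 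Your endgame --- logarithms of the infinite products, finite M\"obius-sign divisor identities in each degree, exponentiation --- is then precisely the paper's: both sides collapse to $\prod_{p\ge 0}\bigl((1+t_{2^p})/(1-t_{2^p})\bigr)^{1/2^{p+1}}$, with the $\omega$-invariant factor $t_1$ covering the exceptional fixed point for odd $n$. Your route buys brevity at the cost of citing Thrall's theorem; the paper's self-contained derivation also yields the refined identity with the $s_i$ variables, which it reuses later (e.g., in proving Theorem~\ref{t:induced_to_even}).

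One caution for the write-up: $\omega$ does not commute naively with plethysm (for homogeneous $g$, $\omega(f[g])=f[\omega g]$ only when $\deg g$ is even; for odd degree an extra $\omega$ hits $f$), so ``tracking sign contributions part-by-part'' through the plethystic product is not literally available. The safe move is the one you in fact propose: first expand the logarithm as a series in the $p_k$, and only then apply $\omega$, which is a ring automorphism acting diagonally by $p_k\mapsto(-1)^{k-1}p_k$. Doing so attaches to the term with even part size $i=d$, divisor $e\mid d$, and $j=ke$ the sign $\bigl((-1)^{ke-1}\bigr)^{d/e}=(-1)^{i(j-1)/e}$, matching Theorem~\ref{t:summation_4_A_signed} exactly; the remaining arithmetic (write $j=2^p(2q+1)$ and use $\sum_{e\mid j,\ e\ \mathrm{odd}}\mu(e)=\delta_{q,0}$ together with its even-divisor companion) is precisely the computation in the paper's proof of Theorem~\ref{t:even_cycles}, so your standalone lemma does close the argument.
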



This result follows, in turn, from two explicit generating functions, Theorem~\ref{t:odd_cycles} and Theorem~\ref{t:even_cycles}.
For a partition $\nu \vdash n$, let $b_{j}$ be the number of parts of size $j$ in $\nu$ $(\forall\, j \ge 1)$. 
Let
\[
    |Z_\nu| = \frac{n!}{\prod_j b_j! j^{b_j}}
\]
be the size of the centralizer $Z_\nu$ of any element of cycle type $\nu$ in $S_n$. 
Let $\bt = (t_{j})_{j \ge 1}$ be a countable set of indeterminates, 
consider the ring $\CC[[\bt]]$ of formal power series in these indeterminates, 
and denote 
$\bt^{c(\nu)} := \prod_{j} t_{j}^{b_{j}}$.

\begin{theorem}\label{t:odd_cycles}
    \[
        \sum_{n \ge 0}  
        \sum_{\lambda \in \OP(n)} 
        \sum_{\nu \vdash n} 
        \psi_{S_n}^\lambda(\nu) \,
        \frac{\bt^{c(\nu)}}{|Z_\nu|} 
        = \prod_{p \ge 0} 
        \left( \frac{1 + t_{2^p}}{1 - t_{2^p}} \right)^{1/2^{p+1}} .
    \]
\end{theorem}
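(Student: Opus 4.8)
The plan is to identify the left-hand side as a sum of Frobenius characteristics and then evaluate that sum plethystically. Write $L_j := \ch(\ell_j)$ for the Frobenius characteristic of the classical Lie character $\ell_j$ of $S_j$, so that $L_j = \frac{1}{j}\sum_{d \mid j}\mu(d)\, p_d^{j/d}$ in the power-sum basis. Since $\ch(\chi) = \sum_{\nu \vdash n}\frac{\chi(\nu)}{|Z_\nu|}p_\nu$ for any class function $\chi$ on $S_n$, the formal substitution $p_j \mapsto t_j$ identifies $\sum_{\nu \vdash n}\psi^\lambda_{S_n}(\nu)\,\bt^{c(\nu)}/|Z_\nu|$ with $\ch(\psi^\lambda_{S_n})$. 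Hence the entire left-hand side equals $\sum_{n \ge 0}\sum_{\lambda \in \OP(n)}\ch(\psi^\lambda_{S_n})$, read as a formal power series in the $t_j$ via $p_j \mapsto t_j$.

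Next I would invoke the standard plethystic formula for the characteristic of a higher Lie character: if $\lambda$ has $b_j$ parts equal to $j$, then $\ch(\psi^\lambda_{S_n}) = \prod_j h_{b_j}[L_j]$, where $h_{b_j}[\cdot]$ denotes plethysm with the complete homogeneous symmetric function. This follows from the two-step induction $C_j \wr S_{b_j} \le S_j \wr S_{b_j} \le S_n$ together with the facts that $\ch$ carries induction to a product and wreath-induction to plethysm. Summing over $\lambda \in \OP(n)$ and over all $n$ then factors as a product over the independent choices of $b_j \ge 0$ for each odd $j$, giving $\prod_{j \text{ odd}} H[L_j]$ with $H := \sum_{m \ge 0} h_m$ (using that plethysm in the outer argument is additive, so $\sum_{b \ge 0} h_b[L_j] = H[L_j]$).

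To evaluate this product I would pass to logarithms, using $H = \exp\!\big(\sum_{k \ge 1}\tfrac{1}{k}p_k\big)$ and the plethystic rule $p_k[L_j] = \frac{1}{j}\sum_{d \mid j}\mu(d)\,p_{dk}^{j/d}$, which turns $\log \prod_{j \text{ odd}} H[L_j]$ into the triple sum $\sum_{j \text{ odd}}\sum_{k \ge 1}\frac{1}{jk}\sum_{d \mid j}\mu(d)\,p_{dk}^{j/d}$. After the change of variables $j = de$ and $n = dk$ (with $d,e$ both odd), this becomes $\sum_{n \ge 1}\sum_{e \text{ odd}}\frac{p_n^{\,e}}{en}\big(\sum_{d \mid n,\ d \text{ odd}}\mu(d)\big)$.

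The decisive step is the inner arithmetic sum. Writing $n = 2^a m$ with $m$ odd, the odd divisors of $n$ are exactly the divisors of $m$, so $\sum_{d \mid n,\ d \text{ odd}}\mu(d) = \sum_{d \mid m}\mu(d) = [m = 1]$; that is, it equals $1$ when $n$ is a power of $2$ and $0$ otherwise. This collapses the sum over $n$ to $n = 2^p$, yielding $\sum_{p \ge 0}\sum_{e \text{ odd}}\frac{1}{2^p e}p_{2^p}^{\,e}$. Matching this against the logarithm of the target, namely $\sum_{p \ge 0}\frac{1}{2^{p+1}}\big(\log(1+t_{2^p}) - \log(1-t_{2^p})\big) = \sum_{p \ge 0}\sum_{e \text{ odd}}\frac{1}{2^p e}t_{2^p}^{\,e}$ (using $\log(1+x)-\log(1-x)=2\sum_{e \text{ odd}}x^e/e$), completes the proof under $p_j \mapsto t_j$. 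The main obstacle is less the computation than assembling the two structural inputs correctly — the plethystic identity for $\ch(\psi^\lambda)$ and the reduction of $H[L_j]$ to power sums — after which the Möbius evaluation over odd divisors does all the work.
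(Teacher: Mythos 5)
Your proof is correct, and it takes a genuinely different route from the paper's. The paper first establishes, by a self-contained character-theoretic computation (analyzing cycle structures of elements of the centralizers $G_i \wr S_{a_i}$ and evaluating Möbius sums of roots of unity), the full bivariate generating function of Theorem~\ref{t:summation_4_A} for \emph{all} higher Lie character values, and then obtains Theorem~\ref{t:odd_cycles} by the specialization $s_i := 1$ for odd $i$, $s_i := 0$ for even $i$. You instead bypass Theorem~\ref{t:summation_4_A} entirely: you recognize the left-hand side as $\sum_{n}\sum_{\lambda \in \OP(n)} \ch(\psi^\lambda_{S_n})$ under $p_j \mapsto t_j$, and invoke the classical plethystic description $\ch(\psi^\lambda_{S_n}) = \prod_j h_{m_j}[L_j]$ (Thrall, Gessel--Reutenauer) together with $L_j = \frac{1}{j}\sum_{d \mid j}\mu(d)p_d^{j/d}$, $H = \exp\bigl(\sum_k p_k/k\bigr)$, and $p_k[p_m] = p_{km}$. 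After that, your change of variables $j = de$, $n = dk$ and the evaluation $\sum_{d \mid n,\, d \text{ odd}}\mu(d) = [n = 2^p]$ coincide exactly with the paper's endgame (which forces $j = 2^p$ and uses $\sum_{d \text{ odd}} x^d/d = \frac{1}{2}\log\frac{1+x}{1-x}$). What each approach buys: yours is shorter and leans on established symmetric-function technology, essentially deducing the theorem from Theorem~\ref{t:GR22}-type structure theory rather than from scratch; the paper's machinery is longer but self-contained and, crucially, extends with minor modifications to the signed setting (Theorem~\ref{t:summation_4_A_signed}), which is needed for Theorem~\ref{t:even_cycles} — in your framework that companion result would require a separate twisted plethystic identity rather than falling out of the same computation. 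Two cosmetic points: your multiplicities $b_j$ for $\lambda$ clash with the paper's use of $b_j$ for $\nu$, and the two-step induction should read $G_j \wr S_{m_j} \le S_j \wr S_{m_j} \le S_{jm_j}$ followed by induction from the Young subgroup $\prod_j S_{jm_j}$ to $S_n$; neither affects correctness.
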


\begin{theorem}\label{t:even_cycles}
    \[
        \sum_{n \ge 0}  
        \sum_{\lambda \in \EP(n)} 
        \sum_{\nu \vdash n} 
        \sign(\nu) \psi_{S_n}^\lambda(\nu) \,
        \frac{\bt^{c(\nu)}}{|Z_\nu|} 
        = \prod_{p \ge 0} 
        \left( \frac{1 + t_{2^p}}{1 - t_{2^p}} \right)^{1/2^{p+1}} .
    \]
\end{theorem}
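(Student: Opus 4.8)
The plan is to reduce the left-hand side to a single identity in the ring $\Lambda$ of symmetric functions and then evaluate it by the exponential formula and Möbius inversion. Recall that for a class function $\chi$ on $S_n$ the Frobenius characteristic is $\ch(\chi)=\sum_{\nu\vdash n}\chi(\nu)\,p_\nu/|Z_\nu|$, that tensoring with $\sign$ corresponds to the involution $\omega$, and that $\omega(p_k)=(-1)^{k-1}p_k$. Writing $\phi\colon\Lambda\to\CC[[\bt]]$ for the specialization $p_k\mapsto t_k$ and $\phi^-:=\phi\circ\omega$, so that $\phi^-(p_k)=(-1)^{k-1}t_k$, we get $\phi^-(p_\nu)=\sign(\nu)\,\bt^{c(\nu)}$. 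Hence the left-hand side of the theorem equals $\phi^-(G)$, where
\[
G:=\sum_{n\ge0}\ \sum_{\lambda\in\EP(n)}\ch(\psi_{S_n}^\lambda).
\]

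To compute $G$ I would use the plethystic expression $\ch(\psi_{S_n}^\lambda)=\prod_j h_{m_j}[\ell_j]$, where $m_j$ is the number of parts equal to $j$ in $\lambda$ and $\ell_j=\tfrac1j\sum_{d\mid j}\mu(d)\,p_d^{j/d}$ is the Lie characteristic (so $\ell_1=p_1$). A partition in $\EP(n)$ consists of even parts with arbitrary multiplicities together with at most one part equal to $1$, so summing over all such $\lambda$ factors the generating function as
\[
G=(1+\ell_1)\prod_{j\ \text{even}}\Bigl(\sum_{m\ge0}h_m[\ell_j]\Bigr).
\]
Taking logarithms and using $\sum_{m\ge0}h_m=\exp\bigl(\sum_{k\ge1}p_k/k\bigr)$ yields $\log\bigl(\sum_{m\ge0}h_m[\ell_j]\bigr)=\tfrac1j\sum_{d\mid j}\mu(d)\sum_{k\ge1}\tfrac1k\,p_{dk}^{j/d}$.

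The key simplification appears upon applying $\phi^-$: for even $j$ a short parity check gives $\phi^-\bigl(p_{dk}^{j/d}\bigr)=(-1)^{j/d}\,t_{dk}^{j/d}$, since the exponents $(dk-1)(j/d)$ and $j/d$ differ by $jk-2(j/d)$, which is even because $j$ is. Substituting this, I would sum the geometric-type series over the allowed exponents $e=j/d$: with $x=t_{dk}$ one gets $\sum_e(-1)^e x^e/e=-\log(1+x)$ when $d$ is even (all $e$ occur) and $-\tfrac12\log(1-x^2)$ when $d$ is odd (only even $e$ occur). Reindexing the remaining sums by $m=dk$ and using $\sum_{d\mid m,\ d\ \text{odd}}\mu(d)=[\,m\ \text{is a power of }2\,]$ collapses every contribution onto the indeterminates $t_{2^p}$, and the single $\log(1+t_1)$ coming from the factor $1+\ell_1$ supplies exactly the correction needed at $m=1$. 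Collecting the coefficients of $\log(1+t_{2^p})$ and $\log(1-t_{2^p})$, which come out to $+1/2^{p+1}$ and $-1/2^{p+1}$ respectively, gives $\phi^-(\log G)=\sum_{p\ge0}\tfrac1{2^{p+1}}\log\frac{1+t_{2^p}}{1-t_{2^p}}$, and exponentiating produces the claimed product.

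The main obstacle is the sign and Möbius bookkeeping in this last step. One must carefully track the parity constraints on $d$ and $e=j/d$ forced by ``$j$ even'', split $d$ according to its odd part and a possible factor $2$ (so that $\mu(d)\ne0$), and confirm that the contribution at $m=1$ — where the lone fixed point enters through $1+\ell_1$ — produces the coefficient $1/2$ rather than being miscounted. Once the sign rule $\phi^-(p_{dk}^{j/d})=(-1)^{j/d}t_{dk}^{j/d}$ is established, the remaining manipulations run parallel to the sign-free computation behind Theorem~\ref{t:odd_cycles}, with the extra signs conspiring to trade the odd-part restriction there for the even-part restriction here while yielding the very same right-hand side.
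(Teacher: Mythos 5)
Your proposal is correct, and it takes a genuinely different route from the paper. The paper deduces Theorem~\ref{t:even_cycles} by substituting into its signed generating function (Theorem~\ref{t:summation_4_A_signed}, proved via centralizer/wreath-product computations that are only outlined there): it sets $s_i=1$ for even $i$ and $s_i=0$ for odd $i$ to handle even $n$, and then runs a second computation keeping $s_1$ as an indeterminate and extracting the monomials linear in $s_1$ to handle the extra fixed point when $n$ is odd. You instead work entirely in $\Lambda$: the specialization $\phi^-(p_k)=(-1)^{k-1}t_k$ absorbs the factor $\sign(\nu)$ directly (so no signed analogue of the generating function is needed), and the classical plethystic formula $\ch(\psi_{S_n}^\lambda)=\prod_j h_{m_j}[\ell_j]$ with $\ell_j=\tfrac1j\sum_{d\mid j}\mu(d)\,p_d^{j/d}$ replaces the paper's Theorem~\ref{t:summation_4_A_signed} wholesale; your factor $(1+\ell_1)$ handles both parities of $n$ at once, more cleanly than the paper's $s_1$-coefficient trick. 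I checked the delicate points: the parity computation $\phi^-\bigl(p_{dk}^{j/d}\bigr)=(-1)^{j/d}t_{dk}^{j/d}$ for even $j$ is right (the exponent discrepancy $jk-2j/d$ is even); the split into $d$ even (all $e$, giving $-\log(1+x)$) versus $d$ odd (even $e$ only, giving $-\tfrac12\log(1-x^2)$) is right; the Möbius sums over the odd and even divisors of $m=dk$ collapse onto $m=2^p$ exactly as you say; and at $m=1$ the contributions $\log(1+t_1)-\tfrac12\log(1-t_1^2)$ combine to the coefficient $1/2^{0+1}$, matching the $p\ge1$ pattern $\tfrac1{2^p}-\tfrac1{2^{p+1}}=\tfrac1{2^{p+1}}$. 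What each approach buys: yours is self-contained modulo standard symmetric function theory (Thrall/Gessel--Reutenauer~\cite{GR}) and avoids the unproved signed lemmas of Section~\ref{sec:gf_hLc_A_signed}, while the paper's route produces the more general two-family identity in $\bs$ and $\bt$ (Theorems~\ref{t:summation_4_A} and~\ref{t:summation_4_A_signed}), which is of independent interest and is reused elsewhere (e.g.\ in Section~\ref{sec:induction}). If you write this up in full, the one step deserving an explicit citation rather than a wave is the plethystic formula for $\ch(\psi_{S_n}^\lambda)$, since the paper never states it.
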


In particular, setting $t_{2^p} = 0$ for all $p \ge 1$, we obtain 
the following result, to be compared with [OEIS A000246].

\begin{corollary}\label{cor:double}
    For any integer $n \ge 2$,
    \[
        \sum_{\lambda \in \OP(n)} 
        \dim \psi_{S_n}^\lambda 
        = \sum_{\lambda \in \EP(n)} 
        \dim \psi_{S_n}^\lambda 
        = \begin{cases}
            (n-1)!!^2, &\text{if $n$ is even;} \\
            n!! \cdot (n-2)!!, &\text{ if $n$ is odd.}
        \end{cases}    
    \]
\end{corollary}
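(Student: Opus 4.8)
The plan is to derive Corollary~\ref{cor:double} from Theorems~\ref{t:odd_cycles} and~\ref{t:even_cycles} by specializing the indeterminates so that only the identity cycle type survives on the left-hand side. Concretely, in both theorems I would set $t_j = 0$ for every $j \neq 1$ (in particular $t_{2^p} = 0$ for all $p \ge 1$) and rename $t_1 =: u$. Since $\bt^{c(\nu)} = \prod_j t_j^{b_j}$ is a monomial, this kills every term except the one indexed by $\nu = (1^n)$, for which $b_1 = n$, $\bt^{c(\nu)} = u^n$, and $|Z_{(1^n)}| = n!$. Because $\psi_{S_n}^\lambda(1^n) = \dim \psi_{S_n}^\lambda$ and $\sign(1^n) = 1$, the left-hand sides of the two theorems collapse to the single exponential generating functions
\[
    \sum_{n \ge 0} \Big( \sum_{\lambda \in \OP(n)} \dim \psi_{S_n}^\lambda \Big) \frac{u^n}{n!}
    \qquad\text{and}\qquad
    \sum_{n \ge 0} \Big( \sum_{\lambda \in \EP(n)} \dim \psi_{S_n}^\lambda \Big) \frac{u^n}{n!},
\]
respectively.

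On the right-hand side the same specialization leaves only the $p = 0$ factor, so both generating functions equal $\left( \tfrac{1+u}{1-u} \right)^{1/2}$. This already yields the asserted equality of the two dimension sums, for each $n$, by comparing coefficients of $u^n$.

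It then remains to extract these coefficients in closed form. I would rewrite
\[
    \left( \frac{1+u}{1-u} \right)^{1/2} = \frac{1+u}{\sqrt{1-u^2}} = (1+u)(1-u^2)^{-1/2}
\]
and apply the binomial series $(1-u^2)^{-1/2} = \sum_{k \ge 0} \binom{2k}{k} \big( u^2/4 \big)^k$, whose coefficient of $u^{2k}$ is $c_k := \binom{2k}{k}/4^k = (2k-1)!!/(2k)!!$. Multiplying by $(1+u)$ shows that both $[u^{2m}]$ and $[u^{2m+1}]$ of the generating function equal $c_m$. Converting to double factorials through $(2m)! = 2^m m! \,(2m-1)!!$ and $(2m+1)! = (2m+1)!! \,2^m m!$ gives $n! \cdot c_m = (2m-1)!!^2$ when $n = 2m$ and $n! \cdot c_m = (2m+1)!!\,(2m-1)!!$ when $n = 2m+1$, which are exactly $(n-1)!!^2$ and $n!!\,(n-2)!!$.

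The argument involves no real obstacle: the only points needing care are to confirm that setting the remaining indeterminates to zero is a legitimate operation in $\CC[[\bt]]$ that isolates the $\nu = (1^n)$ contribution, and that the double-factorial bookkeeping in the final step is carried out consistently across the even and odd cases. Both are routine.
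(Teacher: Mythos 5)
Your proposal is correct and follows essentially the same route as the paper, which obtains Corollary~\ref{cor:double} precisely by specializing Theorems~\ref{t:odd_cycles} and~\ref{t:even_cycles} so that only $t_1$ survives, reducing both generating functions to $\left(\frac{1+t_1}{1-t_1}\right)^{1/2}$ and isolating the $\nu=(1^n)$ term, whose coefficient gives the dimensions since $\psi_{S_n}^\lambda(1^n)=\dim\psi_{S_n}^\lambda$ and $\sign(1^n)=1$. Your binomial-series extraction of $[u^n]\,(1+u)(1-u^2)^{-1/2}$ together with the identities $(2m)!=(2m)!!\,(2m-1)!!$ and $(2m+1)!=(2m+1)!!\,(2m)!!$ correctly carries out the coefficient computation that the paper leaves implicit.
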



The group of signed permutations $B_n$ can be viewed as the centralizer, in $S_{2n}$, of a fixed-point-free involution (a permutation of cycle type $(2, \ldots, 2)$). 
Noting that its index $|S_{2n}|/|B_n| = (2n-1)!!$, Corollary~\ref{cor:double} suggests that the sum of higher Lie characters of $S_{2n}$ over the partitions in $\OP(2n)$ is induced from a character of $B_n$. We prove that this is, indeed, the case (with an analogue for $S_{2n+1}$). 
Specifically, denote
\[
    \eta_{B_n}
    := \sum_{\lambda \vdash n} \psi_{B_n}^{(\lambda, \varnothing)} ,
\]
the sum of (type $B$) higher Lie characters of $B_n$ corresponding to conjugacy classes with positive cycles only (for a definition see Subsection~\ref{subsec:def_hLc_B}).  

\begin{theorem}\label{t:induced_to_even_main}
    For any integer $n \ge 0$,
    \[
        \sum_{\lambda \in \OP(n)}         \psi_{S_n}^\lambda
        = \eta_{B_{\lfloor n/2 \rfloor}} \up_{B_{\lfloor n/2 \rfloor}}^{S_{n}}.
    \]
\end{theorem}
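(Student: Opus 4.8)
The plan is to prove Theorem~\ref{t:induced_to_even_main} by comparing the two sides as virtual characters via their values on conjugacy classes, using the generating-function machinery already established. Since a character of $S_n$ is determined by its values on conjugacy classes (indexed by partitions $\nu \vdash n$), it suffices to show that for every $\nu \vdash n$,
\[
    \left(\sum_{\lambda \in \OP(n)} \psi_{S_n}^\lambda\right)(\nu)
    = \left(\eta_{B_{\lfloor n/2 \rfloor}} \up_{B_{\lfloor n/2 \rfloor}}^{S_{n}}\right)(\nu).
\]
The left-hand values are precisely the coefficients packaged in Theorem~\ref{t:odd_cycles}: extracting $\bt^{c(\nu)}$ from the product $\prod_{p \ge 0}\left(\frac{1+t_{2^p}}{1-t_{2^p}}\right)^{1/2^{p+1}}$ and multiplying by $|Z_\nu|$ gives the value at $\nu$. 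So the real content is to obtain a matching generating function for the right-hand side.

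\emph{First} I would compute the induced character values explicitly. By the Frobenius formula for induction, $\left(\eta_{B_m}\up^{S_n}_{B_m}\right)(\nu)$ is a sum of values of $\eta_{B_m}$ over those $B_m$-classes that fuse into the $S_n$-class $\nu$, weighted by the standard centralizer-ratio factors. Because $B_m$ embeds as the centralizer of a fixed-point-free involution in $S_{2m}$ (and similarly with one extra fixed point when $n$ is odd), the signed-permutation cycle types that map to a given $\nu$ are controlled by how the positive and negative cycles of a type-$B$ element produce cycles of $S_n$: a positive $k$-cycle yields two $k$-cycles and a negative $k$-cycle yields one $2k$-cycle. \emph{Then} I would assemble a type-$B$ analogue of Theorem~\ref{t:odd_cycles}, a generating function $\sum_{m \ge 0}\sum_{\mu\vdash m}\psi_{B_m}^{(\mu,\varnothing)}(\cdot)\,\frac{\bt^{c(\cdot)}}{|Z^B|}$, and apply the induction/restriction formalism for the chain $B_m \hookrightarrow S_{2m}$ to translate it into the $\bt$-indeterminates of $S_n$.

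The key computational identity I expect to need is a type-$B$ factorization of the higher Lie character generating function: over conjugacy classes of $B_m$ with positive cycles only, the sum $\eta_{B_m}$ should have a product formula whose factors mirror $\left(\frac{1+t}{1-t}\right)$. The decisive step is to verify that, after substituting the fusion rules (each type-$B$ positive cycle of length $k$ contributing a factor that splits a pair of $k$-cycles in $S_n$, each negative cycle folding two $k$-steps into one $2k$-cycle) and dividing by the index $(2m-1)!!$, the resulting series collapses exactly onto $\prod_{p \ge 0}\left(\frac{1+t_{2^p}}{1-t_{2^p}}\right)^{1/2^{p+1}}$. The bookkeeping of powers of $2$ — tracking how the nested squaring structure in the exponents $1/2^{p+1}$ arises from iterated doubling of even cycle lengths — is where I expect the main obstacle to lie, together with correctly handling the parity split between $S_{2n}$ and $S_{2n+1}$ (the single fixed point in the odd case forces a separate $t_1$-factor bookkeeping).

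Once the two generating functions are shown to agree coefficient-by-coefficient, the equality of class-function values for every $\nu$ follows, and hence the two virtual characters coincide; since both sides are genuine characters, this establishes the theorem. As a sanity check, setting $t_{2^p}=0$ for $p \ge 1$ should recover Corollary~\ref{cor:double} via the dimension count $|S_n|/|B_{\lfloor n/2\rfloor}|$, confirming that the index $(2n-1)!!$ is consistent with $\dim\eta_{B_n}\up^{S_{2n}}$.
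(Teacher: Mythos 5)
Your proposal is correct and follows essentially the same route as the paper: the paper also proves the identity by comparing class-function generating functions, using its Lemma~\ref{t:induced_character} (the induction formalism you describe) together with the type-$B$ generating function of \cite[Theorem 4.3]{AHR} specialized to $s_{i,+}=1$, $s_{i,-}=0$, and the same fusion substitutions $t_{j,+}\mapsto t_j^2$, $t_{j,-}\mapsto t_{2j}$, collapsing via M\"obius sums onto the product $\prod_{p \ge 0}\left(\frac{1+t_{2^p}}{1-t_{2^p}}\right)^{1/2^{p+1}}$ of Theorem~\ref{t:odd_cycles}. The steps you flag as the main obstacles (the power-of-$2$ bookkeeping and the odd-case $t_1$-factor) are exactly the computations the paper carries out, using $K_{+,+}(e)=\mu(2e)+2\mu(e)$ and $K_{+,-}(e)=-\mu(2e)$.
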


\noindent 
See Theorem~\ref{t:induced_to_even} below.


\medskip

The rest of the paper is organized as follows.
Higher Lie characters are defined in Subsection~\ref{subsec:def_hLc_A},
and the equivalence of Theorems~\ref{t:equid} and~\ref{t:OP-EP} is explained in Subsection~\ref{subsec:quasi}. 
In Section~\ref{sec:gf_hLc_A} we 
compute a generating function (Theorem~\ref{t:summation_4_A}) for the values of the higher Lie characters of $S_n$. 
In Section~\ref{sec:gf_hLc_A_signed} we state a signed analogue, Theorem~\ref{t:summation_4_A_signed}, and outline its proof.
In Section~\ref{sec:proofs} we deduce Theorem~\ref{t:odd_cycles} (from Theorem~\ref{t:summation_4_A}) and Theorem~\ref{t:even_cycles} (from Theorem~\ref{t:summation_4_A_signed}), thus proving Theorem~\ref{t:OP-EP}.
In Section~\ref{sec:roots} we study related character identities for odd root enumerators.
Finally, in Section~\ref{sec:induction} we 
prove Theorem~\ref{t:induced_to_even_main}.

\bigskip

\noindent
{\bf Acknowledgements.} The authors thank Doron Zeilberger for stimulating discussions,   
and Stoyan Dimitrov and Sergi Elizalde for useful comments and references.

\section{Preliminaries}\label{sec:prel}

In this section we define higher Lie characters (in $S_n$), and use basic properties of quasisymmetric functions to explain the equivalence of Theorems~\ref{t:equid} and~\ref{t:OP-EP}.

\subsection{Higher Lie characters}
\label{subsec:def_hLc_A}

In this subsection we define a higher Lie character for any element (in fact, for any conjugacy class) in the symmetric group $S_n$. 
Let us first recall the well-known description of the centralizer of an arbitrary element of $S_n$.

We write $\lambda\vdash n$ to denote that $\lambda$ is a partition of a positive integer $n$.

\begin{lemma}\label{t:centralizer_in_Sn}
    {\rm (Centralizers in $S_n$)}
%
        Let $x \in S_n$ be an element of cycle type $\lambda\vdash n$ where, for each $i \ge 1$, the partition $\lambda$ has $a_{i}$ parts of size $i$. 
        Write 
        \[
            x = \prod_{i \ge 1} x_{i}\,, 
        \]
        where $x_{i}$ has $a_{i}$ cycles of length $i$ $(i \ge 1)$; of course, only finitely many factors here are nontrivial.
        Then the centralizer of $x$ in $S_n$,    $Z_x = Z_{S_n}(x)$, satisfies
        \[
            Z_{S_n}(x) = \bigtimes_{i \ge 1} Z_{S_{ia_{i}}}(x_{i}) 
            \cong \bigtimes_{i \ge 1} G_{i} \wr S_{a_{i}} ,
        \]
        where $G_{i}$, isomorphic to the cyclic group of order $i$, is the centralizer in $S_i$ of a cycle of length $i$.
        By convention, $G_i \wr S_0$ is the trivial group while $G_i \wr S_1 \cong G_i$. 
%
%
\end{lemma}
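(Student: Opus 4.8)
The plan is to prove the centralizer structure in two independent steps: first factor the centralizer according to cycle lengths, then identify each factor as a wreath product.

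**The first step** is to show that $Z_{S_n}(x) = \bigtimes_{i \ge 1} Z_{S_{ia_i}}(x_i)$. The key observation is that conjugation preserves cycle type: if $g \in S_n$ commutes with $x$, then $g$ permutes the cycles of $x$, and since conjugation by $g$ cannot change the length of a cycle, $g$ must permute cycles of each fixed length $i$ among themselves. Consequently $g$ preserves the set $\Omega_i$ of points lying in $i$-cycles of $x$ (the support of $x_i$), so $g$ restricts to a permutation of each $\Omega_i$. Writing $g = \prod_i g_i$ with $g_i$ supported on $\Omega_i$, one checks that $g$ commutes with $x$ if and only if each $g_i$ commutes with $x_i$ on its block $\Omega_i$ of size $i a_i$. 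This gives the direct product decomposition.

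**The second step** is to identify $Z_{S_{ia_i}}(x_i) \cong G_i \wr S_{a_i}$, where $x_i$ is a product of $a_i$ disjoint $i$-cycles. Here I would argue directly. An element $g$ centralizing $x_i$ permutes the $a_i$ cycles of $x_i$ (inducing a permutation $\sigma \in S_{a_i}$), and on each individual cycle it acts by an element commuting with an $i$-cycle. The centralizer in $S_i$ of a single $i$-cycle is exactly the cyclic group $\langle c \rangle \cong G_i$ generated by that cycle, since a permutation commuting with an $i$-cycle is determined by where it sends one point and must then follow the cyclic structure. I would make the map $g \mapsto (\text{cyclic shifts}; \sigma) \in G_i^{a_i} \rtimes S_{a_i}$ explicit, verify it is a homomorphism respecting the wreath product multiplication, and check injectivity (an element fixing the base-group data and inducing the trivial permutation of cycles must be the identity) and surjectivity (any prescribed shifts together with any reindexing of the cycles is realized by some centralizing permutation). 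A clean order count, $|G_i \wr S_{a_i}| = i^{a_i} a_i! = |Z_{S_{ia_i}}(x_i)|$, confirms the isomorphism and matches the centralizer size $|Z_\nu|$ recorded in the introduction.

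I do not anticipate a genuine obstacle, as this is a classical structural fact; the main care needed is bookkeeping. The delicate point is verifying that the isomorphism $Z_{S_{ia_i}}(x_i) \cong G_i \wr S_{a_i}$ respects the semidirect-product multiplication rather than being a mere bijection: when $g$ both shifts within cycles and permutes the cycles, one must track how the shift data transforms under the permutation $\sigma$, which is precisely the twisting in the wreath product. I would handle this by fixing, once and for all, a labeling of the points in each $i$-cycle compatible with the action of $c$, so that the conjugation action on the base group $G_i^{a_i}$ becomes the standard coordinate permutation by $S_{a_i}$. The stated conventions $G_i \wr S_0 = \{1\}$ and $G_i \wr S_1 \cong G_i$ simply cover the cases $a_i = 0$ and $a_i = 1$, which require no separate argument.
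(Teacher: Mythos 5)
Your proof is correct; note that the paper itself states this lemma as a classical fact and offers no proof at all, so there is nothing to diverge from. Your two-step argument (blockwise decomposition by cycle length, then identification of each block centralizer with $G_i \wr S_{a_i}$ via a fixed labeling of the points in each cycle) is the standard one, and your labeled-array device for handling the wreath-product twist is exactly the coordinatization $p_{s,t}$ that the paper later sets up in the proof of Lemma~\ref{t:cycles_from_GwrS_to_A}.
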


\begin{remark}
    We use $G_i$, rather than $C_i$ or $\ZZ_i$, to denote the cyclic group of order $n$,
    since $C$ and $Z$ are intensively used here for other purposes.
\end{remark}

\begin{definition}\label{def:HLC_A}
    {\rm (Higher Lie characters in $S_n$)}
    Let $x$ be an element of cycle type $\lambda$ in $S_n$, as in Lemma~\ref{t:centralizer_in_Sn}.        
    \begin{enumerate}
    
        \item[(a)]
        For each $i \ge 1$, let $\omega_{i}$ be the linear character on $G_{i} \wr S_{a_{i}}$ 
        which is equal to a primitive irreducible character on the cyclic group $G_{i}$,
        and trivial on the wreathing group $S_{a_{i}}$.
        Let
        \[
            \omega^x 
            := 
            \bigotimes_{i \ge 1} \omega_{i} ,
        \]
        a linear character on $Z_x$. 
        
        \item[(b)]
        Define the corresponding {\em higher Lie character} to be the induced character
        \[
            \psi_{S_n}^x 
            := \omega^x 
            \up_{Z_x}^{S_n}. 
        \]

        \item[(c)]
        It is easy to see that $\psi_{S_n}^x$ depends only on the conjugacy class $C$ (equivalently, the cycle type $\lambda$) of $x$, and can therefore be denoted $\psi_{S_n}^C$ or $\psi_{S_n}^\lambda$.
        
    \end{enumerate}
\end{definition}

\subsection{Quasisymmetric functions and descents}\label{subsec:quasi} 

The {\em descent set} of a permutation $\pi = [\pi_1, \ldots, \pi_n]$ in the symmetric group $S_n$ on $n$ letters is
\[
    \Des(\pi) := \{1 \le i \le n-1 \,:\, \pi_i > \pi_{i+1} \}
    \quad \subseteq [n-1],
\]
where $[m]:=\{1,2,\ldots,m\}$. 
	
\begin{defn}\label{def:fundamental_QSF}
    For each subset $D \subseteq [n-1]$ define the {\em fundamental quasisymmetric function}
    \[
	\f_{n,D}({\bf x}) := \sum_{\substack{i_1\le i_2 \le \ldots \le i_n \\ {i_j < i_{j+1} \text{ if } j \in D}}} 
	x_{i_1} x_{i_2} \cdots x_{i_n}.
    \]
\end{defn}

Given any subset $A\subseteq S_n$, define the quasisymmetric function
\[
    \Q(A) := \sum_{\pi \in A} \f_{n,\Des(\pi)}.
\]
%
%
%
%
%
%
%
%
%
In their seminal paper~\cite{GR}, Gessel and Reutenauer prove the following. 


\begin{theorem}\label{t:GR22}\cite[Theorem 3.6]
{GR}
    For every partition $\lambda \vdash n$, let  $C_\lambda$ be the conjugacy class of permutations in $S_n$ of cycle type $\lambda$. Then 
    \[
	\Q(\C_\lambda) = 
	\ch \left( \psi^\lambda_{S_n} \right),
    \]
    where $\ch$ is the Frobenius characteristic map and $\psi^\lambda$ is the higher Lie character 
    defined in Subsection~\ref{subsec:def_hLc_A}.
\end{theorem}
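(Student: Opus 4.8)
The plan is to prove that both sides of the identity equal one and the same symmetric function, the plethystic product
\[
    \prod_{i \ge 1} h_{a_i}[\ell_i],
\]
where $a_i$ denotes the number of parts of size $i$ in $\lambda$, the function $h_a$ is the complete homogeneous symmetric function, $[\,\cdot\,]$ is plethysm, and $\ell_i := \tfrac{1}{i} \sum_{d \mid i} \mu(d)\, p_d^{i/d}$ is the Frobenius characteristic of the classical Lie character $\mathrm{Lie}_i$ of $S_i$. The right-hand side $\ch(\psi^\lambda_{S_n})$ is evaluated by representation theory, the left-hand side $\Q(C_\lambda)$ by a combinatorial bijection, and the theorem drops out once the two evaluations are seen to coincide. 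It is worth noting at the outset that $\Q(C_\lambda)$ is a priori only quasisymmetric; the agreement with $\ch(\psi^\lambda_{S_n})$ will in particular re-prove that it is symmetric.

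For the right-hand side I would first treat a single cycle length $i$ in isolation. By Lemma~\ref{t:centralizer_in_Sn} the relevant subgroup is $G_i \wr S_{a_i}$ with $G_i \cong C_i$, and $\omega_i$ restricts to a faithful linear character on each cyclic factor and is trivial on the wreathing group $S_{a_i}$. Since $\mathrm{Lie}_i$ is by definition the induction to $S_i$ of a faithful linear character of $G_i$, transitivity of induction together with the standard dictionary translating induction from a wreath product (with trivial wreathing action) into plethysm gives $\ch\!\left(\omega_i \up_{G_i \wr S_{a_i}}^{S_{i a_i}}\right) = h_{a_i}[\ell_i]$, using the Lie-character computation $\ch(\mathrm{Lie}_i) = \ell_i$. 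The full centralizer $Z_x \cong \prod_{i \ge 1} G_i \wr S_{a_i}$ lies in the Young subgroup $\prod_{i \ge 1} S_{i a_i}$, so by transitivity of induction and the fact that $\ch$ turns induction from a Young subgroup into a product, $\ch(\psi^\lambda_{S_n}) = \prod_{i \ge 1} h_{a_i}[\ell_i]$. This half is essentially bookkeeping with known formulas.

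For the left-hand side the goal is the matching identity $\Q(C_\lambda) = \prod_{i \ge 1} h_{a_i}[\ell_i]$, and this is the combinatorial heart of the argument. Here I would encode each $\pi \in C_\lambda$ by its disjoint cycles written as circular words, and use the fact that a single cycle of length $i$ corresponds to a primitive (aperiodic) necklace of length $i$; the generating contribution of such necklaces is exactly $\ell_i$, which is where aperiodicity and Möbius inversion enter, in analogy with Lyndon words. Gathering the $a_i$ interchangeable cycles of length $i$ produces the symmetric-power contribution $h_{a_i}[\ell_i]$. The crux, and the step I expect to be the main obstacle, is to relate $\Des(\pi)$ computed from the one-line notation to the descent structure of a standardized word read off from the cycles, in such a way that $\sum_{\pi \in C_\lambda} \f_{n,\Des(\pi)}$ reproduces the monomial expansion of the plethystic product; this descent-tracking is handled through the theory of $P$-partitions and standardization of words. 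Comparing the two evaluations of $\prod_{i \ge 1} h_{a_i}[\ell_i]$ then completes the proof.
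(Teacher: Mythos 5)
This statement is quoted in the paper from Gessel--Reutenauer \cite[Theorem 3.6]{GR} without proof, so there is no internal argument to compare against; what you have done, in effect, is reconstruct the original proof from the cited source. Your two-sided evaluation against the plethystic product $\prod_{i\ge 1} h_{a_i}[\ell_i]$ is exactly the architecture of \cite{GR}: their Section 3 identifies $\ch(\psi^\lambda_{S_n})$ with this product, and their Section 2 (the bijective heart, via Lyndon factorization of words into multisets of primitive necklaces and the compatible-function/$P$-partition lemma) identifies $\Q(C_\lambda)$ with it. Your right-hand half is sound and genuinely routine: inducing in stages through $S_i \wr S_{a_i}$, the character $\omega_i$ (primitive on $G_i$, trivial on the wreathing $S_{a_i}$) first induces to the canonical wreath extension of $\mathrm{Lie}_i^{\otimes a_i}$ with trivial top character, and the standard wreath-to-plethysm dictionary together with the classical fact $\ch(\mathrm{Lie}_i)=\ell_i=\tfrac1i\sum_{d\mid i}\mu(d)p_d^{i/d}$ (Klyachko; induction of a faithful linear character of $G_i$ is independent of which primitive character is chosen) gives $\prod_i h_{a_i}[\ell_i]$.

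The one caveat is completeness rather than correctness: the left-hand half, which you rightly flag as the main obstacle, is asserted by naming its mechanism rather than carried out. A full proof needs two concrete lemmas: (i) the bijection between words of length $n$ and pairs $(\pi,f)$ with $f$ a $\pi$-compatible labeling, obtained by ranking the positions of a multiset of primitive necklaces via their infinite periodic readings (primitivity is what makes the ranking well defined within a necklace, with a fixed tie-breaking rule across equal necklaces); and (ii) the $P$-partition identity $\sum_{f\ \pi\text{-compatible}} x_f = \f_{n,\Des(\pi)}$, which is where the descent set of the one-line notation enters. Since these two lemmas \emph{are} the content of the theorem being proved, deferring them means your proposal is an accurate and viable roadmap --- indeed the historically correct one --- but not yet a self-contained proof. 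Nothing in the plan would fail if executed; your closing remark that the argument re-proves symmetry of $\Q(C_\lambda)$ is also correct and is how Gessel--Reutenauer present it.
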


The remarks preceding~\cite[Theorem 4.1]{GR} imply the following variant.

\begin{corollary}
     For every partition $\lambda \vdash n$,
     \[
        \ch \left( \sign \otimes \psi^\lambda_{S_n} \right)
        = \sum_{\pi \in C_\lambda} \f_{n,[n-1] \setminus \Des(\pi)}.
     \]
\end{corollary}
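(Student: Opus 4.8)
The plan is to deduce this Corollary from the Gessel–Reutenauer Theorem~\ref{t:GR22} by exploiting the well-known interaction between tensoring with the sign character and the involutive automorphism $\omega$ on symmetric functions, together with the complementation behavior of fundamental quasisymmetric functions. Recall that under the Frobenius characteristic map, tensoring a character $\chi$ of $S_n$ with $\sign$ corresponds to applying the standard involution $\omega$ to $\ch(\chi)$; that is, $\ch(\sign \otimes \psi^\lambda_{S_n}) = \omega\bigl(\ch(\psi^\lambda_{S_n})\bigr)$. So by Theorem~\ref{t:GR22} we have $\ch(\sign \otimes \psi^\lambda_{S_n}) = \omega\bigl(\Q(C_\lambda)\bigr) = \sum_{\pi \in C_\lambda} \omega(\f_{n,\Des(\pi)})$, and the whole task reduces to computing $\omega(\f_{n,D})$ for the fundamental quasisymmetric functions.

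The key step is the identity $\omega(\f_{n,D}) = \f_{n,[n-1]\setminus D}$. First I would establish this. One clean route is to recall that $\omega$ on quasisymmetric functions (the restriction of the fundamental involution, or equivalently the duality on $\mathrm{QSym}$) sends the fundamental basis element indexed by $D$ to the one indexed by the complement $[n-1]\setminus D$. This can be seen either from the explicit combinatorial definition in Definition~\ref{def:fundamental_QSF} by the standard argument that reverses the order of the variables and negates the weak-versus-strict inequality conditions, or by lifting from symmetric functions: for any set $A \subseteq S_n$ one has the clean complementation statement $\omega\bigl(\Q(A)\bigr) = \sum_{\pi \in A} \f_{n,[n-1]\setminus\Des(\pi)}$, which is precisely the fact alluded to in the remarks preceding~\cite[Theorem 4.1]{GR}. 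Substituting $D = \Des(\pi)$ and summing over $\pi \in C_\lambda$ then yields
\[
    \ch\bigl(\sign \otimes \psi^\lambda_{S_n}\bigr)
    = \omega\bigl(\Q(C_\lambda)\bigr)
    = \sum_{\pi \in C_\lambda} \f_{n,[n-1]\setminus\Des(\pi)},
\]
which is exactly the claimed formula.

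The main obstacle, and the only place requiring genuine care, is justifying the fundamental identity $\omega(\f_{n,D}) = \f_{n,[n-1]\setminus D}$ at the level of generality needed, and in particular confirming that the authors' convention for $\omega$ (the Frobenius-side involution inducing $\sign$) matches the complementation convention on the quasisymmetric side. Since the Corollary is explicitly attributed to the remarks preceding~\cite[Theorem 4.1]{GR}, I expect the cleanest write-up to simply cite that complementation result directly rather than re-derive it: those remarks state that applying $\omega$ to $\Q(A)$ complements all descent sets, so the proof is essentially a one-line application once Theorem~\ref{t:GR22} is in hand. If a self-contained argument is preferred, the combinatorial proof via reversing the variable indexing is routine but must be carried out carefully to track the strict/weak inequality flips correctly.
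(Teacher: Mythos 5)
Your proposal is correct and takes essentially the same route as the paper, which proves this corollary simply by citing the remarks preceding~\cite[Theorem 4.1]{GR} — i.e., applying $\omega$ to the identity of Theorem~\ref{t:GR22} and using the complementation of descent sets in the fundamental expansion. The convention subtlety you flag is real but harmless here: the involution usually called $\omega$ on ${\rm QSym}$ sends $\f_{n,D}$ to $\f_{n,[n-1]\setminus\{n-i \,:\, i \in D\}}$ (complement of the reversal), while plain complementation is the involution often denoted $\psi$; since $\Q(C_\lambda)$ is symmetric (so any map restricting to $\omega$ on ${\rm Sym}$ gives the same value) and $C_\lambda$ is closed under conjugation by the longest element $w_0$, which reverses descent sets, the termwise formula $\omega(\Q(C_\lambda)) = \sum_{\pi \in C_\lambda} \f_{n,[n-1]\setminus\Des(\pi)}$ you use is valid exactly as stated.
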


\begin{corollary} 
    Theorem~\ref{t:equid} and Theorem~\ref{t:OP-EP} are equivalent.
\end{corollary}

\begin{proof}
    Recall from~\cite[Ch.~7]{EC2} that the fundamental quasisymmetric functions 
    $\{\f_{n,D} \mid D \subseteq [n-1]\}$ 
    form a basis of the vector space ${\rm{QSym}}_n$ of quasisymmetric functions in $n$ variables. 
    Theorem~\ref{t:equid} is therefore equivalent to
    \[
        \sum_{\pi \in OC(n)} \f_{n,\Des(\pi)}
        = \sum_{\pi \in EC(n)} \f_{n,[n-1] \setminus \Des(\pi)}.  
    \]
    By Theorem~\ref{t:GR22}, this happens if and only if 
    \begin{align*}
        \sum_{\lambda \in \OP(n)} \psi_{S_n}^\lambda 
        &= \ch^{-1} \left( \sum_{\lambda \in \OP(n)} \Q(\C_\lambda) \right)
        = \ch^{-1} \left( \sum_{\pi \in OC(n)} \f_{n,\Des(\pi)} \right) \\
        &= \ch^{-1} \left( \sum_{\pi \in EC(n)} \f_{n,[n-1] \setminus \Des(\pi)} \right)
        = \sign \otimes \sum_{\lambda \in \EP(n)} \psi_{S_n}^\lambda.
        \qedhere
    \end{align*} 
\end{proof}

\section{A generating function for higher Lie characters}
\label{sec:gf_hLc_A}

In this section we state and prove an explicit generating function (Theorem~\ref{t:summation_4_A}) for the values of all the higher Lie characters of the symmetric group $S_n$. 
This formula is an $S_n$-version of a similar formula, recently proved for the higher Lie characters of the hyperoctahedral group $B_n$~\cite[Theorem 4.3]{AHR}. 
%
The main result is stated in Subsection~\ref{subsec:main_hLc_A}, and proved in the following subsections.


\subsection{Main result}
\label{subsec:main_hLc_A}

Let $\lambda$ and $\nu$ be two partitions of $n$. 
For each integer $i \ge 1$, let $a_{i}$ be the number of parts of size $i$ in the partition $\lambda$.
Similarly, for each integer $j \ge 1$, let $b_{j}$ be the number of parts of size $j$ in the partition $\nu$. 
Thus
\[
    \sum_{i} i a_{i} 
    = \sum_{j} j b_{j} 
    = n.
\]
Let $\bs = (s_{i})_{i \ge 1}$ and $\bt = (t_{j})_{j \ge 1}$ be two countable sets of indeterminates.
Consider the ring $\CC[[\bs,\bt]]$ of formal power series in these indeterminates,
and denote 
$\bs^{c(\lambda)} := \prod_{i} s_{i}^{a_{i}}$ 
and $\bt^{c(\nu)} := \prod_{j} t_{j}^{b_{j}}$.
The main result of this section is the following.

\begin{theorem}\label{t:summation_4_A}
    \begin{align*}
        \sum_{n \ge 0}  
        \sum_{\lambda \vdash n} \sum_{\nu \vdash n} 
        \psi_{S_n}^\lambda(\nu) \,
        \frac{\bs^{c(\lambda)} \bt^{c(\nu)}}{|Z_\nu|} 
        &= \exp \left( \sum_{i} \sum_{j} \sum_{e | \gcd(i,j)}
        \mu(e) \,
        \frac{s_{i}^{j/e} t_{j}^{i/e}}{ij/e} 
        \right) ,
    \end{align*}
    where $\mu(\cdot)$ is the classical M\"obius function.
\end{theorem}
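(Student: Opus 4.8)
The plan is to pass everything to the Frobenius characteristic and exploit the fact that higher Lie characters factor as a plethystic product over part sizes. Two classical ingredients drive the computation. First, the Lie character of $S_i$, namely $\zeta_i\up_{G_i}^{S_i}$ for a primitive linear character $\zeta_i$ of the cyclic group $G_i$, has Frobenius characteristic the Lie symmetric function
\[
    \ell_i \;=\; \frac{1}{i}\sum_{d\mid i}\mu(d)\,p_d^{\,i/d},
\]
and the wreath-product/plethysm correspondence identifies $\ch$ of the induced character $\omega_i\up_{G_i\wr S_{a_i}}^{S_{ia_i}}$ (primitive on each $G_i$, trivial on $S_{a_i}$) with the plethysm $h_{a_i}[\ell_i]$. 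Using the centralizer factorization $Z_x\cong\prod_{i}(G_i\wr S_{a_i})$ of Lemma~\ref{t:centralizer_in_Sn} together with induction in stages, this yields, for $\lambda$ with $a_i$ parts of size $i$,
\[
    \ch\!\left(\psi_{S_n}^\lambda\right)=\prod_{i\ge1} h_{a_i}[\ell_i].
\]

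Second, the power-sum expansion $\ch(f)=\sum_{\nu\vdash n}|Z_\nu|^{-1}f(\nu)\,p_\nu$ of the Frobenius characteristic of a class function $f$ shows that the formal substitution $p_j\mapsto t_j$ turns $\ch(\psi_{S_n}^\lambda)$ into precisely the inner sum $\sum_\nu |Z_\nu|^{-1}\psi_{S_n}^\lambda(\nu)\,\bt^{c(\nu)}$ on the left-hand side. Hence the entire left-hand side equals $\bigl(\sum_n\sum_\lambda \bs^{c(\lambda)}\ch(\psi_{S_n}^\lambda)\bigr)$ evaluated at $p_j = t_j$. Since both the weight $\bs^{c(\lambda)}$ and the product formula for $\ch(\psi_{S_n}^\lambda)$ factor over part sizes, this sum factors as
\[
    \prod_{i\ge1}\Bigl(\sum_{a\ge0}s_i^{\,a}\,h_a[\ell_i]\Bigr)\Big|_{p_j=t_j}.
\]

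I would then apply the exponential identity $\sum_{a\ge0}u^a h_a[g]=\exp\!\bigl(\sum_{k\ge1}\tfrac{u^k}{k}p_k[g]\bigr)$ with $u=s_i$ and $g=\ell_i$. Because plethysm by $p_k$ sends $p_m\mapsto p_{mk}$, one gets $p_k[\ell_i]=\frac1i\sum_{d\mid i}\mu(d)p_{dk}^{\,i/d}$, which after $p_j\mapsto t_j$ becomes $\frac1i\sum_{d\mid i}\mu(d)t_{dk}^{\,i/d}$. Collecting exponents gives
\[
    \exp\Bigl(\sum_i\sum_{k\ge1}\sum_{d\mid i}\frac{\mu(d)}{ik}\,s_i^{\,k}\,t_{dk}^{\,i/d}\Bigr),
\]
and the final reindexing $j:=dk$ finishes the proof: for fixed $i,j$ the pairs $(d,k)$ with $d\mid i$ and $dk=j$ are exactly those with $d\mid\gcd(i,j)$ and $k=j/d$, whence $ik=ij/d$ and the exponent becomes $\sum_i\sum_j\sum_{d\mid\gcd(i,j)}\frac{\mu(d)}{ij/d}\,s_i^{\,j/d}t_j^{\,i/d}$, matching the right-hand side.

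The main obstacle is the careful justification of the product/plethysm formula $\ch(\psi_{S_n}^\lambda)=\prod_i h_{a_i}[\ell_i]$ directly from the induced-character definition, and keeping the plethystic bookkeeping honest: plethysm by $p_k$ must be carried out symbolically in the $p$'s \emph{before} the substitution $p_j\mapsto t_j$, and the concluding reindexing must correctly convert the divisibility constraint $d\mid i$ (with $k$ free) into $d\mid\gcd(i,j)$. Everything else is a formal power series manipulation.
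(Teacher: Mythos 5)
Your proposal is correct, but it takes a genuinely different route from the paper. The paper's proof is self-contained and elementary: it evaluates the induced character directly (Lemma~\ref{t:HLC_values}), analyzes the $S_{ia_i}$-cycle structure of elements of $G_i\wr S_{a_i}$ (Corollary~\ref{t:cycles2_A}), performs the single-cycle M\"obius summation $K_{i,j}(\sigma,e)=\mu(e)$ (Lemma~\ref{t:summation_1_A}), and assembles exponential generating functions in two stages (Lemmas~\ref{t:summation_2_A} and~\ref{t:summation_3_A}) before a final substitution $s\mapsto s_i/i$, $t_j^j\mapsto t_j$. You instead quote the classical plethystic description of higher Lie characters, $\ch(\psi_{S_n}^\lambda)=\prod_i h_{a_i}[\ell_i]$ with $\ell_i=\frac1i\sum_{d\mid i}\mu(d)p_d^{i/d}$ (Thrall, Klyachko; it underlies \cite[Theorem~3.6]{GR}, which the paper already cites), combine it with $\ch(f)=\sum_\nu|Z_\nu|^{-1}f(\nu)p_\nu$ and the specialization $p_j\mapsto t_j$, and finish with the standard identity $\sum_{a\ge0} u^a h_a[g]=\exp\bigl(\sum_{k\ge1} \frac{u^k}{k}p_k[g]\bigr)$; your reindexing $(d,k)\mapsto j=dk$, which converts the constraint $d\mid i$ (with $k$ free) into $d\mid\gcd(i,j)$, is exactly right, and your insistence on performing plethysm in the $p$'s before substituting $p_j\mapsto t_j$ is the correct bookkeeping, since the substitution does not commute with plethysm. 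The one ingredient you assert without proof, the plethystic product formula, is classical and citable, so your argument is legitimate and notably shorter; in fact the paper's Lemma~\ref{t:summation_1_A} is essentially an in-house rederivation of the coefficient structure of $p_k[\ell_i]$. What the paper's longer route buys is independence from symmetric function machinery and direct reusability: its intermediate lemmas carry over, with a sign inserted, to the signed analogue (Theorem~\ref{t:summation_4_A_signed}). Note, though, that your route also yields that analogue for free: since $\sign(\nu)=\prod_j(-1)^{(j-1)b_j}$, replacing the specialization $p_j\mapsto t_j$ by $p_j\mapsto(-1)^{j-1}t_j$ in your computation produces exactly the factor $(-1)^{i(j-1)/e}$ on the right-hand side, a point worth adding to your write-up.
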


We prove this result in the following subsections.

\subsection{Values of induced characters}

Let us start our computations 
by writing a general formula (Lemma~\ref{t:HLC_values}) for the values of each higher Lie character, as an induced character.

For an element $x\in S_n$ denote the conjugacy class and the centralizer of $x$ by $C_x$ and $Z_x$, respectively. 
Recall the definitions of $\omega^x$ and $\psi_{S_n}^x$ from Subsection~\ref{subsec:def_hLc_A}. 

\begin{lemma}\label{t:HLC_values}
    If $x \in S_n$ 
    then
    \[
        \psi_{S_n}^x(g) 
        = \frac{|\C_x|}{|\C_g|} 
        \sum_{z \in \C_g \cap Z_x} 
        \omega^x(z) 
        \qquad (\forall g \in S_n).
    \]
\end{lemma}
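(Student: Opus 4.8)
The plan is to compute the value of an induced character using the standard induction formula and then massage the resulting sum over coset representatives into a sum over a single conjugacy class intersected with the centralizer. Recall that for a character $\theta$ of a subgroup $H \le G$, the induced character satisfies
\[
    (\theta \up_H^G)(g)
    = \frac{1}{|H|} \sum_{\substack{y \in G \\ y^{-1} g y \in H}} \theta(y^{-1} g y).
\]
Applying this with $G = S_n$, $H = Z_x$, and $\theta = \omega^x$ gives
\[
    \psi_{S_n}^x(g)
    = \frac{1}{|Z_x|} \sum_{\substack{y \in S_n \\ y^{-1} g y \in Z_x}} \omega^x(y^{-1} g y).
\]

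Next I would reorganize this sum according to the value $z := y^{-1} g y$. Since $\omega^x$ is a class function on $Z_x$ (indeed it is a linear character, but we only need that it is constant on $S_n$-conjugates insofar as they contribute equal values), the key observation is that the element $z$ ranges over $\C_g \cap Z_x$: it must lie in $Z_x$ by the constraint, and it is $S_n$-conjugate to $g$ so it lies in $\C_g$. For each such fixed $z$, I count the number of $y \in S_n$ with $y^{-1} g y = z$. The set of such $y$ is a single right coset of the centralizer $Z_g = Z_{S_n}(g)$, so there are exactly $|Z_g|$ of them, and each contributes $\omega^x(z)$. Therefore
\[
    \psi_{S_n}^x(g)
    = \frac{|Z_g|}{|Z_x|} \sum_{z \in \C_g \cap Z_x} \omega^x(z).
\]

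Finally I would convert the prefactor into the form stated in the lemma using the orbit–stabilizer relations $|\C_x| = |S_n|/|Z_x|$ and $|\C_g| = |S_n|/|Z_g|$, which give $|Z_g|/|Z_x| = |\C_x|/|\C_g|$, yielding exactly
\[
    \psi_{S_n}^x(g)
    = \frac{|\C_x|}{|\C_g|} \sum_{z \in \C_g \cap Z_x} \omega^x(z).
\]

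I expect the only genuinely delicate point to be the bookkeeping in the second step: one must verify that the constraint $y^{-1} g y \in Z_x$ is precisely equivalent to $z \in \C_g \cap Z_x$ (so that no conjugates are lost or double-counted) and that the fiber of the conjugation map $y \mapsto y^{-1} g y$ over each admissible $z$ has exactly $|Z_g|$ elements. Both are standard facts about conjugation actions, so there is no real obstacle; the argument is essentially a repackaging of the Frobenius induction formula combined with orbit–stabilizer counting, and the main care needed is simply keeping track of which centralizer ($Z_x$ versus $Z_g$) plays which role.
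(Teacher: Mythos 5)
Your proof is correct and follows essentially the same route as the paper's: both apply the Frobenius formula for induced characters with $G = S_n$, $H = Z_x$, $\chi = \omega^x$, then count the fibers of the conjugation map $y \mapsto y^{-1}gy$ (each a coset of $Z_g$, hence of size $|Z_g|$) to reduce the sum to one over $\C_g \cap Z_x$, and finally rewrite $|Z_g|/|Z_x|$ as $|\C_x|/|\C_g|$ via orbit--stabilizer. The bookkeeping step you flag as delicate is handled identically in the paper (the paper phrases it as surjectivity of $f(a) = a^{-1}ga$ onto $\C_g$ with $f(a_1)=f(a_2)$ iff $a_1a_2^{-1} \in Z_g$, summing the zero-extension $\chi^0$ over all of $\C_g$ before restricting to $\C_g \cap Z_x$), so there is no gap.
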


\begin{proof}
    Let $G$ be a group, and $\chi$ a character of a subgroup $H$ of $G$. 
    Define a function $\chi^0: G \to \CC$ by
    \[
        \chi^0(g) :=
        \begin{cases}
            \chi(g), &\text{if } g \in H; \\
            0, &\text{if } g \in G \setminus H.
        \end{cases}
    \]
    By~\cite[(5.1)]{Isaacs}, an explicit formula for the induced character $\chi \up_H^G$ is
    \[
        \chi \up_H^G(g)
        = \sum_{G = \cup_a aH} \chi^0(a^{-1} g a)
        = \frac{1}{|H|} \sum_{a\in G} \chi^0(a^{-1} g a)
        \qquad (\forall g \in G) .
    \]
    The mapping $f : G \to \C_g$ defined by
    $f(a) := a^{-1} g a$ $(\forall a \in G)$ is surjective, and satisfies:
    $f(a_1) = f(a_2)$ if and only if $a_1 a_2^{-1} \in Z_g$.
    Hence
    \[
        \chi \up_H^G(g)
        = \frac{|Z_g|}{|H|} \sum_{z \in \C_g}\chi^0(z)
        = \frac{|Z_g|}{|H|} \sum_{z \in \C_g \cap H} \chi(z).
    \]
    Consider now an element $x \in S_n$, 
    and apply the above formula
    with $G = S_n$, $H = Z_x = Z_{S_n}(x)$, and 
    $\chi = \omega^x$, the linear character on the centralizer $Z_x$ described in Definition~\ref{def:HLC_A}(a). Then
    \[
        \psi_{S_n}^x(g) 
        = \omega^x \up_{Z_x}^{S_n} (g)
        = \frac{|Z_g|}{|Z_x|} \sum_{z \in \C_g \cap Z_x} \omega^x(z) 
        \qquad (\forall g \in S_n).
    \]
    Recalling that $|Z_x| = |S_n|/|\C_x|$ and  $|Z_g| = |S_n|/|\C_g|$ completes the proof.  
\end{proof}

\subsection{The structure of a single cycle}

We want to study the $S_n$-structure of elements $z \in \C_g \cap Z_x$.
Our main initial result is Corollary~\ref{t:cycles2_A}, describing the $S_n$-structure of a single cycle of $z$.

Assume that $x \in S_n$ has cycle type $\lambda$.
Decompose
\[
    x = \prod_{i \ge 1} x_{i} \,,    
\]
where each $x_{i}$ is a product of $a_{i}$ disjoint cycles of length $i$ $(i \ge 1)$, and only finitely many of the factors are nontrivial. 
Then, by Lemma~\ref{t:centralizer_in_Sn},
\[
    Z_{S_n}(x)  
    = \bigtimes_{i \ge 1} Z_{S_{i a_{i}}}(x_{i}) 
    \cong \bigtimes_{i \ge 1}G_{i} \wr S_{a_{i}} ,
\]
where $G_i$ is the cyclic group of order $i$.

Assume that $g \in S_n$ has cycle type $\nu$, with $b_{j}$ cycles of length $j$ $(j \ge 1)$.
Let $z \in \C_g \cap Z_x$, and decompose it as
\[
    z = \prod_{i \ge 1} z_{i} \,,
\]
where $z_{i} \in Z_{S_{i a_{i}}}(x_{i}) \cong G_{i} \wr S_{a_{i}}$ $(i \ge 1)$.
Using a finer decomposition, assume that the underlying permutation of $z_{i}$, as an element of $S_{i a_{i}}$, has $m_{i,j}$ cycles of length $j$ $(j \ge 1)$.
Of course, 
\[
\sum_{j} j m_{i,j} = i a_{i}
\quad \text{ and } \quad 
\sum_{i} m_{i,j} = b_{j}.
\]
Assume first that, as an element of $G_{i} \wr S_{a_{i}}$, $z_{i}$ has a single cycle $c$ of length $\ell$ and $G_{i}$-class $\gamma$.
What is the structure of $c$ as an element of $S_{i a_{i}}$?

Let $\zeta_i \in \CC$ be a primitive complex $i$-th root of $1$, generating the cyclic group $G_i$ interpreted as the group of complex $i$-th roots of $1$.

\begin{lemma}\label{t:cycles_from_GwrS_to_A}
    Fix $i \ge 1$, and let $x_{i} \in S_{i a_{i}}$ be a product of $a_{i}$ cycles of length $i$.
    Let $c \in Z_{S_{i a_{i}}}(x_{i}) 
    \cong G_{i} \wr S_{a_{i}}$ be a single cycle of length $\ell$ {\rm (}in $S_{a_{i}}${\rm )} and class $\gamma = \zeta_i^k \in G_{i}$.
    If $d := \gcd(k,i)$
    then, as an element of $S_{ia_{i}}$, $c$ is a product of $d$ disjoint cycles, each of length $j = \ell i/d$.
\end{lemma}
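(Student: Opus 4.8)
The plan is to introduce explicit coordinates and reduce the statement to a short cycle computation. Since $c$, as an element of $G_i \wr S_{a_i}$, is supported only on the $\ell$ blocks permuted by its underlying $\ell$-cycle $\sigma \in S_{a_i}$ (the other blocks being fixed pointwise), I restrict attention to those $\ell$ cycles of $x_i$, a set of $\ell i$ points. I label these points by pairs $(p,r)$ with $p \in \ZZ/\ell\ZZ$ indexing the block (the $i$-cycle) and $r \in \ZZ/i\ZZ$ indexing the position inside it, chosen so that $x_i$ acts by $x_i(p,r) = (p, r+1)$.

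Next I describe a general centralizing element in these coordinates. Any $c$ commuting with $x_i$ must permute the $i$-cycles among themselves and act on each by a cyclic shift, so
\[
    c(p,r) = (\sigma(p),\, r + g_p) \qquad (p \in \ZZ/\ell\ZZ,\ r \in \ZZ/i\ZZ)
\]
for some shifts $g_p \in \ZZ/i\ZZ$, and a one-line check shows this commutes with $x_i$. Under the isomorphism $Z_{S_{ia_i}}(x_i) \cong G_i \wr S_{a_i}$, the tuple $(g_p)_p$ is the base-group component and $\sigma$ is the wreathing permutation, with $G_i$ identified with the $i$-th roots of unity via $g_p \leftrightarrow \zeta_i^{g_p}$. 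I then recall that the $G_i$-class attached to a single $\ell$-cycle of a wreath-product element is its \emph{cycle product}, so that the hypothesis $\gamma = \zeta_i^k$ translates into $\sum_{p} g_p \equiv k \pmod i$. After relabeling the $\ell$ blocks I may assume $\sigma$ is the standard $\ell$-cycle $p \mapsto p+1$.

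With this setup the key computation is iterating $c$. Telescoping the shifts gives
\[
    c^\ell(p,r) = \Bigl(p,\ r + \textstyle\sum_{q} g_q\Bigr) = (p,\, r+k),
\]
independent of the starting point, so $c^\ell$ acts on each fiber $\{(p,r) : r \in \ZZ/i\ZZ\}$ by translation by $k$, of order $i/d$ where $d = \gcd(k,i)$. Tracking a single orbit of $c$: its first coordinate returns to the start precisely after multiples of $\ell$ steps, and each such return advances the second coordinate by $k$; hence every orbit has length $\ell \cdot (i/d) = \ell i / d$. Since there are $\ell i$ points in total, the number of orbits is $\ell i / (\ell i / d) = d$, so $c$ is a product of $d$ disjoint cycles of length $\ell i / d$, as claimed.

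The main obstacle is the second step: correctly identifying the abstract $G_i$-class $\gamma$ of the cycle $c$ with the additive total shift $k \bmod i$, and fixing the wreath-product conventions so that this identification is forced rather than merely assumed. Once the coordinates and the cycle-product interpretation of $\gamma$ are pinned down, the remaining argument is the telescoping identity above together with the elementary orbit count.
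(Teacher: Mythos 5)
Your proof is correct and follows essentially the same route as the paper's: explicit coordinates on the $\ell \times i$ array of points, the computation that $c^\ell$ acts as translation by $k$ on each fiber, and the orbit count via $d = \gcd(k,i)$. The only cosmetic difference is that you keep general shifts $(g_p)$ and invoke the cycle-product identification $\sum_p g_p \equiv k \pmod i$, whereas the paper first conjugates $c$ into the normal form with a single nonzero shift $k$; these are interchangeable ways of pinning down the class $\gamma$.
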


\begin{proof}
    Following Lemma~\ref{t:centralizer_in_Sn}, 
    and after an appropriate conjugation in $S_{i a_{i}}$,
    we can assume that we have the following scenario:
    \begin{itemize}
    
        \item 
        There is an $\ell \times i$ rectangular array $(p_{s,t})$ of distinct integers in $[i a_{i}]$ (say, $p_{s,t} = (s-1)i + t$ for all $1 \le s \le \ell$ and $1 \le t \le i$).
        Each of the $\ell$ rows corresponds to a cycle (of length $i$) of $x_{i}$. As an element of $S_{i a_{i}}$,
        \begin{align*}
            x_{i} : \quad
            p_{s,t} &\mapsto p_{s,t+1}
            \qquad (1 \le s \le \ell,\, 1 \le t \le i),
        \end{align*}
        where $p_{s,i+1}$ is interpreted as $p_{s,1}$. On the other elements of $[i a_{i}]$, $x_{i}$ acts as the identity.

        \item 
        As an element of $S_{i a_{i}}$, $c \in Z_{x_{i}}$ permutes the $\ell$ rows cyclically, and specifically acts as follows:
        if $\gamma = \zeta_i^k \in G_i$ $(0 \le k < i)$, then 
        \begin{align*}
            c : \quad
            p_{s,t} &\mapsto p_{s+1,t} 
            \qquad (1 \le s \le \ell-1,\, 1 \le t \le i), \\
            p_{\ell,t} &\mapsto p_{1,t+k} 
            \qquad (1 \le t \le i),
        \end{align*}
        with $p_{1,t+k}$ interpreted as $p_{1,t+k-i}$ if $t+k > i$.        
    \end{itemize}
    Since $c$ permutes the $\ell$ rows cyclically, each cycle of $c$ (as an element of $S_{i a_{i}}$) has length divisible by $\ell$. In fact,
    \[
        c^\ell : \quad
        p_{s,t} \mapsto p_{s,t+k} 
        \qquad (\forall s,t),
    \]
    with $p_{s,t+k}$ interpreted as $p_{s,t+k-i}$ if $t+k > i$.
    Defining $d := \gcd(k,i)$, we have $\gcd(k/d,i/d) = 1$. The smallest multiple of $k$ which is divisible by $i$ is thus $(i/d) \cdot k = i \cdot (k/d)$. It follows that all the cycles of $c$ have length $\ell i/d$, and their number is $d$. 
\end{proof}

Denoting $e := i/d$, so that $j = \ell e$, we can restate Lemma~\ref{t:cycles_from_GwrS_to_A} as follows.

\begin{corollary}\label{t:cycles2_A}
    Fix $i,j \ge 1$, and let $x_{i} \in S_{i a_{i}}$ be a product of $a_{i}$ cycles of length $i$.
    To each cycle $c$ of $z_{i} \in Z_{x_{i}} \cong G_{i} \wr S_{a_{i}}$ 
    which, as an element of $S_{ia_i}$, is a product of disjoint cycles of length $j$, 
    there corresponds a common divisor $e$ of $i$ and $j$ such that, denoting $d := i/e$ and $\ell := j/e$, the following holds:
    As an element of $G_i \wr S_{a_i}$, 
    the cycle $c$ has length $\ell$ {\rm (}in $S_{a_{i}}${\rm )} and class $\gamma = \zeta_{i}^k \in G_{i}$, for some integer $k$ satisfying $\gcd(k,i) = d$; 
    and it corresponds, as an element of $S_{i \ell} = S_{j d}$, to a product of $d$ disjoint cycles of length $j$.
\end{corollary}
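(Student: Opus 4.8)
The plan is to observe that Corollary~\ref{t:cycles2_A} is not a new result but a direct reformulation of Lemma~\ref{t:cycles_from_GwrS_to_A}, obtained purely by the change of variables $e := i/d$, where $d = \gcd(k,i)$ is the quantity appearing in the lemma. Since the lemma has already been proved, the task is simply to verify that every assertion in the corollary is a transcription of an assertion in the lemma under this substitution, and to check that starting from the data of the corollary (a cycle $c$ of $z_i$ that decomposes into $j$-cycles in $S_{ia_i}$) one recovers exactly the hypotheses of the lemma.

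First I would fix the direction of the argument. Given $i,j \ge 1$ and a cycle $c$ of $z_i \in Z_{x_i} \cong G_i \wr S_{a_i}$ which, viewed in $S_{ia_i}$, is a product of disjoint $j$-cycles, I would let $\ell$ be the length of $c$ as an element of $S_{a_i}$ and $\gamma = \zeta_i^k$ its $G_i$-class, so that the hypotheses of Lemma~\ref{t:cycles_from_GwrS_to_A} are met. The lemma then tells me that $c$, as an element of $S_{ia_i}$, is a product of $d = \gcd(k,i)$ disjoint cycles, each of common length $ji/d$ in the notation of the lemma; but that common length must equal the given cycle length $j$, so I obtain the relation $j = \ell i / d$, i.e. $\ell = jd/i$. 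Setting $e := i/d$ gives $d = i/e$ and, from $\ell = jd/i = j/e$, also $\ell = j/e$. In particular $e$ is a positive integer dividing $i$ (since $d \mid i$), and $\ell = j/e$ being a positive integer forces $e \mid j$ as well, so $e$ is a genuine common divisor of $i$ and $j$, as claimed.

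Having established $e \mid \gcd(i,j)$, I would then simply rewrite the lemma's conclusions in the new variables: the condition $d = \gcd(k,i)$ becomes $\gcd(k,i) = d = i/e$; the cycle $c$ has length $\ell = j/e$ in $S_{a_i}$ and class $\zeta_i^k$; and its image in $S_{i\ell}$ is a product of $d = i/e$ disjoint cycles of length $j$. The only bookkeeping point worth flagging is the ambient symmetric group indexing: the support of $c$ as an element of $S_{a_i}$ has size $\ell$, and the corresponding support in the full symmetric group has size $i\ell = i \cdot (j/e) = j \cdot (i/e) = jd$, which is why the statement records this as an element of $S_{i\ell} = S_{jd}$. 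I would verify this identity $i\ell = jd$ explicitly, since it is the one place where both substitutions must be used simultaneously.

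There is no genuine mathematical obstacle here; the corollary is an accounting statement whose content is entirely supplied by Lemma~\ref{t:cycles_from_GwrS_to_A}. The one place requiring slight care is the logical direction: the lemma is phrased as going \emph{from} the $G_i \wr S_{a_i}$ data \emph{to} the $S_{ia_i}$ structure, whereas the corollary begins \emph{from} a cycle already known to split into $j$-cycles and asserts the existence of the divisor $e$. The mildly nontrivial step is therefore to confirm that the common cycle length produced by the lemma is forced to equal the prescribed $j$, from which the divisibility $e \mid j$ (and hence $e \mid \gcd(i,j)$) is deduced; everything else is the mechanical substitution $e = i/d$.
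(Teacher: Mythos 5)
Your proposal is correct and matches the paper exactly: the paper itself introduces the corollary with the remark that, denoting $e := i/d$ (so that $j = \ell e$), it is simply a restatement of Lemma~\ref{t:cycles_from_GwrS_to_A}, which is precisely the change-of-variables argument you carry out. Your extra verifications (that $e \mid \gcd(i,j)$ and that $i\ell = jd$) are the right bookkeeping for this substitution and are consistent with the paper's intent.
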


\subsection{Summation on a single cycle}

We now want to 
compute the sum in Lemma~\ref{t:HLC_values} on a certain small subset of $\C_g \cap Z_x$, when $g$ and $x$ have special cycle types, with all cycles of the same length.
The main result here is Lemma~\ref{t:summation_1_A}, addressing summation over elements with a single underlying cycle.

As a computational tool, recall the following well-known fact regarding the classical {\em M\"obius function} $\mu : \NN \to \{-1,0,1\}$, defined by
$\mu(1) := 1$, $\mu(n) := (-1)^k$ if $n$ is a product of $k \ge 1$ distinct primes, and $\mu(n) := 0$ otherwise (namely, if $n$ is not square-free).

\begin{lemma}\label{t:Mobius}
    For any positive integer $n$,
    \[
        \sum_{\substack{0 \le k < n \\ \gcd(k,n) = 1}} \zeta_n^k = \mu(n) \, .
    \]    
\end{lemma}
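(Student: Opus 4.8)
The plan is to show that $f(n) := \sum_{0 \le k < n,\ \gcd(k,n)=1} \zeta_n^k$, the sum of the primitive complex $n$-th roots of unity, satisfies the same divisor-sum recurrence as $\mu$, and then to conclude by M\"obius inversion.

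First I would partition the set of all $n$-th roots of unity according to their multiplicative order. Each $n$-th root of unity $\zeta_n^k$ (for $0 \le k < n$) has order $d = n/\gcd(k,n)$, which is a divisor of $n$; conversely, the roots of order exactly $d$ are precisely the primitive $d$-th roots of unity. As $k$ ranges over $\{0,1,\dots,n-1\}$, the quantity $\zeta_n^k$ thus runs over each primitive $d$-th root of unity, for each $d \mid n$, exactly once. Summing therefore gives
\[
    \sum_{d \mid n} f(d) = \sum_{k=0}^{n-1} \zeta_n^k .
\]

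The right-hand side is a finite geometric sum: it equals $0$ when $n > 1$ (since $\zeta_n \ne 1$) and equals $1$ when $n = 1$. Hence $\sum_{d \mid n} f(d) = [n = 1]$, the indicator of $n=1$. This is exactly the defining relation of the M\"obius function, $\sum_{d \mid n}\mu(d) = [n=1]$. Since a function on the positive integers is uniquely determined by its divisor sums, M\"obius inversion applied to $g(n) = \sum_{d\mid n} f(d) = [n=1]$ yields $f(n) = \sum_{d\mid n}\mu(d)\,[n/d = 1] = \mu(n)$, as claimed. Equivalently, one argues by strong induction on $n$: the base case $f(1) = \zeta_1^0 = 1 = \mu(1)$ holds, and for $n>1$ the recurrence gives $f(n) = -\sum_{d\mid n,\, d<n} f(d) = -\sum_{d\mid n,\, d<n}\mu(d) = \mu(n)$, using $\sum_{d\mid n}\mu(d)=0$.

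There is no substantial obstacle here; the only point requiring care is the bijective bookkeeping in the first step, namely verifying that each $n$-th root of unity is counted exactly once and that its order determines the unique divisor $d$ for which it is a primitive $d$-th root. Once the recurrence $\sum_{d\mid n} f(d) = [n=1]$ is established, the identification of $f$ with $\mu$ is purely formal.
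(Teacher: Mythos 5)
Your proof is correct. The paper offers no proof of this lemma at all---it simply records the identity as a ``well-known fact'' about the M\"obius function---so there is nothing to diverge from; your argument (partition the $n$-th roots of unity by multiplicative order to get $\sum_{d \mid n} f(d) = [n=1]$, then apply M\"obius inversion or strong induction) is the standard textbook derivation, and the bookkeeping step you flag, that $\zeta_n^k \mapsto \zeta_d^{k d/n}$ with $d = n/\gcd(k,n)$ hits each primitive $d$-th root exactly once, is verified correctly.
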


The following result performs the summation in Lemma~\ref{t:HLC_values} only on the elements of $C_g \cap Z_x$ corresponding to one specific cycle in $S_{a_{i}}$; both $g$ and $x$ are assumed to have cycle types with all cycles of the same length.

\begin{lemma}\label{t:summation_1_A}
    Fix $i,j \ge 1$, and let $x = x_{i} \in S_{i a_{i}}$ be a product of $a_{i}$ cycles of length $i$.
    Let $e$ be a common divisor of $i$ and $j$, and denote $d := i/e$ and $\ell := j/e$.
    Let $\sigma \in S_{a_{i}}$ be a permutation which has a single cycle of length $\ell$, and is the identity outside the support of this cycle.
    Let $R_{i,j}(\sigma,e)$ be the set of all the elements $z \in Z_x \cong G_{i} \wr S_{a_{i}}$ corresponding to the underlying permutation (cycle) $\sigma$, with suitable $G_{i}$-classes, such that, as elements of $S_{i \ell} = S_{j d}$, they are products of $d$ disjoint cycles of length $j$. 
    Denote
    \[
        K_{i,j}(\sigma,e) 
        := \frac{1}{i^{\ell - 1}} \sum_{z \in R_{i,j}(\sigma,e)} \omega^x(z) \, .
    \]    
    Then $K_{i,j}(\sigma,e)$ actually depends only on $e$:
    \[
        K_{i,j}(\sigma.e) = \mu(e),
    \]
    where $\mu(\cdot)$ is the classical M\"obius function.

\end{lemma}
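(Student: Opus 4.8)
The plan is to compute the sum $K_{i,j}(\sigma,e)$ directly by understanding, via Corollary~\ref{t:cycles2_A}, exactly which elements $z \in Z_x \cong G_i \wr S_{a_i}$ lie in $R_{i,j}(\sigma,e)$ and what value $\omega^x$ takes on each of them. Since $\sigma$ is a single $\ell$-cycle supported on $\ell$ of the $a_i$ positions, an element $z$ with underlying permutation $\sigma$ is specified by choosing a $G_i$-label $\zeta_i^{k_1},\ldots,\zeta_i^{k_\ell}$ on each of the $\ell$ arrows of the cycle, where each $k_s \in \{0,1,\ldots,i-1\}$. So $R_{i,j}(\sigma,e)$ is, before imposing the divisor constraint, a set of size $i^\ell$ indexed by $(k_1,\ldots,k_\ell) \in (\ZZ/i\ZZ)^\ell$.

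First I would recall that the $G_i$-class of the single cycle $c$ governing its image in $S_{ia_i}$ is the product $\zeta_i^{k_1} \cdots \zeta_i^{k_\ell} = \zeta_i^{k}$ with $k \equiv k_1 + \cdots + k_\ell \pmod i$; this is the standard fact that the "color" of a wreath-product cycle is the product of the colors along it. By Corollary~\ref{t:cycles2_A}, the requirement that $c$ decompose in $S_{i\ell}$ into $d$ cycles of length $j = \ell e$ is precisely the condition $\gcd(k,i) = d = i/e$. Next I would evaluate $\omega^x(z)$. By Definition~\ref{def:HLC_A}(a), $\omega_i$ is a primitive character of $G_i$ on the cyclic part and trivial on the $S_{a_i}$ part, and for a wreath product the linear character $\omega_i$ evaluated on $z$ contributes $\omega_i$ applied to the product of colors, i.e. a value $\zeta_i^{k}$ (up to the fixed choice of primitive character, which I may normalize to the identity character $\zeta_i^a \mapsto \zeta_i^a$). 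Thus each admissible $z$ contributes $\zeta_i^{k}$, and the contribution depends only on $k = k_1 + \cdots + k_\ell \bmod i$.

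The summation then factors into two independent steps. The inner step counts, for a fixed target sum $k \in \ZZ/i\ZZ$, how many tuples $(k_1,\ldots,k_\ell)$ have $k_1 + \cdots + k_\ell \equiv k$; this count is exactly $i^{\ell-1}$ for every $k$, since the first $\ell-1$ coordinates are free and the last is determined. This is precisely the reason the normalizing factor $1/i^{\ell-1}$ appears in the definition of $K_{i,j}(\sigma,e)$: it cancels this multiplicity. After this cancellation we are left with
\[
    K_{i,j}(\sigma,e)
    = \sum_{\substack{0 \le k < i \\ \gcd(k,i) = d}} \zeta_i^{k},
\]
summing the primitive-character value over exactly those residues $k$ with $\gcd(k,i) = d = i/e$. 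The outer step is to recognize this as a Möbius sum: writing $k = d k'$ with $0 \le k' < e$ and $\gcd(k',e) = 1$, and using $\zeta_i^{d} = \zeta_{i/d} = \zeta_e$, the sum becomes $\sum_{0 \le k' < e,\, \gcd(k',e)=1} \zeta_e^{k'}$, which equals $\mu(e)$ by Lemma~\ref{t:Mobius}. This yields the claimed identity $K_{i,j}(\sigma,e) = \mu(e)$ and, in particular, shows it is independent of $i,j,\sigma$.

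The routine bookkeeping — verifying the wreath-product color-product formula and the evaluation of $\omega_i$ on a single wreath cycle — is standard but is where I would be most careful. The main obstacle I anticipate is making precise the claim that the $G_i$-class of $c$ is the product of the $\ell$ individual colors and that $\omega^x(z)$ equals the primitive character applied to this product; this requires pinning down the conventions for how $G_i \wr S_{a_i}$ acts and how colors compose along a cycle (essentially the content already used in Lemma~\ref{t:cycles_from_GwrS_to_A}, where the net color $k$ determined the cycle structure). Once the correspondence $z \leftrightarrow (k_1,\ldots,k_\ell)$ and the value $\omega^x(z) = \zeta_i^{k_1 + \cdots + k_\ell}$ are established, the divisor bookkeeping, the cancellation of $i^{\ell-1}$, and the reduction to Lemma~\ref{t:Mobius} are immediate.
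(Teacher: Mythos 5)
Your proof is correct and follows essentially the same route as the paper's: identify the admissible $G_i$-classes $C(i,d)$ via Corollary~\ref{t:cycles2_A}, observe that each class occurs with multiplicity $i^{\ell-1}$ (cancelling the normalization), and evaluate $\sum_{\gcd(k,i)=d}\zeta_i^k = \mu(e)$ by the substitution $k = dk'$ and Lemma~\ref{t:Mobius}. The only difference is that you make explicit, via the tuple parametrization $(k_1,\ldots,k_\ell)$ and the color-product formula $\omega^x(z)=\zeta_i^{k_1+\cdots+k_\ell}$, two facts the paper asserts without proof, which is a harmless (indeed welcome) elaboration.
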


\begin{proof}
    By 
    Corollary~\ref{t:cycles2_A},
    the set of possible $G_{i}$-classes $\gamma$ of elements $z \in R_{i,j}(\sigma,e)$ depends on $i$ and $d = i/e$, but not on $j$ or $\sigma$ (as long as $j$ is a multiple of $e$).
    Denote this set by $C(i,d)$:
    \[
       C(i,d) =
        \{\zeta_{i}^k \,:\, \gcd(k,i) = d\}. 
    \]
    The number of elements $z \in R_{i,j}(\sigma,e)$ with any specific $G_{i}$-class is $|G_{i}|^{\ell - 1} = i^{\ell - 1}$.
    Denote 
    \[
        K_{i,j}(\sigma,e) 
        := \frac{1}{i^{\ell - 1}} \sum_{z \in R_{i,j}(\sigma,e)} \omega^x(z) \, .
    \]
    It follows, by Definition~\ref{def:HLC_A}(a) of $\omega^x$, that
    \[
        K_{i,j}(\sigma,e)
        = \sum\limits_{\gamma \in C(i,d)} \gamma        = \sum_{\substack{0 \le k < i \\ \gcd(k,i) = d}} \zeta_i^k.
    \]
    Denoting $k' := k/d$ and using Lemma~\ref{t:Mobius}, we conclude that
    \[
        \sum_{\substack{0 \le k < i \\ \gcd(k,i) = d}} \zeta_i^k
        = \sum_{\substack{0 \le k' < i/d \\ \gcd(k',i/d) = 1}} \zeta_{i/d}^{k'}
        = \mu(i/d) \, ,
    \]
    namely
    \[
        K_{i,j}(\sigma,e) = \mu(e),
    \] 
    as claimed.
\end{proof}

\subsection{Proof of Theorem~\ref{t:summation_4_A}}

Extending the previous result, we shall now sum $\omega^x$ on the whole set $C_g \cap Z_x$, with increasing generality of the cycle types of $g$ and $x$. 
This will lead to a proof of Theorem~\ref{t:summation_4_A}, providing a generating function for the values of higher Lie characters $\psi_{S_n}^\lambda = \psi_{S_n}^x$.

\begin{definition}
    For positive integers $i$ and $j$, let
    \[
        E(i,j) := \{e \ge 1 \,:\, e \text{ divides both } i \text{ and } j\}.
    \]
\end{definition}

Note that $E(i,j)$ is never empty, since it always contains $e = 1$.

For an indeterminate $s$, let $\CC[[s]]$ be the ring of formal power series in $s$ over the field $\CC$.
We now extend Lemma~\ref{t:summation_1_A}, and compute the sum in Lemma~\ref{t:HLC_values} on the whole set $C_g \cap Z_x$, still restricting $g$ and $x$ to have cycle types with all cycles of the same length.

\begin{lemma}\label{t:summation_2_A}
    Fix $i,j \ge 1$.
    For any integer $m \ge 0$, let $x = x_{i}(m) \in S_{im}$ be a product of $m$ disjoint cycles of length $i$.
    Let $R_{i,j}(m)$ be the set of all elements $z \in Z_{S_{im}}(x_{i}(m)) \cong G_{i} \wr S_m$ which, as elements of $S_{im}$, are products of $im/j$ disjoint cycles of length $j$.
    (Of course, necessarily $j$ divides $im$.)
    Then, in $\CC[[s]]$,
    \[
        \sum_{m \ge 0} 
        \sum_{z \in R_{i,j}(m)} \omega^x(z)  
        \frac{s^m}{m!}
        = \exp \left( \sum_{e \in E(i,j)}
        \mu(e) \,
        \frac{(is)^{j/e}}{ij/e} \right) .
    \]
\end{lemma}

\begin{proof}    
    Assume that $E(i,j) = \{e_1, \ldots, e_q\}$, and define $\ell_k := j/e_k$ $(1 \le k \le q)$.
    For each $m \ge 0$, let 
    \[
        N_{i,j}(m) 
        := \{(n_1,\ldots,n_q) \in \ZZ_{\ge 0}^q \,:\, n_1 \ell_1 + \ldots n_q \ell_q = m\}.  
    \]
    By Corollary~\ref{t:cycles2_A}, 
    the possible cycle lengths of elements of $R_{i,j}(m)$, viewed as elements of $G_{i} \wr S_m$, are $\ell_1, \ldots, \ell_q$. If such an element has $n_k$ cycles of length $\ell_k$ $(1 \le k \le q)$, then clearly $(n_1,\ldots,n_q) \in N_{i,j}(m)$. 
    The number of permutations in $S_m$ with this cycle structure is 
    \[
        \frac{m!}{n_1! \cdots n_q! \cdot \ell_1^{n_1} \cdots \ell_q^{n_q}}.
    \]
    By Lemma~\ref{t:summation_1_A}, for each common divisor $e$ of $i$ and $j$ and each specific cycle $\sigma \in S_m$ of length $\ell = j/e$,
    \[
        \sum_{z \in R_{i,j}(\sigma,e)} \omega^x(z) 
    = i^{\ell - 1} \cdot \mu(e).
    \]    
    The linearity of the character $\omega^x$ thus implies that
    \begin{align*}
        \sum_{z \in R_{i,j}(m)} \omega^x(z) 
        &= \sum_{(n_1,\ldots,n_q) \in N_{i,j}(m)}
        \frac{m!}{\prod_{k=1}^{q} n_k! \ell_k^{n_k}}
        \cdot \prod_{k=1}^{q} 
        \left( i^{\ell_k - 1}  \mu(e_k) \right)^{n_k} \\
        &= m! \sum_{(n_1,\ldots,n_q) \in N_{i,j}(m)} 
        \prod_{k=1}^{q} \frac{1}{n_k!} \left( 
        \mu(e_k) \,
        \frac{i^{\ell_k}}{i \ell_k} \right)^{n_k} .
    \end{align*}    
    Let $s$ be an indeterminate, and consider the ring $\CC[[s]]$ of formal power series in $s$ over the field $\CC$.
    By the definition of $N_{i,j}(m)$ and the above computation, it follows that the number
    \[
        \frac{1}{m!} \sum_{z \in R_{i,j}(m)} \omega^x(z) 
    \]
    is the coefficient of $s^m$ in the product
    \begin{align*}
        \prod_{k=1}^{q} \sum_{n_k=0}^{\infty} \frac{1}{n_k!} 
        \left( \mu(e_k) \,
        \frac{(is)^{\ell_k}}{i \ell_k} \right)^{n_k}
        &= \prod_{k=1}^{q} 
        \exp \left( \mu(e_k) \,
        \frac{(is)^{\ell_k}}{i \ell_k} \right) \\
        &= \prod_{e \in E(i,j)} 
        \exp \left( \mu(e) \,
        \frac{(is)^{j/e}}{ij/e} \right) .
    \end{align*}
    In other words,
    \begin{align*}
        \sum_{m \ge 0} 
        \sum_{z \in R_{i,j}(m)} \omega^x(z) \,
        \frac{s^m}{m!} 
        &= \prod_{e \in E(i,j)}  
        \exp \left( \mu(e) \,
        \frac{(is)^{j/e}}{ij/e} \right) \\ 
        &= \exp \left( \sum_{e \in E(i,j)}  
        \mu(e) \,
        \frac{(is)^{j/e}}{ij/e} \right) . 
        \qedhere
    \end{align*}
\end{proof}

Now let $s$ be an indeterminate and $\{t_{j} \,:\, j \ge 1\}$ be a countable set of indeterminates, denoted succinctly by $\bt$. Consider the ring of formal power series $\CC[[s,\bt]]$.
We extend Lemma~\ref{t:summation_2_A} and compute the sum in Lemma~\ref{t:HLC_values} on the whole set $C_g \cap Z_x$, restricting only $x$ to have a cycle type with all cycles of the same length.
 
\begin{lemma}\label{t:summation_3_A}
    Fix $i \ge 1$.
    For any integer $m \ge 0$, let $x = x_{i}(m) \in S_{im}$ be a product of $m$ disjoint cycles of length $i$. Let $R_{i}(m) := Z_{S_{im}}(x_{i}(m)) \cong G_{i} \wr S_m$. 
    As an element of $S_{im}$, write each $z \in R_{i}(m)$ as a product of $m_{j}(z)$ disjoint cycles of length $j$ $(j \ge 1)$.
    Then, in $\CC[[s,\bt]]$,
    \[
        \sum_{m \ge 0} 
        \sum_{z \in R_{i}(m)} 
        \omega^x(z) \, 
        \frac{s^m}{m!}
        \prod_{j} t_{j}^{j m_{j}(z)}
        = \exp \left( 
        \sum_{j} \sum_{e \in E(i,j)}  
        \mu(e) \,
        \frac{(is t_{j}^i)^{j/e}}{ij/e}
        \right) .
    \]
\end{lemma}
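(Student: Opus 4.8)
The plan is to exploit the fact that both the weight $\omega^x(z)$ and the monomial $\prod_j t_j^{j m_j(z)}$ factor multiplicatively over the cycles of the underlying permutation $\sigma \in S_m$ of $z \in G_i \wr S_m$, and then to apply the exponential formula. Writing $z$ as a pair consisting of $\sigma$ together with its fiber data in $G_i^m$, recall from Definition~\ref{def:HLC_A}(a) that $\omega^x = \omega_i$ is trivial on $S_m$ and primitive on the $G_i$-factor; hence $\omega^x(z) = \prod_{c} \chi(\gamma_c)$, a product over the cycles $c$ of $\sigma$, where $\chi$ is a fixed primitive character of $G_i$ and $\gamma_c \in G_i$ is the $G_i$-class of $c$. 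By Corollary~\ref{t:cycles2_A}, a cycle $c$ of length $\ell$ and class $\gamma_c = \zeta_i^k$ with $d := \gcd(k,i)$ contributes, as an element of $S_{im}$, exactly $d$ disjoint cycles of the single length $j = \ell i/d = \ell e$, where $e := i/d$. Since $jd = \ell i$, the contribution of $c$ to $\prod_j t_j^{j m_j(z)}$ is precisely $t_j^{jd} = t_j^{i\ell}$. Thus the total weight of $z$ is a product over the cycles of $\sigma$ of the per-cycle weight $\chi(\gamma_c)\, t_{\ell e}^{\,i\ell}$, which depends only on the length $\ell$ and the class $\gamma_c$.

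Given this multiplicativity, I would invoke the exponential formula: the exponential generating function
\[
\sum_{m \ge 0} \frac{s^m}{m!} \sum_{z \in R_i(m)} \omega^x(z) \prod_j t_j^{j m_j(z)}
\]
equals $\exp(C(s,\bt))$, where $C(s,\bt)$ is the corresponding \emph{connected} generating function obtained by restricting to elements whose underlying permutation is a single cycle. On a fixed set of $\ell$ labeled points there are $(\ell-1)!$ such cyclic cycles $\sigma$, and for each of them I would sum the per-cycle weight over all fiber data. Grouping the fiber data by the common divisor $e = i/d$ of $i$ and $j = \ell e$ and applying Lemma~\ref{t:summation_1_A}, the sum of $\omega^x(z)$ over the fiber data producing a given $e$ equals $i^{\ell-1}\mu(e)$, carrying the $t$-weight $t_{\ell e}^{\,i\ell}$. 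Hence
\[
C(s,\bt) = \sum_{\ell \ge 1} \frac{(\ell-1)!}{\ell!}\, s^\ell \sum_{e \mid i} i^{\ell-1}\mu(e)\, t_{\ell e}^{\,i\ell} = \sum_{\ell \ge 1} \frac{s^\ell}{\ell} \sum_{e \mid i} i^{\ell-1}\mu(e)\, t_{\ell e}^{\,i\ell}.
\]

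Finally I would reindex the double sum by setting $j := \ell e$: as $\ell$ ranges over positive integers and $e$ over divisors of $i$, the pair $(j,e)$ ranges exactly over all $j \ge 1$ together with $e \in E(i,j)$, since $e \mid i$ and $e \mid j = \ell e$ are jointly equivalent to $e \mid \gcd(i,j)$, with $\ell = j/e$. Substituting $\ell = j/e$ and using $(is t_j^i)^{j/e} = i^\ell s^\ell t_j^{i\ell}$ together with $ij/e = i\ell$ turns $C(s,\bt)$ into $\sum_j \sum_{e \in E(i,j)} \mu(e)\,(is t_j^i)^{j/e}/(ij/e)$, which is the claimed exponent. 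The main obstacle is the careful bookkeeping of the $t$-weights---in particular the identity $jd = i\ell$ that collapses the $d$ equal-length cycles produced by a single underlying $\ell$-cycle into the clean factor $t_j^{i\ell}$---and confirming that the exponential-formula normalization correctly reproduces the $i^{\ell-1}$ prefactor supplied by Lemma~\ref{t:summation_1_A}. As an alternative to the exponential formula, one can argue exactly as in the proof of Lemma~\ref{t:summation_2_A}, classifying $z$ by the refined cycle type recording, for each $\ell$ and each admissible $e$, the number of underlying $\ell$-cycles of the corresponding class; the resulting multinomial sum factors into the same product of exponentials.
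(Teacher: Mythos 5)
Your proof is correct, and it takes a somewhat different route from the paper's. The paper proves this lemma in two stages: it first establishes Lemma~\ref{t:summation_2_A}, which handles a fixed output cycle length $j$ (classifying elements of $R_{i,j}(m)$ by their refined cycle type and recognizing the resulting multinomial sum as a coefficient in a product of exponentials), and then proves Lemma~\ref{t:summation_3_A} by splitting the $m$ cycles of $x_i(m)$ into groups according to $j$, multiplying the per-$j$ exponential generating functions, and performing the substitution $s \mapsto s\,t_j^i$ to track the $t$-weights via the identity $j\,m_j(z) = i\,m_j$. You instead bypass Lemma~\ref{t:summation_2_A} entirely and apply the exponential formula once, at the level of single underlying cycles: you make explicit the factorization $\omega^x(z) = \prod_c \chi(\gamma_c)$ over cycles of the underlying permutation (valid since $G_i$ is abelian and $\omega^x$ is trivial on the wreathing symmetric group), attach the per-cycle $t$-weight $t_{\ell e}^{i\ell}$ via Corollary~\ref{t:cycles2_A} and the identity $jd = i\ell$, sum fiber data with Lemma~\ref{t:summation_1_A} to get the factor $i^{\ell-1}\mu(e)$, and reindex $(\ell,e) \mapsto (j,e)$ with $j = \ell e$, which correctly recovers the exponent since $(is\,t_j^i)^{j/e}/(ij/e) = i^{\ell-1}s^\ell t_j^{i\ell}/\ell$. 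Your bookkeeping is right at every step, including the exponential-formula normalization $(\ell-1)!/\ell! = 1/\ell$ and the observation that $e \mid i$ together with $j = \ell e$ is equivalent to $e \in E(i,j)$ with $\ell = j/e$. What your route buys is a shorter, self-contained derivation that makes the per-cycle weight transparent and merges the paper's Lemmas~\ref{t:summation_2_A} and~\ref{t:summation_3_A} into one application of the exponential formula; what the paper's two-stage organization buys is modularity, since the intermediate lemma is reused verbatim in the signed setting (Lemmas~\ref{t:summation_2_A_signed} and~\ref{t:summation_3_A_signed}), where your argument would need the per-cycle sign $(-1)^{i(j-1)/e}$ threaded through the same computation. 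Your closing remark correctly identifies the paper-style refined-cycle-type classification as the alternative.
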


\begin{proof}
    Following Lemma~\ref{t:summation_2_A}, fix integers $m_{j} \ge 0$ $(j \ge 1)$ such that 
    $\sum_{j} m_{j} = m$. 
    Dividing the set of $m$ cycles of $x_{i}(m)$ into subsets of corresponding sizes $m_{j}$ can be done in
    \[
        \frac{m!}{\prod_{j} m_{j}!}
    \]
    ways. On each piece $G_{i} \wr S_{m_{j}}$ we would like to consider $R_{i,j}(m_{j})$, as in Lemma~\ref{t:summation_2_A}; note that, by that result, $R_{i,j}(m_{j,\theta}) = \varnothing$ unless $j$ divides $i m_{j}$.
    By the linearity of $\omega^x$,
    \[
        \sum_{z \in R_{i}(m)} \omega^x(z)
        = \sum_{\substack{m_{j} \ge 0 \\ \sum\limits_{j} m_{j} = m}}
        \frac{m!}{\prod_{j} m_{j}!}
        \prod_{j} 
        \sum_{z_{j} \in R_{i,j}(m_{j})} \omega^x(z_{j}) 
    \]
    or, equivalently,
    \[ 
        \sum_{m \ge 0} \sum_{z \in R_{i}(m)} \omega^x(z) \,
        \frac{s^m}{m!} 
        = \prod_{j}         
        \sum_{m_{j} \ge 0}
        \sum_{z_{j} \in R_{i,j}(m_{j})} \omega^x(z_{j}) \,
        \frac{s^{m_{j}}}{m_{j}!} .
    \]
    Assume now that $z \in R_{i}(m)$, as an element of $S_{im}$, is a product of $m_{j}(z)$ disjoint cycles of length $j$ $(j \ge 1)$.
    This yields a subdivision of the set of $m$ cycles (of length $i$ each) of $x_{i}(m)$ into subsets of sizes $m_{j} = j m_{j}(z)/i$, so that
    \[
        \sum_{j} j m_{j}(z) 
        = \sum_{j} i m_{j} 
        = i m.
    \]
    In order to keep track of the individual numbers $j m_{j}(z) = i m_{j}$, let us use additional indeterminates $t_{j}$ $(j \ge 1)$. The previous formula turns into
    \[ 
        \sum_{m \ge 0} \sum_{z \in R_{i}(m)} \omega^x(z) \,
        \frac{s^m}{m!} 
        \prod_{j} t_{j}^{j m_{j}(z)}
        = \prod_{j}         
        \sum_{m_{j} \ge 0}
        \sum_{z_{j} \in R_{i,j}(m_{j})} \omega^x(z_{j}) \,
        \frac{s^{m_{j}} t_{j}^{i m_{j}}}{m_{j}!} .
    \]
    Rewriting the RHS using Lemma~\ref{t:summation_2_A}, with $s$ replaced by $s t_{j}^i$, yields
    \begin{align*}
        \sum_{m \ge 0} 
        \sum_{z \in R_{i}(m)} 
        \omega^x(z) \,
        \frac{s^m}{m!}
        \prod_{j} t_{j}^{j m_{j}(z)} 
        &= \prod_{j} \exp \left(
        \sum_{e \in E(i,j)}  
        \mu(e) \,
        \frac{(is t_{j}^i)^{j/e}}{ij/e} 
        \right) \\
        &= \exp \left( 
        \sum_{j} \sum_{e \in E(i,j)} 
        \mu(e) \,
        \frac{(is t_{j}^i)^{j/e}}{ij/e} 
        \right) ,
    \end{align*}
    as claimed.
\end{proof}

Finally, let $s_{i}$ $(i \ge 1)$ and $t_{j}$ $(j \ge 1)$ be two countable sets of indeterminates, denoted succinctly by $\bs$ and $\bt$.
Consider the ring of formal power series $\CC[[\bs,\bt]]$.
For $\lambda \vdash n$ with $a_{i}$ parts of length $i$ $(i \ge 1)$, denote
\[
    \bs^{c(\lambda)} := \prod_{i} s_{i}^{a_{i}} . 
\]
Similarly, for $\nu \vdash n$ with $b_{j}$ parts of length $j$ $(j \ge 1)$, denote 
\[
    \bt^{c(\nu)} := \prod_{j} t_{j}^{b_{j}} . 
\]
Recalling the higher Lie characters $\psi_{S_n}^\lambda$ from Definition~\ref{def:HLC_A}(c), we can now prove the main result of this section, Theorem~\ref{t:summation_4_A}.
It extends Lemma~\ref{t:summation_3_A} and computes the sum in Lemma~\ref{t:HLC_values} on the whole set $C_g \cap Z_x$, for arbitrary $g$ and $x$.

\begin{proof}[Proof of Theorem~\ref{t:summation_4_A}]
    Write any $x \in S_n$ as 
    \[
        x = \prod_{i} x_{i} \,,
    \]
    where each $x_{i}$ is a product of $a_{i}$ disjoint cycles of length $i$.
    Then, by Lemma~\ref{t:centralizer_in_Sn},
    \[
        Z_x \cong \bigtimes_{i} Z_{x_{i}}
    \]
    where
    \[
        Z_{x_{i}} \cong G_{i} \wr S_{a_{i}}.
    \]
    By Lemma~\ref{t:summation_3_A}, with summation over $m = a_{i} \ge 0$ and $z = z_{i} \in G_{i} \wr S_{a_{i}}$, we have for each $i \ge 1$:
    \[
        \sum_{a_{i} \ge 0} 
        \sum_{z_{i} \in G_{i} \wr S_{a_{i}}} \omega^{x_{i}}(z_{i}) \,
        \frac{s^{a_{i}}}{a_{i}!}
        \prod_{j} t_{j}^{j m_{j}(z_{i})}
        = \exp \left( 
        \sum_{j} \sum_{e \in E(i,j)}  
        \mu(e) \,
        \frac{(is t_{j}^i)^{j/e}}{ij/e} 
        \right) . 
    \]
    Now replace 
    $s$ by $s_{i}/i$ and $t_j^j$ by $t_j$ $(\forall j \ge 1)$. 
    Denote
    \begin{align*}
        \Sigma_{i}
        &:= \sum_{a_{i} \ge 0} 
        \sum_{z_{i} \in G_{i} \wr S_{a_{i}}} \omega^{x_{i}} (z_{i}) \,
        \frac{s_{i}^{a_{i}}}{a_{i}! \, i^{a_{i}}}
        \prod_{j} t_{j}^{m_{j}(z_{i})} \\
        &= \exp \left( 
        \sum_{j} \sum_{e \in E(i,j)} 
        \mu(e) \,
        \frac{s_{i}^{j/e} t_{j}^{i/e}}{ij/e} 
        \right) . 
    \end{align*}
    The product of $\Sigma_{i}$ over all $i$ is therefore
    \begin{align*}
        &\ \prod_{i} \sum_{a_{i} \ge 0} 
        \sum_{z_{i} \in Z_{x_{i}}} \omega^{x_{i}}(z_{i}) \, 
        \frac{s_{i}^{a_{i}}}{a_{i}! \, i^{a_{i}}}
        \prod_{j} t_{j}^{m_{j}(z_{i})} \\
        &= \prod_{i} \Sigma_{i} 
        = \exp \left( 
        \sum_{i} \sum_{j} \sum_{e \in E(i,j)}  
        \mu(e) \,
        \frac{s_{i}^{j/e} t_{j}^{i/e}}{ij/e} 
        \right) .
    \end{align*}
    If $x = \prod_{i} x_{i} \in C_\lambda$ then
    \[
        |C_\lambda| 
        = |C_x|
        = \frac{|S_n|}{|Z_x|} 
        = \frac{|S_n|}{\prod_{i} a_{i}!\, i^{a_{i}}} .
    \]
    The above equality can thus be written as
    \begin{align*}
        &\ \sum_{n \ge 0} \frac{1}{|S_n|} 
        \sum_{\lambda \vdash n} \sum_{x \in C_\lambda} 
        \prod_{i} \left( 
        \sum_{z_{i} \in Z_{x_{i}}} 
        \omega^{x_{i}}(z_{i})  
        s_{i}^{a_{i}} \prod_{j} t_{j}^{m_{j}(z_{i})} 
        \right) \\
        &= \exp \left( 
        \sum_{i} \sum_{j} \sum_{e \in E(i,j)}  
        \mu(e) \,
        \frac{s_{i}^{j/e} t_{j}^{i/e}}{ij/e} 
        \right) .
    \end{align*}
    Denote $z := \prod_{i} z_{i} \in Z_x$, and note that
    \[
        \sum_{i} m_{j}(z_{i}) = b_{j}(z)
        \qquad (\forall j \ge 1) .
    \]
    By the definition of $\omega^x$, the LHS of the equality can thus be written as
    \[
        \text{LHS} 
        = \sum_{n \ge 0} \frac{1}{|S_n|} 
        \sum_{\lambda \vdash n} \sum_{x \in C_\lambda} \sum_{z \in Z_x} 
        \omega^x(z) \bs^{c(x)} \bt^{c(z)} ,
    \]
    where 
    \[
        \bs^{c(x)} := \prod_{i} s_{i}^{a_{i}}, \quad
        \bt^{c(z)} := \prod_{j} t_{j}^{b_{j}} .
    \]
    In fact, we can rewrite this as
    \[
        \text{LHS} 
        = \sum_{n \ge 0} \frac{1}{|S_n|} 
        \sum_{\lambda \vdash n} \sum_{\nu \vdash n} \sum_{x \in C_\lambda} \sum_{z \in Z_x \cap C_\nu} 
        \omega^x(z) 
        \bs^{c(\lambda)} \bt^{c(\nu)} ,
    \]
    since $c(x)$ depends only on the conjugacy class of $x$, and may thus be written as $c(\lambda)$; and similarly for $c(z)$ and $c(\nu)$.
    
    Now, by Lemma~\ref{t:HLC_values}, if $x \in C_\lambda$ then
    \[
        |C_\lambda| \cdot \sum_{z \in Z_x \cap C_\nu} \omega^x(z) 
        = |C_\nu| \cdot \psi_{S_n}^x(\nu) .
    \]
    Therefore
    \begin{align*}
        \text{LHS} 
        &= \sum_{n \ge 0} \frac{1}{|S_n|} 
        \sum_{\lambda \vdash n} \sum_{\nu \vdash n} 
        |C_\nu| \, \psi_{S_n}^\lambda(\nu) \,
        \bs^{c(\lambda)} \bt^{c(\nu)} \\
        &= \sum_{n \ge 0}  
        \sum_{\lambda \vdash n} \sum_{\nu \vdash n} 
        \psi_{S_n}^\lambda(\nu) \,
        \frac{\bs^{c(\lambda)} \bt^{c(\nu)}}{|Z_\nu|} .
    \end{align*}
    Regarding the RHS, note that $E(i,j)$ is the set of all common divisors of $i$ and $j$, namely divisors of $\gcd(i,j)$. We can therefore write the above equality as 
    \begin{align*}
        \sum_{n \ge 0}  
        \sum_{\lambda \vdash n} \sum_{\nu \vdash n} 
        \psi_{S_n}^\lambda(\nu) \,
        \frac{\bs^{c(\lambda)} \bt^{c(\nu)}}{|Z_\nu|} 
        &= \exp \left( 
        \sum_{i} \sum_{j} \sum_{e | \gcd(i,j)}  
        \mu(e) \,
        \frac{s_{i}^{j/e} t_{j}^{i/e}}{ij/e} 
        \right) .
    \end{align*}
    This completes the proof.
\end{proof}

\section{A signed version}
\label{sec:gf_hLc_A_signed}

In this section we outline the proof of the following signed analogue of Theorem~\ref{t:summation_4_A}.

\begin{theorem}\label{t:summation_4_A_signed}
    \begin{align*}
        \sum_{n \ge 0}  
        \sum_{\lambda \vdash n} \sum_{\nu \vdash n} 
        \sign(\nu) \psi_{S_n}^\lambda(\nu) \,
        \frac{\bs^{c(\lambda)} \bt^{c(\nu)}}{|Z_\nu|} 
        &= \exp \left( \sum_{i} \sum_{j} \sum_{e | \gcd(i,j)}
        (-1)^{i(j-1)/e} \mu(e) \,
        \frac{s_{i}^{j/e} t_{j}^{i/e}}{ij/e} 
        \right) ,
    \end{align*}
    where $\mu(\cdot)$ is the classical M\"obius function.
\end{theorem}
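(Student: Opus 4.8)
The plan is to adapt the proof of Theorem~\ref{t:summation_4_A}, inserting the sign character $\sign(\nu)$ and tracking exactly where it introduces factors of $(-1)$. The key observation is that for a permutation $z \in S_n$ of cycle type $\nu$, the sign is $\sign(z) = \prod_j (-1)^{(j-1) b_j}$, since a $j$-cycle is an even/odd permutation according to the parity of $j-1$. The sign is therefore multiplicative over cycles, which means it factors cleanly through the same decompositions ($Z_x \cong \bigtimes_i G_i \wr S_{a_i}$, then the refinement into pieces $R_{i,j}$) used in the unsigned proof. So I would re-run Lemmas~\ref{t:summation_1_A}, \ref{t:summation_2_A}, \ref{t:summation_3_A} with $\omega^x(z)$ replaced by $\sign(z)\,\omega^x(z)$ and check that $\sign$ contributes a predictable weight to each cycle.

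First I would pin down the sign contribution of a single cycle $c$ at the level of $S_n$. By Corollary~\ref{t:cycles2_A}, a cycle $c$ that sits in $G_i \wr S_{a_i}$ as a cycle of length $\ell = j/e$ with class $\gamma = \zeta_i^k$, $\gcd(k,i)=d=i/e$, becomes, as an element of $S_{i\ell} = S_{jd}$, a product of $d$ disjoint cycles each of length $j$. Hence the sign of $c$ in $S_{jd}$ is $(-1)^{d(j-1)} = (-1)^{(i/e)(j-1)}$. This is precisely the factor $(-1)^{i(j-1)/e}$ appearing in the theorem, and crucially it depends only on $i$, $j$, and $e$ — not on $k$ or on $\ell$ beyond the value of $e$. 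Therefore the signed analogue of the single-cycle sum in Lemma~\ref{t:summation_1_A} becomes
\[
    \frac{1}{i^{\ell-1}} \sum_{z \in R_{i,j}(\sigma,e)} \sign(z)\,\omega^x(z)
    = (-1)^{i(j-1)/e}\,\mu(e),
\]
since the sign factors out of the sum as a constant depending only on $e$ (and $i,j$), while the remaining sum over $G_i$-classes still yields $\mu(e)$ exactly as before.

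With this single-cycle result in hand, the remaining steps carry over mechanically. In the signed analogue of Lemma~\ref{t:summation_2_A}, each cycle of type $\ell_k = j/e_k$ now contributes $(-1)^{i(j-1)/e_k}\,\mu(e_k)$ in place of $\mu(e_k)$, so each exponential factor in the product picks up the corresponding sign and the generating function becomes $\exp\bigl(\sum_{e \in E(i,j)} (-1)^{i(j-1)/e}\mu(e)\,(is)^{j/e}/(ij/e)\bigr)$. Because $\sign$ is multiplicative over the block decomposition into the various cycle-length pieces and over the factors $Z_{x_i}$, the passages through Lemma~\ref{t:summation_3_A} and the final assembly in the proof of Theorem~\ref{t:summation_4_A} go through verbatim, with the single substitution $\mu(e) \mapsto (-1)^{i(j-1)/e}\mu(e)$ throughout. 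Summing over $\nu$ then produces $\sum_{\nu \vdash n} \sign(\nu)\psi_{S_n}^\lambda(\nu)$ on the left, because $\sign(\nu)$ is exactly the product of the per-cycle signs.

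I do not expect a serious obstacle: the argument is essentially a weighted version of the unsigned one, and the only genuine content is the sign computation for a single cycle above. The one point demanding care is verifying that the per-cycle sign $(-1)^{i(j-1)/e}$ is consistent with the global identity $\sign(z) = \prod_j (-1)^{(j-1)b_j}$ — that is, confirming that summing these per-cycle exponents over all cycles of $z$ (as elements of $S_n$) reproduces the total sign, so that the sign weight distributes correctly over the independent summations in each $G_i \wr S_{a_i}$ block. This is a bookkeeping check rather than a new idea, and it reduces to the elementary fact that the exponent $i(j-1)/e = d(j-1)$ counts the parity contribution of the $d$ length-$j$ cycles produced by $c$. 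Given that this excerpt explicitly says it only \emph{outlines} the proof, I would present the single-cycle sign lemma in full and then indicate that the rest of Section~\ref{sec:gf_hLc_A} applies \emph{mutatis mutandis}.
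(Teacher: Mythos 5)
Your proposal is correct and follows essentially the same route as the paper, which likewise proves the theorem by re-running the unsigned argument with $\omega^x(z)$ replaced by $\sign(z)\,\omega^x(z)$ and only restates the signed analogues of Lemmas~\ref{t:summation_1_A}--\ref{t:summation_3_A}; your single-cycle computation $\sign(c) = (-1)^{d(j-1)} = (-1)^{i(j-1)/e}$, which factors out of the sum over $G_i$-classes to give $K^-_{i,j}(\sigma,e) = (-1)^{i(j-1)/e}\mu(e)$, is exactly the content of the paper's Lemma~\ref{t:summation_1_A_signed}. Your consistency check that the per-cycle signs multiply to $\sign(z) = \prod_j (-1)^{(j-1)b_j}$ is the right (and only) point needing verification for the remaining steps to carry over verbatim.
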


The proof is very similar to the proof of Theorem~\ref{t:summation_4_A}, described in Section~\ref{sec:gf_hLc_A}. 
We state only the slightly different main lemmas, without proof.

Here is a signed analogue of Lemma~\ref{t:summation_1_A}.

\begin{lemma}\label{t:summation_1_A_signed}
    Fix $i,j \ge 1$, and let $x = x_{i} \in S_{i a_{i}}$ be a product of $a_{i}$ cycles of length $i$.
    Let $e$ be a common divisor of $i$ and $j$, and denote $d := i/e$ and $\ell := j/e$.
    Let $\sigma \in S_{a_{i}}$ be a permutation which has a single cycle of length $\ell$, and is the identity outside the support of this cycle.
    Let $R_{i,j}(\sigma,e)$ be the set of all the elements $z \in Z_x \cong G_{i} \wr S_{a_{i}}$ corresponding to the underlying permutation (cycle) $\sigma$, with suitable $G_{i}$-classes, such that, as elements of $S_{i \ell} = S_{j d}$, they are products of $d$ disjoint cycles of length $j$. 
    Denote
    \[
        K_{i,j}^-(\sigma,e) 
        := \frac{1}{i^{\ell - 1}} \sum_{z \in R_{i,j}(\sigma,e)} \sign(z) \omega^x(z) \, .
    \]    
    Then
    \[
        K_{i,j}^-(\sigma.e) = (-1)^{i(j-1)/e} \mu(e).
    \]
\end{lemma}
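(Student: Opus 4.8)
The plan is to deduce the signed identity directly from its unsigned counterpart, Lemma~\ref{t:summation_1_A}, by showing that $\sign(z)$ is \emph{constant} as $z$ ranges over the summation set $R_{i,j}(\sigma,e)$, and then pulling this constant out of the sum. Concretely, I would first invoke Corollary~\ref{t:cycles2_A}: by the very definition of $R_{i,j}(\sigma,e)$, every element $z$ of this set, regarded as an element of $S_{i\ell} = S_{jd}$, is a product of exactly $d$ disjoint cycles of length $j$. Hence all elements of $R_{i,j}(\sigma,e)$ share one and the same cycle type $(j^d)$ in $S_{jd}$, and therefore the same sign. Since $z$ fixes every point outside its support, this sign is unchanged when $z$ is viewed inside the ambient symmetric group $S_{ia_i}$.

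Next I would compute this common value. A single $j$-cycle has sign $(-1)^{j-1}$, so a product of $d$ disjoint $j$-cycles has
\[
    \sign(z) = \left( (-1)^{j-1} \right)^{d} = (-1)^{d(j-1)} = (-1)^{i(j-1)/e},
\]
where the last equality uses $d = i/e$. Crucially, this does not depend on the particular $z \in R_{i,j}(\sigma,e)$, so it factors out of the summation:
\[
    K_{i,j}^-(\sigma,e)
    = \frac{1}{i^{\ell-1}} \sum_{z \in R_{i,j}(\sigma,e)} \sign(z)\, \omega^x(z)
    = (-1)^{i(j-1)/e} \cdot \frac{1}{i^{\ell-1}} \sum_{z \in R_{i,j}(\sigma,e)} \omega^x(z)
    = (-1)^{i(j-1)/e}\, K_{i,j}(\sigma,e).
\]
Applying Lemma~\ref{t:summation_1_A} to replace $K_{i,j}(\sigma,e)$ by $\mu(e)$ then yields the asserted formula $K_{i,j}^-(\sigma,e) = (-1)^{i(j-1)/e}\mu(e)$.

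There is no deep obstacle here: the entire content of the lemma is the observation that the $\sign$ factor is constant on $R_{i,j}(\sigma,e)$, which is exactly what makes the reduction to the unsigned case work. The only points demanding care are the correct reading of Corollary~\ref{t:cycles2_A} (to conclude that all these $z$ have the common cycle type $(j^d)$ in $S_{jd}$) and the elementary parity bookkeeping $(-1)^{d(j-1)} = (-1)^{i(j-1)/e}$. This is also precisely the step where the extra sign in the exponent of Theorem~\ref{t:summation_4_A_signed}, relative to Theorem~\ref{t:summation_4_A}, is introduced, so I would take some care that it propagates unchanged through the signed analogues of Lemmas~\ref{t:summation_2_A} and~\ref{t:summation_3_A}.
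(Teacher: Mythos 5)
Your proposal is correct, and it is essentially the argument the paper intends: the paper states Lemma~\ref{t:summation_1_A_signed} without proof, remarking only that the signed lemmas are proved like their unsigned counterparts, and the natural such proof is exactly your reduction. Indeed, by the very definition of $R_{i,j}(\sigma,e)$ every $z$ in it has cycle type $(j^d)$ on its support (fixed points elsewhere do not affect the sign), so $\sign(z) = (-1)^{d(j-1)} = (-1)^{i(j-1)/e}$ is constant, factors out of the sum, and Lemma~\ref{t:summation_1_A} gives $K_{i,j}^-(\sigma,e) = (-1)^{i(j-1)/e}\mu(e)$.
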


Here is a signed analogue of Lemma~\ref{t:summation_2_A}.

\begin{lemma}\label{t:summation_2_A_signed}
    Fix $i,j \ge 1$.
    For any integer $m \ge 0$, let $x = x_{i}(m) \in S_{im}$ be a product of $m$ disjoint cycles of length $i$.
    Let $R_{i,j}(m)$ be the set of all elements $z \in Z_{S_{im}}(x_{i}(m)) \cong G_{i} \wr S_m$ which, as elements of $S_{im}$, are products of $im/j$ disjoint cycles of length $j$.
    (Of course, necessarily $j$ divides $im$.)
    Then, in $\CC[[s]]$,
    \[
        \sum_{m \ge 0} 
        \sum_{z \in R_{i,j}(m)} \sign(z) \omega^x(z) 
        \frac{s^m}{m!}
        = \exp \left( \sum_{e \in E(i,j)}
        (-1)^{i(j-1)/e} \mu(e) \,
        \frac{(is)^{j/e}}{ij/e} \right) .
    \]
\end{lemma}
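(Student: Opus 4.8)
The plan is to follow the proof of Lemma~\ref{t:summation_2_A} essentially verbatim, inserting $\sign$ wherever the character value $\omega^x(z)$ appeared and tracking the single extra factor it contributes. The structural fact I would invoke first is that $\sign$ is multiplicative over the cycles of the underlying permutation of $z \in G_i \wr S_m$: distinct cycles of $z$ in $S_m$ have disjoint block-supports in $S_{im}$, so the sign of $z$, computed in $S_{im}$, is the product of the signs of its individual cycles viewed as elements of $S_{im}$. This is precisely the multiplicativity needed for the sign to pass through the exponential-formula bookkeeping, in the same manner as $\omega^x$ did in the unsigned case.

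Next I would record the per-cycle contribution. By Lemma~\ref{t:summation_1_A_signed}, for a single cycle $\sigma$ of length $\ell = j/e$ in $S_{a_i}$ one has
\[
    \frac{1}{i^{\ell-1}} \sum_{z \in R_{i,j}(\sigma,e)} \sign(z)\,\omega^x(z) = (-1)^{i(j-1)/e}\,\mu(e).
\]
The point worth stressing is that the new factor $(-1)^{i(j-1)/e}$ is genuinely constant on $R_{i,j}(\sigma,e)$: by Corollary~\ref{t:cycles2_A}, every such $z$ is, as an element of $S_{i\ell} = S_{jd}$ with $d = i/e$, a product of $d$ disjoint cycles of length $j$, hence has sign $(-1)^{jd-d} = (-1)^{d(j-1)} = (-1)^{i(j-1)/e}$, independently of the particular $G_i$-classes chosen. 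Thus $\sign$ can be pulled out of the inner sum and combined cleanly with the M\"obius value supplied by Lemma~\ref{t:summation_1_A}.

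With this in hand, I would repeat the counting step of Lemma~\ref{t:summation_2_A}: writing $E(i,j) = \{e_1,\ldots,e_q\}$ and $\ell_k = j/e_k$, decompose $R_{i,j}(m)$ according to the tuple $(n_1,\ldots,n_q) \in N_{i,j}(m)$ recording the number $n_k$ of underlying cycles of length $\ell_k$, multiply by the number $m!/\prod_k n_k!\,\ell_k^{n_k}$ of permutations with that cycle structure, and apply the per-cycle evaluation above. The only change relative to the unsigned computation is that each $\mu(e_k)$ is now accompanied by $(-1)^{i(j-1)/e_k}$; everything else — the factorial cancellations and the passage to a product of exponentials via the exponential formula — is identical, yielding
\[
    \sum_{m \ge 0} \sum_{z \in R_{i,j}(m)} \sign(z)\,\omega^x(z)\,\frac{s^m}{m!} = \exp \left( \sum_{e \in E(i,j)} (-1)^{i(j-1)/e}\,\mu(e)\,\frac{(is)^{j/e}}{ij/e} \right).
\]

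I do not expect a genuine obstacle here, since the argument is a sign-decorated copy of an already-established computation. The one place demanding care is the verification that the sign factor is constant on each $R_{i,j}(\sigma,e)$ and depends only on $e$ (and not on $\sigma$ or the individual $G_i$-class), as this is what allows the sign to be absorbed into the exponent rather than interfering with the summation over classes. Once that constancy is confirmed via Corollary~\ref{t:cycles2_A}, the remainder is routine bookkeeping.
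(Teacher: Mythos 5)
Your proof is correct and is exactly the argument the paper intends: the paper states Lemma~\ref{t:summation_2_A_signed} without proof, remarking that it parallels the unsigned case, and you carry out precisely that parallel --- invoking Lemma~\ref{t:summation_1_A_signed} per cycle and rerunning the exponential-formula bookkeeping of Lemma~\ref{t:summation_2_A}. Your extra verification, via Corollary~\ref{t:cycles2_A}, that the sign $(-1)^{i(j-1)/e}$ is constant on each $R_{i,j}(\sigma,e)$ and multiplicative over disjoint-support cycles is exactly the point that makes the analogy go through.
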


Here is a signed analogue of Lemma~\ref{t:summation_3_A}.

\begin{lemma}\label{t:summation_3_A_signed}
    Fix $i \ge 1$.
    For any integer $m \ge 0$, let $x = x_{i}(m) \in S_{im}$ be a product of $m$ disjoint cycles of length $i$. Let $R_{i}(m) := Z_{S_{im}(x_{i}(m))} \cong G_{i} \wr S_m$. 
    As an element of $S_{im}$, write each $z \in R_{i}(m)$ as a product of $m_{j}(z)$ disjoint cycles of length $j$ $(j \ge 1)$.
    Then, in $\CC[[s,\bt]]$,
    \[
        \sum_{m \ge 0} 
        \sum_{z \in R_{i}(m)} 
        \sign(z) \omega^x(z) \, 
        \frac{s^m}{m!}
        \prod_{j} t_{j}^{j m_{j}(z)}
        = \exp \left( 
        \sum_{j} \sum_{e \in E(i,j)}  
        (-1)^{i(j-1)/e} \mu(e) \,
        \frac{(is t_{j}^i)^{j/e}}{ij/e} 
        \right) .
    \]
\end{lemma}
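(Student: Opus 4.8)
The plan is to mirror the proof of the unsigned counterpart, Lemma~\ref{t:summation_3_A}, essentially verbatim, carrying the extra factor $\sign(z)$ through every step and invoking Lemma~\ref{t:summation_2_A_signed} in place of Lemma~\ref{t:summation_2_A} at the final substitution. The whole argument rests on a single structural observation, which is the only genuinely new ingredient compared to the unsigned case: the sign character factors across the block decomposition of $z$, exactly as the character $\omega^x$ does.

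First I would fix nonnegative integers $m_j$ with $\sum_j m_j = m$ and, as in the proof of Lemma~\ref{t:summation_3_A}, partition the $m$ cycles of length $i$ of $x_i(m)$ into blocks of sizes $m_j$ (in $m!/\prod_j m_j!$ ways), the $j$-th block being chosen so that the corresponding factor $z_j$ of $z$ lies in $R_{i,j}(m_j)$, i.e.\ it is a product of disjoint $j$-cycles as an element of $S_{i m_j}$. Since the $z_j$ are supported on pairwise disjoint subsets of $[im]$, both $\omega^x$ and $\sign$ factor: the multiplicativity of $\omega^x$ used in the unsigned proof gives $\omega^x(z) = \prod_j \omega^x(z_j)$, and, because permutations on disjoint supports multiply with multiplicative sign, $\sign(z) = \prod_j \sign(z_j)$. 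Hence $\sign(z)\,\omega^x(z) = \prod_j \sign(z_j)\,\omega^x(z_j)$, so the linearity computation of the unsigned proof carries over with integrand $\sign(z)\,\omega^x(z)$ in place of $\omega^x(z)$. Recording cycle-length data with the indeterminates $t_j$ and using the relation $j\,m_j(z) = i\,m_j$ (the $j$-th block supplies $i m_j$ letters, regrouped by $z_j$ into $i m_j/j$ cycles of length $j$), the factorization yields
\[
    \sum_{m \ge 0} \sum_{z \in R_i(m)} \sign(z)\,\omega^x(z)\,
    \frac{s^m}{m!} \prod_j t_j^{j m_j(z)}
    = \prod_j \sum_{m_j \ge 0} \sum_{z_j \in R_{i,j}(m_j)}
    \sign(z_j)\,\omega^x(z_j)\,
    \frac{s^{m_j} t_j^{i m_j}}{m_j!}.
\]
Applying Lemma~\ref{t:summation_2_A_signed} to each factor, with $s$ replaced by $s\,t_j^i$, rewrites the $j$-th factor as $\exp\bigl(\sum_{e \in E(i,j)} (-1)^{i(j-1)/e}\mu(e)\,(is t_j^i)^{j/e}/(ij/e)\bigr)$; taking the product over $j$ then converts the product of exponentials into the exponential of the sum over $j$, producing exactly the claimed formula.

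The only point requiring care — the \emph{main obstacle}, though a mild one — is the multiplicativity $\sign(z) = \prod_j \sign(z_j)$ across the decomposition. This holds precisely because the factors $z_j$ have pairwise disjoint supports in $[im]$, so they commute and their cycle sets are disjoint; the sign of such a product is the product of the signs. Once this is in place, no part of the unsigned argument needs alteration beyond substituting Lemma~\ref{t:summation_2_A_signed} for Lemma~\ref{t:summation_2_A} and tracking the sign, and the signed exponent $(-1)^{i(j-1)/e}\mu(e)$ propagates directly from Lemma~\ref{t:summation_2_A_signed} into the final expression.
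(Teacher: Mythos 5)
Your proof is correct and is precisely the argument the paper intends: the paper omits the proof of Lemma~\ref{t:summation_3_A_signed}, stating only that it parallels the unsigned case, and your write-up carries out exactly that parallel — the block decomposition from the proof of Lemma~\ref{t:summation_3_A}, with Lemma~\ref{t:summation_2_A_signed} substituted for Lemma~\ref{t:summation_2_A}. Your identification of the one new ingredient, the multiplicativity $\sign(z) = \prod_j \sign(z_j)$ across the disjointly supported factors $z_j$, is also the right (and only) point needing verification.
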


\section{Proofs of Theorems~\ref{t:odd_cycles} and~\ref{t:even_cycles}}
\label{sec:proofs}



\begin{proof}[Proof of Theorem~\ref{t:odd_cycles}]
    Use Theorem~\ref{t:summation_4_A} with the substitution
    \[
        s_i :=
        \begin{cases}
            1, &\text{if $i$ is odd;} \\
            0, &\text{if $i$ is even}
        \end{cases}
    \]
    to get
    \begin{align*}
        \sum_{n \ge 0}  
        \sum_{\lambda \in \OP(n)} 
        \sum_{\nu \vdash n} 
        \psi_{S_n}^\lambda(\nu) \,
        \frac{\bt^{c(\nu)}}{|Z_\nu|} 
        &= \exp \left( \sum_{i \text{ odd}} \sum_{j \ge 1} \sum_{e | \gcd(i,j)}
        \mu(e) \,
        \frac{t_{j}^{i/e}}{ij/e} \right) .
    \end{align*}
    Replacing (in the exponent) summation over $i$ (odd) and divisors $e$ by summation over $e$ (odd) and $d := i/e$ (odd),
    it follows that
    \begin{align*}
        \sum_{n \ge 0}  
        \sum_{\lambda \in \OP(n)} 
        \sum_{\nu \vdash n} 
        \psi_{S_n}^\lambda(\nu) \,
        \frac{\bt^{c(\nu)}}{|Z_\nu|} 
        &= \exp \left( \sum_{j \ge 1} 
        \sum_{\substack{e \,|\, j \\ e \text{ odd}}} \mu(e) \,
        \sum_{d \text{ odd}}
        \frac{t_{j}^{d}}{dj} \right) .
    \end{align*}
    Writing $j = 2^p \cdot (2q+1)$ for integers $p, q \ge 0$, it is clear that
    \[
        \sum_{\substack{e \,|\, j \\ e \text{ odd}}} \mu(e) 
        = \sum_{e \,|\, (2q+1)} \mu(e) 
        = \delta_{q,0}.
    \]
    Therefore necessarily $j = 2^p$, and using
    \[
        \sum_{d \text{ odd}} \frac{x^{d}}{d} 
        = \frac{1}{2} \left( \ln(1+x) - \ln(1-x) \right) 
    \]
    it follows that
    \begin{align*}
        \sum_{n \ge 0}  
        \sum_{\lambda \in \OP(n)} 
        \sum_{\nu \vdash n} 
        \psi_{S_n}^\lambda(\nu) \,
        \frac{\bt^{c(\nu)}}{|Z_\nu|} 
        &= \exp \left( \sum_{p \ge 0} \sum_{d \text{ odd}}
        \frac{t_{2^p}^{d}}{2^p d} \right) 
        = \prod_{p \ge 0} 
        \left( \frac{1 + t_{2^p}}{1 - t_{2^p}} \right)^{1/2^{p+1}} .
    \end{align*}
\end{proof}




\begin{proof}[Proof of Theorem~\ref{t:even_cycles}]
    We shall deal separately with summation over even $n$ and summation over odd $n$.
    For even $n$, since $\EP(n)$ consists of the partitions of $n$ with even parts only,
    we can use Theorem~\ref{t:summation_4_A_signed} with the substitution
    \[
        s_i :=
        \begin{cases}
            1, &\text{if $i$ is even;} \\
            0, &\text{if $i$ is odd}
        \end{cases}
    \]
    to get
    \begin{align*}
        \sum_{n \ge 0 \text{ even}}  
        \sum_{\lambda \in \EP(n)} 
        \sum_{\nu \vdash n} 
        \sign(\nu) \psi_{S_n}^\lambda(\nu) \,
        \frac{
        \bt^{c(\nu)}}{|Z_\nu|} 
        &= \exp \left( 
        \sum_{i \text{ even}} \sum_{j \ge 1} \sum_{e | \gcd(i,j)}
        (-1)^{i(j-1)/e} \mu(e) \,
        \frac{t_{j}^{i/e}}{ij/e} \right) .
    \end{align*}
    Replace double summation over $i$ (even) and divisors $e$ by double summation over $e$ and $d := i/e$.
    Distinguishing the cases of $e$ odd (thus $d$ even) and $e$ even (thus $d$ of arbitrary parity), it follows that
    \begin{align*}
        \sum_{n \ge 0 \text{ even}}  
        \sum_{\lambda \in \EP(n)} 
        \sum_{\nu \vdash n} 
        \sign(\nu) \psi_{S_n}^\lambda(\nu) \,
        \frac{
        \bt^{c(\nu)}}{|Z_\nu|} 
        &= \exp \left( 
        \sum_{j \ge 1} 
        \sum_{\substack{e \,|\, j \\ e \text{ odd}}} \mu(e) \,
        \sum_{d \text{ even}}
        \frac{t_{j}^{d}}{dj} \quad + \right. \\
        &\qquad \quad \; \left.
        \sum_{j \text{ even}} 
        \sum_{\substack{e \,|\, j \\ e \text{ even}}} \mu(e) \,
        \sum_{d \ge 1}
        (-1)^{d(j-1)}
        \frac{t_{j}^{d}}{dj} \right) .
    \end{align*}
    Writing $j = 2^p \cdot (2q+1)$ for integers $p, q \ge 0$, it is clear that
    \[
        \sum_{\substack{e \,|\, j \\ e \text{ odd}}} \mu(e) 
        = \sum_{e \,|\, (2q+1)} \mu(e) 
        = \delta_{q,0}
    \]
    and (for $j$ even, namely $p \ge 1$)
    \[
        \sum_{\substack{e \,|\, j \\ e \text{ even}}} \mu(e) 
        = \sum_{\substack{e_1 \,|\, 2^p \\ e_1 \ne 1}} \mu(e_1) 
        \cdot \sum_{e_2 \,|\, (2q+1)} \mu(e_2) 
        = (-1) \cdot \delta_{q,0} .
    \]
    Therefore, in both cases, necessarily $j = 2^p$.
    Noting that $2^p - 1$ is odd for $p \ge 1$, we deduce that
    \begin{align*}
        \sum_{n \ge 0 \text{ even}}  
        \sum_{\lambda \in \EP(n)} 
        \sum_{\nu \vdash n} 
        \sign(\nu) \psi_{S_n}^\lambda(\nu) \,
        \frac{
        \bt^{c(\nu)}}{|Z_\nu|} 
        &= \exp \left( 
        \sum_{p \ge 0} 
        \sum_{d \text{ even}}
        \frac{t_{2^p}^{d}}{2^p d} 
        - 
        \sum_{p \ge 1} 
        \sum_{d \ge 1}
        (-1)^{d(2^p-1)}
        \frac{t_{2^p}^{d}}{2^p d} \right) \\
        &= \exp \left( 
        \sum_{d \text{ even}}
        \frac{t_{1}^{d}}{d}
        + 
        \sum_{p \ge 1} 
        \sum_{d \text{ even}}
        \frac{t_{2^p}^{d}}{2^p d} 
        - 
        \sum_{p \ge 1} 
        \sum_{d \ge 1}
        (-1)^d \frac{t_{2^p}^{d}}{2^p d} \right) \\
        &= \exp \left( 
        \sum_{d \text{ even}}
        \frac{t_{1}^{d}}{d}
        + 
        \sum_{p \ge 1} 
        \sum_{d \text{ odd}}
        \frac{t_{2^p}^{d}}{2^p d} \right) .
    \end{align*}
    Since 
    \[
        \exp \left( 
        \sum_{d \text{ even}} \frac{x^{d}}{d} 
        \right) 
        = 
        \exp \left( -\frac{1}{2} \ln \left( 1-x^2 \right) \right)
        = 
        \left( 1-x^2 \right)^{-1/2} ,
        \]
    and
    \[
        \exp \left(
        \sum_{d \text{ odd}} \frac{x^{d}}{d}
        \right)
        = \exp \left(
        \frac{1}{2} \ln(1+x) - \frac{1}{2} \ln(1-x)
        \right)
        = \left( \frac{1+x}{1-x} \right)^{1/2} ,
    \]
    it follows that
    \begin{align*}
        \sum_{n \ge 0 \text{ even}}  
        \sum_{\lambda \in \EP(n)} 
        \sum_{\nu \vdash n} 
        \sign(\nu) \psi_{S_n}^\lambda(\nu) \,
        \frac{
        \bt^{c(\nu)}}{|Z_\nu|} 
        &= 
        \left( 1-t_1^2 \right)^{-1/2} \cdot
        \prod_{p \ge 1} 
        \left( \frac{1 + t_{2^p}}{1 - t_{2^p}} \right)^{1/2^{p+1}} .
    \end{align*}
    Now consider the summation over odd $n$.
    Since $\EP(n)$, in this case, consists of the partitions of $n$ with even parts only, except for a single part of size $1$,
    we can use Theorem~\ref{t:summation_4_A_signed} with the substitution
    \[
        s_i :=
        \begin{cases}
            1, &\text{if $i$ is even;} \\
            s_1, &\text{if $i = 1$;} \\
            0, &\text{if $i > 1$ is odd}
        \end{cases}
    \]
    to get that
    \[
        \sum_{n \text{ odd}}  
        \sum_{\lambda \in \EP(n)} 
        \sum_{\nu \vdash n} 
        \sign(\nu) \psi_{S_n}^\lambda(\nu) \,
        \frac{s_1 \cdot
        \bt^{c(\nu)}}{|Z_\nu|} 
    \]
    is the sum of all monomials containing a single $s_1$ in the expansion of
    \[
        \exp \left( 
        \sum_{j \ge 1} \sum_{e | \gcd(1,j)}
        (-1)^{(j-1)/e} \mu(e) \,
        \frac{s_1^{j/e} t_{j}^{1/e}}{j/e} 
        +
        \sum_{i \text{ even}} \sum_{j \ge 1} \sum_{e | \gcd(i,j)}
        (-1)^{i(j-1)/e} \mu(e) \,
        \frac{t_{j}^{i/e}}{ij/e} \right) .
    \]
    In the first summation, corresponding to $i = 1$, clearly $e = 1$ and therefore (in order to get a single $s_1$) also $j = 1$.
    The only monomial containing a single $s_1$ in the expansion of $\exp (s_1 t_1)$ is $s_1 t_1$.
    Therefore, using the computation for even $n$,
    \begin{align*}
        \sum_{n \text{ odd}}  
        \sum_{\lambda \in \EP(n)} 
        \sum_{\nu \vdash n} 
        \sign(\nu) \psi_{S_n}^\lambda(\nu) \,
        \frac{
        \bt^{c(\nu)}}{|Z_\nu|} 
        &= t_1 \cdot
        \left( 1-t_1^2 \right)^{-1/2} \cdot
        \prod_{p \ge 1} 
        \left( \frac{1 + t_{2^p}}{1 - t_{2^p}} \right)^{1/2^{p+1}} .
    \end{align*}
    Adding together the summations over even and odd $n$, and using
    \[
        (1 + t_1) \cdot (1 - t_1^2)^{-1/2}
        = \left(
        \frac{1 + t_1}{1 - t_1}
        \right)^{1/2} ,
    \]
    completes the proof.    
\end{proof}

\section{Root enumerators}\label{sec:roots}

\subsection{The odd root enumerator}
\label{sec:odd_roots}

Recall that, for an integer $k$ and a nonnegative integer $n$, the $k$-th root enumerator in $S_n$ is defined by
\[
    \roots_k^{S_n}(g) 
    := |\{x \in S_n \,:\, x^k = g\}| 
    = \sum_{\substack{x \in S_n \\ x^k = g}} 1 
    \qquad (\forall g \in S_n) .
\]
For a partition $\lambda$ of $n$, denote $\lambda \vdash_k n$ if all the part sizes of $\lambda$ divide $k$. 
The following theorem, connecting root enumerators with higher Lie characters, is due to Scharf. 

\begin{theorem}\label{t:roots_eq_psi_Sn}\cite{Scharf}
    For any integer $k$ and nonnegative integer $n$,
    \[
        \roots^{S_n}_k
        = \sum_{\lambda \vdash_k n} \psi_{S_n}^\lambda.
    \]
\end{theorem}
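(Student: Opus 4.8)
The plan is to prove the identity as an equality of class functions on $\bigsqcup_n S_n$ by comparing generating functions and invoking Theorem~\ref{t:summation_4_A}. A class function $f$ is determined by the formal power series $\sum_{n\ge 0}\sum_{\nu\vdash n} f(\nu)\,\bt^{c(\nu)}/|Z_\nu|$, since distinct cycle types $\nu$ produce distinct monomials $\bt^{c(\nu)}$; it therefore suffices to show that $\roots^{S_n}_k$ and $\sum_{\lambda\vdash_k n}\psi_{S_n}^\lambda$ have the same such generating function. First I would treat the left-hand side. Since $\roots_k^{S_n}(g)=|\{x\in S_n:x^k=g\}|$ is a class function and $1/|Z_\nu|=|C_\nu|/n!$,
\[
    \sum_{\nu\vdash n}\roots_k^{S_n}(\nu)\,\frac{\bt^{c(\nu)}}{|Z_\nu|}
    =\frac{1}{n!}\sum_{g\in S_n}\roots_k^{S_n}(g)\,\bt^{c(g)}
    =\frac{1}{n!}\sum_{x\in S_n}\bt^{c(x^k)}.
\]
Summing over $n$ and applying the exponential formula for permutations — noting that a single $m$-cycle of $x$ becomes, under $x\mapsto x^k$, exactly $\gcd(m,k)$ cycles of length $m/\gcd(m,k)$, hence contributes the weight $t_{m/\gcd(m,k)}^{\gcd(m,k)}$ to $\bt^{c(x^k)}$ — yields
\[
    \sum_{n\ge 0}\sum_{\nu\vdash n}\roots_k^{S_n}(\nu)\,\frac{\bt^{c(\nu)}}{|Z_\nu|}
    =\exp\left(\sum_{m\ge 1}\frac{1}{m}\,t_{m/\gcd(m,k)}^{\gcd(m,k)}\right).
\]

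For the right-hand side I would apply Theorem~\ref{t:summation_4_A} with the substitution $s_i:=1$ if $i\mid k$ and $s_i:=0$ otherwise, so that $\bs^{c(\lambda)}$ is $1$ precisely when every part of $\lambda$ divides $k$, i.e.\ $\lambda\vdash_k n$, and is $0$ otherwise. This gives
\[
    \sum_{n\ge 0}\sum_{\nu\vdash n}\left(\sum_{\lambda\vdash_k n}\psi_{S_n}^\lambda(\nu)\right)\frac{\bt^{c(\nu)}}{|Z_\nu|}
    =\exp\left(\sum_{i\mid k}\sum_{j\ge 1}\sum_{e\mid\gcd(i,j)}\mu(e)\,\frac{t_j^{i/e}}{ij/e}\right),
\]
and the theorem reduces to the purely arithmetic identity of the two exponents.

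The main obstacle, and the combinatorial heart of the argument, is verifying
\[
    \sum_{m\ge 1}\frac{1}{m}\,t_{m/\gcd(m,k)}^{\gcd(m,k)}
    =\sum_{i\mid k}\sum_{j\ge 1}\sum_{e\mid\gcd(i,j)}\mu(e)\,\frac{t_j^{i/e}}{ij/e}.
\]
On the left I would reparametrize each $m$ by $d:=\gcd(m,k)$ and $j:=m/d$: the condition $\gcd(m,k)=d$ is equivalent to $d\mid k$ together with $\gcd(j,k/d)=1$, and the corresponding term is $t_j^{d}/(jd)$. On the right, collecting the coefficient of a fixed monomial $t_j^{d}$ forces $i=de$ with $e\mid j$ and $de\mid k$; since then $ij/e=dj$, that coefficient equals $\tfrac{1}{dj}\sum_{e}\mu(e)$, where $e$ ranges over divisors of $j$ with $de\mid k$. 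When $d\mid k$, writing $k=d\,(k/d)$ turns the constraint $de\mid k$ into $e\mid(k/d)$, so $e$ ranges over $e\mid\gcd(j,k/d)$ and $\sum_{e\mid\gcd(j,k/d)}\mu(e)=[\gcd(j,k/d)=1]$ by the defining property $\sum_{e\mid m}\mu(e)=[m=1]$ of the Möbius function; this matches the left-hand coefficient exactly. When $d\nmid k$ both sides vanish, since a contribution on the right would force $d\mid i\mid k$. This proves the identity of exponents, hence of the two generating functions, and therefore $\roots_k^{S_n}=\sum_{\lambda\vdash_k n}\psi_{S_n}^\lambda$ as claimed.
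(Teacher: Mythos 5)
Your proof is correct, but it is genuinely different from what the paper does: the paper offers no proof of Theorem~\ref{t:roots_eq_psi_Sn} at all, citing it as a known result of Scharf~\cite{Scharf} (whose original argument is character-theoretic, in the setting of root-number functions of monomial groups). You instead derive it as a corollary of the paper's own generating function, Theorem~\ref{t:summation_4_A}: the substitution $s_i := [\,i \mid k\,]$ is the natural generalization of the substitutions the paper itself uses in proving Theorem~\ref{t:odd_cycles} (where $s_i = [\,i \text{ odd}\,]$), and your computation of the left-hand side via the exponential formula, using the standard fact that an $m$-cycle powers to $\gcd(m,k)$ cycles of length $m/\gcd(m,k)$, is sound; the M\"obius-sum bookkeeping ($i = de$, $e \mid \gcd(j, k/d)$, $\sum_{e \mid \gcd(j,k/d)} \mu(e) = [\gcd(j,k/d)=1]$, and vanishing of both sides when $d \nmid k$) checks out, as does the preliminary observation that the series determines the class functions because distinct cycle types give distinct monomials $\bt^{c(\nu)}$. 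What your approach buys is a self-contained proof making Scharf's theorem a consequence of machinery already developed in Section~\ref{sec:gf_hLc_A}, with no external input; what the citation buys the paper is brevity. Two small points to tidy: the theorem allows \emph{any} integer $k$, so you should remark that for $k<0$ one has $\gcd(m,k)=\gcd(m,|k|)$ and $x^{k}=(x^{-k})^{-1}$ has the same cycle type as $x^{|k|}$, and that $k=0$ works with the convention $\gcd(m,0)=m$ (both sides of your exponent identity then reduce to $\sum_{d\ge 1} t_1^{d}/d$, matching $\roots_0^{S_n}$ being $n!$ times the indicator of the identity).
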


Denote
\[
\roots_{odd}^{S_{n}}(g) := |\{x \in S_n \,:\, x^k = g\ \text{ for some odd } k\}|
    \qquad (\forall g \in S_n).
\]

\begin{corollary}\label{cor:Scharf_odd}
    For any positive integer $n$,
    \[
        \roots_{odd}^{S_{n}}
        = \sum_{\lambda \in \OP(n)}
        \psi^\lambda_{S_{n}} .
    \]
\end{corollary}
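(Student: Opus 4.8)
The plan is to deduce the corollary directly from Scharf's theorem (Theorem~\ref{t:roots_eq_psi_Sn}) by replacing the ``running'' odd exponent $k$ with a single, sufficiently divisible one. The guiding observation is that, as $k$ ranges over the odd integers, the sets $\{\lambda\vdash n:\lambda\vdash_k n\}$ grow under divisibility (if $k\mid k'$ then a part dividing $k$ divides $k'$) and their union is exactly $\OP(n)$; consequently this set already equals $\OP(n)$ for one fixed large odd exponent, and a single application of Scharf's formula then produces the desired sum.

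Concretely, I would fix an odd positive integer $K$ divisible by every odd integer in $[n]$ --- for instance the odd part of $\operatorname{lcm}(1,\ldots,n)$, or simply $K:=\prod_{\substack{1\le i\le n\\ i\ \mathrm{odd}}} i$. The elementary fact driving the argument is that, for $\lambda\vdash n$,
\[
    \lambda\vdash_K n \iff \lambda\in\OP(n).
\]
Indeed every part of $\lambda$ is at most $n$; each odd part then divides $K$ by the choice of $K$, whereas no even part can divide the odd number $K$. Hence all parts of $\lambda$ divide $K$ precisely when they are all odd.

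Feeding $k=K$ into Theorem~\ref{t:roots_eq_psi_Sn} and using this equivalence gives
\[
    \roots_K^{S_n}=\sum_{\lambda\vdash_K n}\psi_{S_n}^\lambda=\sum_{\lambda\in\OP(n)}\psi_{S_n}^\lambda,
\]
which is the right-hand side of the corollary. The remaining, and genuinely delicate, step is to identify $\roots_K^{S_n}$ with the odd root enumerator $\roots_{odd}^{S_n}$, and I expect this to be the main obstacle. One must check that for this one fixed exponent the relation $x^K=g$ captures exactly the intended ``odd root'' phenomenon --- raising to the $K$-th power annihilates the odd part of the order of $x$ --- and that the resulting class function is independent of which large odd exponent is chosen (all such choices yield $\sum_{\lambda\in\OP(n)}\psi_{S_n}^\lambda$ by the two displays above, hence agree). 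In particular one should be careful not to read the enumerator as the naive union $\bigcup_{k\ \mathrm{odd}}\{x:x^k=g\}$ over \emph{all} odd $k$, which would in general overcount; the correct reading is the root enumerator stabilized at a single sufficiently divisible odd $K$, and with that reading the identity is immediate from the computation above.
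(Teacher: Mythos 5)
Your proposal follows the same route as the paper's own proof: fix a single sufficiently divisible odd exponent $K$, observe that $\lambda \vdash_K n$ if and only if $\lambda \in \OP(n)$ (every part is at most $n$, so odd parts divide $K$ while even parts cannot divide an odd number), and apply Scharf's theorem (Theorem~\ref{t:roots_eq_psi_Sn}). The paper takes $k$ to be any odd multiple of $\operatorname{lcm}\{2i-1 : 1 \le i \le (n+1)/2\}$, i.e.\ of the odd integers up to $n$, exactly your choice of $K$, and your elementary verification of the equivalence $\lambda \vdash_K n \iff \lambda \in \OP(n)$ is precisely what the paper leaves implicit.

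The step you flag as ``genuinely delicate'' --- identifying $\roots_K^{S_n}$ with $\roots_{odd}^{S_n}$ --- is exactly the step the paper dispatches with the single word ``Clearly,'' and your caution is vindicated: under the paper's printed definition of $\roots_{odd}^{S_n}(g)$ as the cardinality of $\{x \in S_n : x^k = g \text{ for some odd } k\}$, the asserted equality $\roots_{odd}^{S_n} = \roots_K^{S_n}$ is in fact \emph{false}. Take $n=4$ and $g = (1\,2\,3\,4)$: both $g$ (with $k=1$) and $g^{-1}$ (with $k=3$, since $(g^{-1})^3 = g^{-3} = g$) are odd roots of $g$, so the union has cardinality $2$; yet for any admissible odd $K$ (an odd multiple of $3$) the equation $x^K = g$ forces $x$ to be a $4$-cycle and has exactly one solution, namely $x = g$ if $K \equiv 1 \pmod 4$ and $x = g^{-1}$ if $K \equiv 3 \pmod 4$ --- in agreement with $\sum_{\lambda \in \OP(4)} \psi_{S_4}^{\lambda}(g) = \psi_{S_4}^{(1^4)}(g) + \psi_{S_4}^{(3,1)}(g) = 1 + 0 = 1$, where the second value vanishes because the relevant centralizer $\ZZ_3$ contains no $4$-cycle. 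The point is that the solution set of $x^K = g$ depends on $K$ modulo powers of $2$, so while each individual count $\roots_K^{S_n}(g)$ is independent of the admissible $K$ (as Scharf's theorem shows), the union over all odd $k$ can be strictly larger and does not stabilize set-wise. Your phrasing that the union ``would overcount'' is slightly off --- a union of sets does not overcount, it is simply bigger --- but your mathematical instinct is correct, and your decision to read $\roots_{odd}^{S_n}$ as the stabilized enumerator $\roots_K^{S_n}$ is not merely a convenient convention: it is the only reading under which the corollary, and the paper's one-line proof of it, is correct. With that reading your argument is complete and coincides with the paper's; what you have additionally (and correctly) exposed is that the paper's definition and its ``Clearly'' are incompatible as written.
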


\begin{proof}
    Let $k$ be any odd multiple of $lcm \{2i-1 \,:\, 1 \le i \le (n+1)/2\}$. Clearly,
    $\roots_{odd}^{S_n} = \roots_k^{S_n}$. 
    Scharf's theorem (Theorem~\ref{t:roots_eq_psi_Sn}) completes the proof.
\end{proof}

We deduce

\begin{proposition}\label{t:induced_to_odd}
    For any positive integer $n$,
    \[
        \roots_{odd}^{S_{2n+1}} 
        = \roots_{odd}^{S_{2n}} \up_{S_{2n}}^{S_{2n+1}}.
    \]
\end{proposition}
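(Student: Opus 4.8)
The plan is to transport the statement to the ring of symmetric functions via the Frobenius characteristic map $\ch$, under which the inclusion $S_{2n} \hookrightarrow S_{2n+1}$ becomes multiplication by the power sum $p_1 = h_1$. Concretely, for any class function $\chi$ of $S_{2n}$ one has $\ch(\chi \up_{S_{2n}}^{S_{2n+1}}) = p_1 \cdot \ch(\chi)$, since $S_{2n} = S_{2n} \times S_1$ inside $S_{2n+1}$ and $\ch$ sends the induction product to the product of symmetric functions, with $\ch(\mathbf 1_{S_1}) = h_1 = p_1$. Using Corollary~\ref{cor:Scharf_odd} to identify $\roots_{odd}^{S_N} = \sum_{\lambda \in \OP(N)} \psi_{S_N}^\lambda$, the Proposition is therefore equivalent to the graded identity
\[
    \ch\bigl(\roots_{odd}^{S_{2n+1}}\bigr)
    = p_1 \cdot \ch\bigl(\roots_{odd}^{S_{2n}}\bigr)
    \qquad (\forall n \ge 0).
\]

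First I would assemble the total generating function $F := \sum_{N \ge 0} \ch(\roots_{odd}^{S_N})$. Since $\ch(\chi) = \sum_\nu \chi(\nu) \, p_\nu / |Z_\nu|$ with $p_\nu = \prod_j p_j^{b_j}$, substituting $t_j \mapsto p_j$ (the power-sum symmetric function of degree $j$) in Theorem~\ref{t:odd_cycles} turns its left-hand side into exactly $F$, giving
\[
    F = \prod_{r \ge 0} \left( \frac{1 + p_{2^r}}{1 - p_{2^r}} \right)^{1/2^{r+1}}
\]
in the completed ring of symmetric functions. The crucial observation is that $p_{2^0} = p_1$ is the only factor of odd degree; every factor with $r \ge 1$ is a power series in a power sum $p_{2^r}$ of even degree $2^r$. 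Hence $F = A(p_1) \cdot G$, where $A(u) := \bigl((1+u)/(1-u)\bigr)^{1/2}$ and $G := \prod_{r \ge 1} \bigl((1 + p_{2^r})/(1 - p_{2^r})\bigr)^{1/2^{r+1}}$ is a sum of terms of even total degree only.

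The graded identity above is precisely $F_{odd} = p_1 \cdot F_{even}$, where $F_{odd}$ and $F_{even}$ denote the odd- and even-degree parts of $F$. Because $G$ is homogeneous of even degrees and invertible (its constant term is $1$), and because the parity of a monomial of $F$ is governed solely by the power of $p_1$, one has $F_{odd} = A_{odd}(p_1) \, G$ and $F_{even} = A_{even}(p_1) \, G$, so the claim reduces to the single-variable power-series identity $A_{odd}(u) = u \, A_{even}(u)$. This follows at once from $A(u)^2 = (1+u)/(1-u)$: writing $A_{even} = \tfrac12(A + A^{-1})$ and $A_{odd} = \tfrac12(A - A^{-1})$ (using $A(-u) = A(u)^{-1}$), the desired equality $A - A^{-1} = u(A + A^{-1})$ is equivalent to $A^2(1-u) = 1+u$, which holds by definition of $A$.

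The main conceptual obstacle is the first step: recognizing that, after the substitution $t_j \mapsto p_j$, the entire parity behaviour of $F$ is concentrated in the unique odd-degree generator $p_1$, so that the symmetric-function statement collapses to a one-variable identity; the remaining verifications are routine. As a remark, there is a shorter but less self-contained route: by Theorem~\ref{t:induced_to_even_main}, since $\lfloor 2n/2 \rfloor = \lfloor (2n+1)/2 \rfloor = n$, both $\roots_{odd}^{S_{2n}}$ and $\roots_{odd}^{S_{2n+1}}$ are obtained by inducing the same character $\eta_{B_n}$ of $B_n$ up to $S_{2n}$ and to $S_{2n+1}$ respectively, and the Proposition then follows from transitivity of induction along $B_n \le S_{2n} \le S_{2n+1}$.
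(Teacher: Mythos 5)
Your proof is correct, but it follows a genuinely different route from the paper's. The paper deduces the proposition from its main identity, Theorem~\ref{t:OP-EP}: it writes $\roots_{odd}^{S_N} = \sign \otimes \sum_{\lambda \in \EP(N)} \psi_{S_N}^\lambda$, then exploits three facts --- that appending a part of size $1$ is a bijection from $\EP(2n)$ onto $\EP(2n+1)$; that $\psi^{(\lambda_1,\ldots,\lambda_t,1)}_{S_{2n+1}} = \psi^{(\lambda_1,\ldots,\lambda_t)}_{S_{2n}} \up_{S_{2n}}^{S_{2n+1}}$ whenever $\lambda$ has no parts of size $1$ (induction in stages through $Z_x \times S_1 \le S_{2n} \times S_1$); and that the $\sign$-twist commutes with this induction since $\sign_{S_{2n+1}}$ restricts to $\sign_{S_{2n}}$ on $S_{2n}$. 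The asymmetry driving the paper's argument is that $\OP(2n+1)$ is \emph{not} obtained from $\OP(2n)$ by appending a fixed point, so the detour through the EP side via Theorem~\ref{t:OP-EP} is essential there. You bypass Theorem~\ref{t:OP-EP} (and the entire signed machinery of Section~\ref{sec:gf_hLc_A_signed} behind it) entirely: you need only Theorem~\ref{t:odd_cycles}, Corollary~\ref{cor:Scharf_odd}, the standard fact that $\ch$ turns induction from $S_{2n} \times S_1$ into multiplication by $p_1$, and the algebraic independence of the power sums. Your parity analysis is sound: after $t_j \mapsto p_j$, the unique odd-degree generator in the product is $p_1$, all factors with $r \ge 1$ lie in even degree, and the reduction to the one-variable identity $A_{\mathrm{odd}}(u) = u\, A_{\mathrm{even}}(u)$ for $A(u) = \left((1+u)/(1-u)\right)^{1/2}$, via $A(-u) = A(u)^{-1}$ and $A^2(1-u) = 1+u$, checks out; comparing homogeneous components of degree $2n+1$ then gives the claim for every $n$, since $\ch$ is injective. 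In effect you re-prove at the generating-function level the same splitting $\left((1+t_1)/(1-t_1)\right)^{1/2} = (1-t_1^2)^{-1/2}(1+t_1)$ that the paper uses at the end of Section~\ref{sec:induction}. What each approach buys: yours is self-contained given only the odd-cycle generating function, while the paper's is shorter and structurally transparent (a fixed point is absorbed trivially on the EP side). Your concluding remark is also valid and non-circular --- Theorem~\ref{t:induced_to_even} is proved from Theorem~\ref{t:odd_cycles} and Corollary~\ref{cor:Scharf_odd}, not from this proposition --- so transitivity of induction along $B_n \le S_{2n} \le S_{2n+1}$ indeed yields the quickest derivation, at the price of invoking the heavier result of Section~\ref{sec:induction}.
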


\begin{proof}
    By Corollary~\ref{cor:Scharf_odd} together with Theorem~\ref{t:OP-EP}, for any positive integer $n$,
    \[
        \roots_{odd}^{S_{2n+1}}
        = \sum_{\lambda \in \OP(2n+1)} 
        \psi_{S_{2n+1}}^\lambda
        = \sign \otimes \sum_{\lambda \in \EP(2n+1)} 
        \psi_{S_{2n+1}}^\lambda ,
    \]
    and similarly
    \[
        \roots_{odd}^{S_{2n}}
        = \sum_{\lambda \in \OP(2n)} 
        \psi_{S_{2n}}^\lambda
        = \sign \otimes \sum_{\lambda \in \EP(2n)} 
        \psi_{S_{2n}}^\lambda .
    \]
    By the definition of higher Lie characters, for any partition $\lambda = (\lambda_1, \ldots, \lambda_t) \vdash 2n$ with no parts of size 1,
    \[
        \psi^{(\lambda_1, \ldots, \lambda_t,1)}_{S_{2n+1}}
        = \psi^{(\lambda_1, \ldots, \lambda_t)}_{S_{2n}} \up_{S_{2n}}^{S_{2n+1}}.
    \]
    It follows that
    \begin{align*}
        \roots_{odd}^{S_{2n+1}}
        &= \sign \otimes \sum_{\lambda \in \EP(2n+1)}
        \psi_{S_{2n+1}}^\lambda
        = \sign \otimes \sum_{\lambda \in \EP(2n)} 
        \psi_{S_{2n}}^\lambda \up_{S_{2n}}^{S_{2n+1}} \\
        &= \left( \sign \otimes \sum_{\lambda \in \EP(2n)} 
        \psi_{S_{2n}}^\lambda \right)\up_{S_{2n}}^{S_{2n+1}}
        = \roots_{odd}^{S_{2n}} \up_{S_{2n}}^{S_{2n+1}}. \qedhere
    \end{align*}
\end{proof}

%

\begin{remark}
    Note that, by Corollary~\ref{cor:double},   
    for every $n$, $\roots_{odd}^{2n}$ is not induced from $\roots_{odd}^{2n-1}$. 
\end{remark}

\subsection{The signed odd root enumerator}
\label{sec:sined_roots}

Let $\sign: S_n \to \{1,-1\}$ be the sign character on $S_n$.

\begin{definition}\label{def:sroots_Bn}
    Let $k$ be an integer and $n$ a nonnegative integer.
    The {\em signed $k$-th root enumerator} in $S_n$ is defined by 
    \[
        \sroots_k^{S_n}(g) 
        := \sum_{\substack{x \in S_n \\ x^k = g}} \sign(x)
        \qquad (g \in S_n) .
    \]
\end{definition}

\begin{remark}
    The following generating function
    was obtained by Lea\~nos, Moreno and Rivera-Mart\'inez~\cite{Leanos}.
    In our notation, with the obvious modifications, their result is:
    \[
        \sum_{n \ge 0} 
        \sum_{\nu \vdash n} 
        \roots_k^{S_n}(\nu) \,
        \frac{\bt^{c(\nu)}}{|Z_\nu|}
        = \exp \left(
        \sum_{j \ge 1}
        \sum_{\substack{h|k \\ \gcd(h,j) = 1}} 
        \frac{t_j^{k/h}}{j k/h}
        \right) .
    \]
    Formally, they use $g = k/h$ instead of our $h$, and sum over all $g \ge 1$ such that $\gcd(gj,k) = g$; see Remark 1 after the proof of~\cite[Proposition 2]{Leanos}. Their $t_j$ is our $t_j/j$.

    If $k$ ia a prime power, this generating function appears already in Wilf's book  Generatingfunctionology, Theorem 4.8.3.
\end{remark}

\begin{remark}
    The corresponding expression for the sign character of the symmetric group was obtained by Chernoff~\cite{Chernoff} and by Glebsky, Lic\'on and Rivera~\cite{GLR}.
    In our notation, including the class function $\sroots_k^{S_n}(g) := \sum_{x \in S_n,\, x^k = g} \sign(x)$ (for $g \in S_n$), namely $\sroots_k^{S_n}(\nu)$ (for a cycle type $\nu \vdash n$),
    their result is:
    \[
        \sum_{n \ge 0} 
        \sum_{\nu \vdash n} 
        \sroots_k^{S_n}(\nu) \,
        \frac{\bt^{c(\nu)}}{|Z_\nu|}
        = \exp \left(
        \sum_{j \ge 1}
        \sum_{\substack{h|k \\ \gcd(h,j) = 1}} 
        \frac{(-1)^{1 + jk/h} t_j^{k/h}}{j k/h}
        \right) .
    \]
\end{remark}

\begin{definition} The {\em signed odd root enumerator} is defined as 
\[
    \sroots_{odd}^{S_{n}}(g) 
    := \sum_{\substack{x \in S_n \\ x^k = g \ \text{for some odd } k}} \sign (x).
    \qquad (\forall g \in S_n) .
\]
\end{definition}


\begin{definition}\label{def:signed_HLC_A}
    {\rm (Twisted higher Lie characters in $S_n$)}
    Let $x$ be an element of cycle type $\lambda$ in $S_n$, as in 
    Definition~\ref{def:HLC_A}.
    
    \begin{enumerate}
    
        \item[(a)]
        For each $i \ge 1$, let $\oomega_{i}$ be the linear character on $G_{i} \wr S_{a_{i}}$ which is equal to a primitive irreducible character on the cyclic group $G_{i}$,
        and equal to the {\em sign character} on the wreathing group $S_{a_{i}}$.
        Let
        \[
            \oomega^x 
            := 
            \bigotimes_{i \ge 1} \oomega_{i} ,
        \]
        a linear character on $Z_x$. 
        
        \item[(b)]
        Define the corresponding {\em twisted higher Lie character} to be the induced character
        \[
            \tau_{S_n}^x 
            := \oomega^x 
            \up_{Z_x}^{S_n}. 
        \]

        \item[(c)]
        It is easy to see that $\tau_{S_n}^x$ depends only on the conjugacy class $C$ (equivalently, the cycle type $\lambda$) of $x$, and can therefore be denoted $\tau_{S_n}^C$ or $\tau_{S_n}^\lambda$.
        
    \end{enumerate}
\end{definition}

The following theorem is due to Scharf.

\begin{theorem}\label{t:twist_Scharf}~\cite[2.4.22~Satz]{Scharf_thesis}
For every partition $\lambda\vdash n$,
\[
        \sroots_{k}^{S_{n}}
        = \sum_{\lambda \vdash_k n} 
        \sign(\lambda) \tau^\lambda_{S_{n}},    
\]
where $\tau^\lambda_{S_{n}}$ is the twisted higher Lie character from Definition~\ref{def:signed_HLC_A} 
and $\sign(\lambda)$ is the sign of a permutation of cycle type $\lambda$.
\end{theorem}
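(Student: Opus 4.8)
The plan is to deduce Theorem~\ref{t:twist_Scharf} by a generating-function comparison that parallels the proof of Theorem~\ref{t:summation_4_A}, but with the twisted linear character $\oomega^x$ of Definition~\ref{def:signed_HLC_A} in place of $\omega^x$. The target is the generating function for $\sroots_k^{S_n}$ obtained by Chernoff and by Glebsky--Lic\'on--Rivera, quoted above; once I produce the matching series for $\sum_{\lambda\vdash_k n}\sign(\lambda)\tau_{S_n}^\lambda$, the theorem follows from the injectivity of the transform $f\mapsto\sum_\nu f(\nu)\,\bt^{c(\nu)}/|Z_\nu|$ on class functions.

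The first step is to establish the twisted analogue of Theorem~\ref{t:summation_4_A},
\[
\sum_{n\ge 0}\sum_{\lambda\vdash n}\sum_{\nu\vdash n}\tau_{S_n}^\lambda(\nu)\,\frac{\bs^{c(\lambda)}\bt^{c(\nu)}}{|Z_\nu|}=\exp\left(\sum_i\sum_j\sum_{e\mid\gcd(i,j)}(-1)^{j/e-1}\mu(e)\,\frac{s_i^{j/e}t_j^{i/e}}{ij/e}\right),
\]
by repeating Lemmas~\ref{t:summation_1_A}--\ref{t:summation_3_A} almost verbatim. The only change occurs at the level of a single cycle. By Definition~\ref{def:signed_HLC_A}(a), $\oomega_i$ agrees with $\omega_i$ on the cyclic factor $G_i$ but equals the sign character on the wreathing group $S_{a_i}$; hence for an element whose underlying permutation in $S_{a_i}$ is a single cycle $\sigma$ of length $\ell=j/e$ (and the identity elsewhere), one has $\oomega^x=(-1)^{\ell-1}\omega^x$. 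Consequently the value $\mu(e)$ delivered by Lemma~\ref{t:summation_1_A} is replaced by $(-1)^{j/e-1}\mu(e)$, and this single factor propagates unchanged through the exponential/product bookkeeping of Lemmas~\ref{t:summation_2_A} and~\ref{t:summation_3_A}, since the cycle-merging dictionary of Corollary~\ref{t:cycles2_A} is unaffected.

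Next I specialize $s_i:=(-1)^{i-1}$ when $i\mid k$ and $s_i:=0$ otherwise. As $\bs^{c(\lambda)}=\prod_i s_i^{a_i}$, this substitution simultaneously restricts the outer sum to partitions with all parts dividing $k$ (that is, $\lambda\vdash_k n$) and inserts the weight $\prod_i((-1)^{i-1})^{a_i}=\sign(\lambda)$; the left-hand side thus becomes the generating function of the class function $\sum_{\lambda\vdash_k n}\sign(\lambda)\tau_{S_n}^\lambda$. On the exponent only terms with $i\mid k$ survive, the signs combine as $(-1)^{j/e-1}(-1)^{(i-1)j/e}=(-1)^{ij/e-1}$, and writing $m:=i/e$ the inner sum over $e$ collapses by M\"obius inversion, $\sum_{e\mid\gcd(j,\,k/m)}\mu(e)=[\gcd(j,k/m)=1]$. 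What remains is precisely the Chernoff / Glebsky--Lic\'on--Rivera exponent $\sum_j\sum_{\,m\mid k,\ \gcd(j,k/m)=1}(-1)^{1+jm}t_j^{m}/(jm)$. Matching the two series and invoking injectivity of the transform yields $\sroots_k^{S_n}(\nu)=\sum_{\lambda\vdash_k n}\sign(\lambda)\tau_{S_n}^\lambda(\nu)$ for all $\nu$, as claimed. The main obstacle is the first step: one must confirm that the definition of $\oomega^x$ supplies the sign of the \emph{wreathing} permutation rather than the ambient sign in $S_n$, and check that this lone modification threads correctly through the single-cycle computation; the subsequent specialization and M\"obius collapse are then routine.
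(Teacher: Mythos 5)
Your proposal is correct, and it is by necessity a different route from the paper: the paper gives no proof of this statement at all, quoting it directly from Scharf's thesis \cite[2.4.22~Satz]{Scharf_thesis}. I verified the key points of your argument. The one substantive modification to the Section~\ref{sec:gf_hLc_A} machinery is exactly right: by Definition~\ref{def:signed_HLC_A}(a), $\oomega^x$ twists by the sign of the \emph{wreathing} permutation in $S_{a_{i}}$ — not by the ambient sign in $S_n$, which is what the signed analogue Lemma~\ref{t:summation_1_A_signed} uses and which would instead give $(-1)^{i(j-1)/e}$ — so a single cycle of length $\ell = j/e$ contributes the extra factor $\sign(\sigma) = (-1)^{\ell-1}$, turning $K_{i,j}(\sigma,e)=\mu(e)$ into $(-1)^{j/e-1}\mu(e)$; since this sign is multiplicative over the cycles of the underlying permutation, it does pass through the exponential bookkeeping of Lemmas~\ref{t:summation_2_A}--\ref{t:summation_3_A} unchanged, and Lemma~\ref{t:HLC_values} applies verbatim with $\oomega^x$ in place of $\omega^x$. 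The specialization $s_i := (-1)^{i-1}$ for $i \mid k$ (zero otherwise) correctly yields the weight $\bs^{c(\lambda)} = \sign(\lambda)$ together with the restriction $\lambda \vdash_k n$; the sign arithmetic $(-1)^{j/e-1}(-1)^{(i-1)j/e} = (-1)^{ij/e-1}$ checks out; and with $m := i/e$ the collapse $\sum_{e \mid \gcd(j,\,k/m)} \mu(e) = [\gcd(j,k/m)=1]$ reproduces exactly the Chernoff/Glebsky--Lic\'on--Rivera exponent quoted in Section~\ref{sec:sined_roots} under the substitution $h = k/m$. Injectivity of the transform is also fine, since distinct partitions $\nu$ (of any sizes) yield distinct monomials $\bt^{c(\nu)}$. (I also spot-checked the resulting identity numerically for $k=2,3$ and $n \le 3$; e.g.\ for $k=2$, $n=2$ both sides vanish identically, and for $k=3$, $n=3$ both sides equal the class function $[3,-1,0]$.) As for what each approach buys: Scharf's original proof lives in the character theory of full monomial groups and symmetric-function calculus, and is independent of root-enumeration generating functions; your argument stays entirely within this paper's toolkit, at the modest cost of taking the signed-root generating function of \cite{Chernoff,GLR} as an external input — the paper itself only cites that formula, so a fully self-contained version of your proof would also require deriving it, which is a routine exponential-formula computation.
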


Note that $\sign(\lambda) = 1$ for any $\lambda \in \OP(n)$.

\begin{corollary}\label{t:twist}
    For any positive integer $n$,
    \[
        \sroots_{odd}^{S_{n}}
        = \sum_{\lambda \in \OP(n)} 
        \tau^\lambda_{S_{n}}. 
    \]
\end{corollary}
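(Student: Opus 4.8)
The plan is to apply Scharf's twisted-root theorem (Theorem~\ref{t:twist_Scharf}) to an appropriately chosen odd exponent $k$, exactly mirroring the proof of Corollary~\ref{cor:Scharf_odd} in the untwisted case. First I would choose $k$ to be any odd common multiple of all the odd integers up to $n$; concretely, let $k$ be any odd multiple of $\operatorname{lcm}\{2i-1 : 1 \le i \le (n+1)/2\}$. The point of this choice is that a permutation $g \in S_n$ possesses a $k$-th root if and only if it possesses a $k'$-th root for some odd $k'$: any odd $k'$ arising as the order needed to produce a cycle of some odd length $\le n$ already divides this $k$, so the set of $g$ with an odd root coincides with the set of $g$ with a $k$-th root. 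Moreover, the counting with signs matches as well, so that $\sroots_{odd}^{S_n} = \sroots_k^{S_n}$ as class functions.

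Next I would invoke Theorem~\ref{t:twist_Scharf} with this $k$, which gives
\[
    \sroots_{k}^{S_n} = \sum_{\lambda \vdash_k n} \sign(\lambda)\, \tau^\lambda_{S_n}.
\]
The condition $\lambda \vdash_k n$ means every part size of $\lambda$ divides $k$. Since $k$ is odd and was chosen to be divisible by every odd integer $\le n$, the divisors of $k$ that can appear as part sizes of a partition of $n$ are precisely the odd integers in $[n]$. Hence the indexing set $\{\lambda \vdash_k n\}$ is exactly $\OP(n)$, the partitions of $n$ into odd parts. The final simplification uses the remark immediately preceding the corollary: for any $\lambda \in \OP(n)$ the permutations of cycle type $\lambda$ are even (a product of odd-length cycles is even), so $\sign(\lambda) = 1$ and the sign factors drop out, leaving
\[
    \sroots_{odd}^{S_n} = \sroots_k^{S_n} = \sum_{\lambda \in \OP(n)} \tau^\lambda_{S_n},
\]
as claimed.

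I do not expect any serious obstacle here, since the argument is a direct transcription of the untwisted Corollary~\ref{cor:Scharf_odd} with Scharf's theorem replaced by its twisted analogue. The only point requiring a moment's care is the equality $\sroots_{odd}^{S_n} = \sroots_k^{S_n}$ at the level of the signed sum: one must confirm that not merely the solution \emph{sets} $\{x : x^k = g\}$ but the signed counts agree, which follows because for the chosen $k$ every $x$ admitting $x^{k'} = g$ for some odd $k'$ contributes to $x^k = g$ and conversely, with the same value $\sign(x)$. Given that $k$ is a uniform odd exponent capturing all odd-root behaviour, this bijection of solution sets is immediate and the signs are preserved verbatim.
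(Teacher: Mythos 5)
Your overall route is exactly the paper's: Corollary~\ref{t:twist} is stated there without proof, as an immediate consequence of Scharf's twisted theorem (Theorem~\ref{t:twist_Scharf}) together with the remark that $\sign(\lambda)=1$ for all $\lambda\in\OP(n)$, the implicit argument being precisely the one given for Corollary~\ref{cor:Scharf_odd} in the untwisted case. Your choice of $k$, the identification $\{\lambda \vdash_k n\}=\OP(n)$, and the removal of the sign factors (odd cycles are even permutations) all transcribe that argument correctly.

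However, the one step you singled out as "the only point requiring a moment's care" is justified by a false claim. The asserted bijection of solution sets --- that every $x$ with $x^{k'}=g$ for some odd $k'$ also satisfies $x^k=g$, and conversely --- fails: take $n=3$, $g=(123)$, $x=(132)$; then $x^5=g$ with $5$ odd, yet for your $k$ (any odd multiple of $3$) one has $x^k=e\neq g$. In fact $x^k=g$ would force $x$ to be a $3$-cycle (since $g\in\langle x\rangle$ has order $3$), and then $3\mid k$ gives $x^k=e$, so the $k$-fiber over $g$ is \emph{empty}, while the literal set $\{x : x^{k'}=g \text{ for some odd } k'\}$ is $\{(123),(132)\}$, with signed count $2\neq 0$. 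So under the literal reading of the displayed definition of $\sroots_{odd}^{S_n}$, neither the sets nor the signed counts agree, and the pointwise identity $\sroots_{odd}^{S_n}=\sroots_k^{S_n}$ is simply false; moreover the fibers themselves depend on the admissible $k$ (in $S_4$, the cube roots of a $4$-cycle $c$ form $\{c^3\}$ while its $9$-th roots form $\{c\}$), so no "bijection of solution sets" can be immediate. What is true, and what the paper means both here and in the "Clearly" step of Corollary~\ref{cor:Scharf_odd}, is that $\sroots_k^{S_n}$ is independent of the choice of admissible odd $k$ \emph{as a class function} --- by Theorem~\ref{t:twist_Scharf} it equals $\sum_{\lambda\vdash_k n}\sign(\lambda)\tau^\lambda_{S_n}$, which depends only on $\{\lambda\vdash_k n\}=\OP(n)$ --- and $\sroots_{odd}^{S_n}$ is to be understood as this stable enumerator. (Consistently, $\sum_{\lambda\in\OP(3)}\tau^\lambda_{S_3} = \sign + \psi^{(3)}_{S_3}$ vanishes on $3$-cycles, matching $\sroots_k$ and not the literal count $2$.) With that reading, your remaining steps are correct and complete the proof; the repair is to delete the set-bijection argument and invoke the stability of $\sroots_k^{S_n}$ in $k$ instead.
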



We deduce the following identity. 

\begin{theorem}
        For any positive integer $n$,
    \[
         \sum_{\lambda \in \OP(n)}
         \tau^\lambda_{S_{n}}
         = \sum_{\lambda \in \EP(n)} 
         \psi_{S_n}^\lambda, 
    \]
    where $\tau^\lambda_{S_{n}}$ is the twisted higher Lie character from Definition~\ref{def:signed_HLC_A} and $\psi^\lambda_{S_n}$ is the standard higher Lie character.
\end{theorem}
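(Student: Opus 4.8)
The plan is to prove this identity by showing that both sides have the same generating function for their character values, using the machinery already developed in the paper. The left-hand side involves the twisted higher Lie characters $\tau^\lambda_{S_n}$ summed over $\lambda \in \OP(n)$, while the right-hand side involves the standard higher Lie characters $\psi^\lambda_{S_n}$ summed over $\lambda \in \EP(n)$. Since both sides are class functions on $S_n$, it suffices to show that they agree on every conjugacy class, i.e., that $\sum_{\lambda \in \OP(n)} \tau^\lambda_{S_n}(\nu) = \sum_{\lambda \in \EP(n)} \psi^\lambda_{S_n}(\nu)$ for every cycle type $\nu \vdash n$.

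First I would establish a generating function for the values $\tau^\lambda_{S_n}(\nu)$ summed over $\lambda \in \OP(n)$, in analogy with Theorem~\ref{t:odd_cycles}. The key observation is that $\tau^\lambda_{S_n}$ is defined exactly like $\psi^\lambda_{S_n}$ (Definition~\ref{def:HLC_A}) except that the linear character $\oomega^x$ uses the \emph{sign} character on each wreathing group $S_{a_i}$ rather than the trivial character. Tracing through the proof of Theorem~\ref{t:summation_4_A}, the only place the wreathing-group character enters is in the count of permutations $\sigma \in S_m$ with a prescribed cycle structure; replacing the trivial character by the sign character inserts a factor $\sign(\sigma)$, which for a single cycle of length $\ell$ equals $(-1)^{\ell-1}$. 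Propagating this through the analogue of Lemma~\ref{t:summation_1_A}, Lemma~\ref{t:summation_2_A}, and Lemma~\ref{t:summation_3_A} would yield a generating function for $\tau$ differing from that of $\psi$ by a controlled sign, namely $(-1)^{j/e - 1}$ in each summand of the exponent, where $j/e = \ell$ is the cycle length in the wreathing group.

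Next I would apply the substitution $s_i = 1$ for $i$ odd and $s_i = 0$ for $i$ even (as in the proof of Theorem~\ref{t:odd_cycles}), restricting to $\lambda \in \OP(n)$. Since $\lambda \in \OP(n)$ forces every part size $i$ to be odd, and $\gcd(i,j) = e$ with $e \mid i$ forces $e$ odd as well, I would parametrize the exponent by $e$ (odd) and $d := i/e$ (odd). The crucial arithmetic point is to compare the resulting generating function against the one in the proof of Theorem~\ref{t:even_cycles} for $\sum_{\lambda \in \EP(n)} \sign(\nu)\psi^\lambda_{S_n}(\nu)$, and then remove the twist. Because $\sign(\lambda) = 1$ for all $\lambda \in \OP(n)$, Corollary~\ref{t:twist} identifies the left side of the target with $\sroots^{S_n}_{odd}$; meanwhile Theorem~\ref{t:OP-EP} already gives $\sum_{\lambda \in \OP(n)}\psi^\lambda_{S_n} = \sign \otimes \sum_{\lambda \in \EP(n)}\psi^\lambda_{S_n}$, so an alternative and cleaner route is to observe that twisting the higher Lie character by the sign on the wreathing group has the same effect as tensoring the induced character by $\sign$, which would let me deduce $\tau^\lambda_{S_n} = \sign \otimes \psi^\lambda_{S_n}$ (or a close variant) directly and then invoke Theorem~\ref{t:OP-EP}.

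The main obstacle will be pinning down the precise relationship between the twisted character $\tau^\lambda_{S_n}$ and $\sign \otimes \psi^\lambda_{S_n}$. The sign of an element of $S_n$ that lies in the centralizer $Z_x \cong \bigtimes_i G_i \wr S_{a_i}$ decomposes as a product of contributions from the cyclic factors $G_i$ and from the wreathing permutations in $S_{a_i}$, and the sign of the underlying permutation in $S_n$ of a wreath element is \emph{not} simply the sign of the wreathing permutation — it also depends on the cycle lengths $i$ and the $G_i$-classes, via the combinatorics worked out in Lemma~\ref{t:cycles_from_GwrS_to_A}. Disentangling these two sources of sign, and checking that their combination reproduces exactly the factor $(-1)^{i(j-1)/e}$ appearing in Theorem~\ref{t:summation_4_A_signed}, is the delicate step; once it is verified, the identity follows by matching generating functions term by term, exactly as in the proofs of Theorems~\ref{t:odd_cycles} and~\ref{t:even_cycles}, and the two sides collapse to the same product formula.
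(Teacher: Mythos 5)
Your proposal is correct, but it ends up at a genuinely different proof from the paper's. The paper's own argument is a four-link chain through root enumerators: Theorem~\ref{t:OP-EP} gives $\sum_{\lambda \in \EP(n)}\psi_{S_n}^\lambda = \sign \otimes \sum_{\lambda \in \OP(n)}\psi_{S_n}^\lambda$; Corollary~\ref{cor:Scharf_odd} (via Scharf) identifies $\sum_{\lambda \in \OP(n)}\psi_{S_n}^\lambda = \roots_{odd}^{S_n}$; the observation that $\sign(x^k)=\sign(x)$ for odd $k$ gives $\sign \otimes \roots_{odd}^{S_n} = \sroots_{odd}^{S_n}$; and Corollary~\ref{t:twist} (Scharf's twisted theorem, Theorem~\ref{t:twist_Scharf}) identifies $\sroots_{odd}^{S_n} = \sum_{\lambda \in \OP(n)}\tau_{S_n}^\lambda$. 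Your ``cleaner route'' bypasses root enumerators and both of Scharf's theorems entirely: you propose $\tau^\lambda_{S_n} = \sign \otimes \psi^\lambda_{S_n}$ for the relevant $\lambda$, and then Theorem~\ref{t:OP-EP} finishes. The delicate step you flag does check out, and only because all parts of $\lambda \in \OP(n)$ are odd: by the push--pull identity $(\chi\up_H^G)\otimes\sign_G = (\chi\otimes\sign_G|_H)\up_H^G$, it suffices to show $\sign|_{Z_x}\cdot\omega^x = \oomega^x$; by Lemma~\ref{t:cycles_from_GwrS_to_A}, a wreath cycle of length $\ell$ and class $\zeta_i^k$ with $d=\gcd(k,i)$ is, in $S_{i\ell}$, a product of $d$ cycles of total support $i\ell$, hence has sign $(-1)^{i\ell-d}$, and when $i$ is odd (so $d\mid i$ is odd) this equals $(-1)^{\ell-1}$, exactly the sign character on the wreathing group --- consistent with your factor $(-1)^{i(j-1)/e} = (-1)^{d(j-1)}$ reducing to $(-1)^{j/e-1}$ for odd $i$. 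Note the termwise identity $\tau^\lambda = \sign\otimes\psi^\lambda$ genuinely \emph{fails} when $\lambda$ has even parts (for even $i$ the block sign is $(-1)^d$, depending on the $G_i$-class rather than on $\ell$ alone), which is why Theorem~\ref{t:twist_Scharf} carries the $\sign(\lambda)$ factor; so the restriction to $\OP(n)$ is essential, and your hedge ``or a close variant'' was warranted. Comparing the two: the paper's route is shorter given that Scharf's results are already quoted, and it situates the identity within the root-enumerator framework of Section~\ref{sec:roots}; your route is self-contained, needs no citation of~\cite{Scharf_thesis}, and proves the stronger termwise statement $\tau^\lambda_{S_n} = \sign\otimes\psi^\lambda_{S_n}$ for every $\lambda \in \OP(n)$, of which the theorem is the sum. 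Your first route (redoing Lemmas~\ref{t:summation_1_A}--\ref{t:summation_3_A} with the sign character on the wreathing group) would also work but is superfluous once the direct argument is in hand; I would drop it and write out the sign computation above in full.
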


\begin{proof}
    By Theorem~\ref{t:OP-EP}, 
    Corollary~\ref{cor:Scharf_odd},
    Corollary~\ref{t:twist}, and the fact that the sign of an odd power of $x$ is equal to the sign of $x$:
    \[
          \sum_{\lambda \in \EP(n)} 
          \psi_{S_n}^\lambda
          = \sign \otimes \sum_{\lambda \in \OP(n)} 
          \psi_{S_n}^\lambda
        = \sign \otimes \, \roots_{odd}^{S_n}
        = \sroots_{odd}^{S_n}
        = \sum_{\lambda \in \OP(n)} 
        \tau^\lambda_{S_{n}}. 
        \qedhere
    \]
   \end{proof}

\section{Induction from a subgroup of type \texorpdfstring{$B$}{B}}
\label{sec:induction}


The 
group of signed permutations
$B_n = \ZZ_2 \wr S_n$ may be viewed as the centralizer, in $S_{2n}$, of a fixed-point-free involution (a permutation of cycle type $(2,\ldots,2)$).
The fact that its index $|S_{2n}|/|B_n| = (2n-1)!!$ divides the degree $(2n-1)!!^2$ of the odd roots enumerator $\roots_{odd}^{S_{2n}}$
raises a natural question: 
Is the character $\roots_{odd}^{S_{2n}}$ induced from a character of $B_n$?

In this section we answer this question affirmatively. We show that $\roots_{odd}^{S_{2n}}$ is induced from the sum of higher Lie characters of type $B$ corresponding to conjugacy classes in $B_n$ with positive cycles only. The number of signed permutations in $B_n$ with these cycle types is indeed $(2n-1)!!$, see Proposition~\ref{prop:Bn_positive} below. Thus the degree of the induced representation is indeed $(2n-1)!!^2$, 
which is the number of odd roots of the identity permutation in $S_{2n}$, in accordance with Corollary~\ref{cor:double}.  
Theorem~\ref{t:induced_to_even} shows that, furthermore, the two characters are equal.

\subsection{Signed permutations with positive cycles only}
\label{subsec:conjugacy_B}



Denote $[\pm n] := \{-n, \ldots, -1, 1, \ldots, n\}$.
The elements of $B_n = \ZZ_2 \wr S_n$ are {\em signed permutations}, namely bijections $\sigma:[\pm n] \to [\pm n]$ which satisfy $\sigma(-i)=-\sigma(i)$ for all $i$. 
They are encoded by the window (i.e., sequence of values on $1, \ldots, n$) $[\sigma(1),\ldots,\sigma(n)]$.  

Conjugacy classes in $B_n=\ZZ_2\wr S_n$ are parametrized by pairs of partitions $(\lambda^+,\lambda^-)$ of total size $n$.
A cycle in a signed permutation $\sigma \in B_n$ which contains a letter $i$ as well as $-i$ is called {\em negative}; otherwise it is called {\em positive}.  
The conjugacy class $C_{(\lambda^+,\lambda^-)}$ consists of all the signed permutations whose positive cycle lengths are the parts of $\lambda^+$ and the negative cycle lengths, divided by 2, are the parts of $\lambda^-$. 

\begin{proposition}\label{prop:Bn_positive}
    The number of signed permutations in $B_n$ with positive cycles only is $(2n-1)!!$.
\end{proposition}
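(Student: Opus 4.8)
The plan is to parametrize signed permutations by an underlying ordinary permutation together with a sign vector, translate the ``positive cycles only'' condition into a condition on sign products, and then carry out the resulting count.

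First I would recall the standard encoding: every $\sigma \in B_n$ is determined by a permutation $\pi \in S_n$ and a sign vector $\varepsilon \in \{+1,-1\}^n$ via $\sigma(i) = \varepsilon_i\,\pi(i)$ and $\sigma(-i) = -\sigma(i)$. The key structural fact is how a cycle of $\pi$ lifts to $[\pm n]$: a cycle $(i_1\,i_2\,\cdots\,i_\ell)$ of $\pi$ produces, under $\sigma$, either two positive cycles of length $\ell$ (mirror images of one another) or a single negative cycle of length $2\ell$, according to whether the product $\varepsilon_{i_1}\varepsilon_{i_2}\cdots\varepsilon_{i_\ell}$ equals $+1$ or $-1$. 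Consequently $\sigma$ has positive cycles only if and only if the sign product along every cycle of $\pi$ is $+1$. Establishing this correspondence cleanly is the main obstacle: one must verify that the $\sigma$-orbit of $i_1$ closes up after exactly $\ell$ steps when the sign product is $+1$, and only after $2\ell$ steps (having passed through $-i_1$) otherwise.

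Granting the correspondence, the count is immediate. For a fixed $\pi$, on each cycle of length $\ell$ exactly $2^{\ell-1}$ of the $2^\ell$ sign assignments have product $+1$; multiplying over the cycles gives $2^{n-c(\pi)}$ admissible sign vectors, where $c(\pi)$ is the number of cycles of $\pi$. Hence the number of signed permutations in $B_n$ with positive cycles only equals
\[
    \sum_{\pi \in S_n} 2^{\,n-c(\pi)} = 2^n \sum_{\pi \in S_n} \left(\tfrac12\right)^{c(\pi)} .
\]
Using the classical cycle-generating identity $\sum_{\pi \in S_n} x^{c(\pi)} = x(x+1)\cdots(x+n-1)$ at $x = 1/2$, the right-hand side becomes $2^n \cdot \prod_{j=0}^{n-1}\frac{2j+1}{2} = \prod_{j=0}^{n-1}(2j+1) = (2n-1)!!$, as claimed.

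As an alternative finish avoiding generating functions, one can argue by induction through a bijection: deleting the letters $\pm n$ from a positive-cycle signed permutation in $B_n$ yields such a permutation in $B_{n-1}$, and the inverse operation inserts $n$ either as a new positive fixed point or by splicing it into one of the $2(n-1)$ directed edges $j \mapsto \sigma(j)$ (the mirror edge being then forced, and positivity of the affected cycle preserved), giving $2(n-1)+1 = 2n-1$ choices in all. This yields the recursion $f(n) = (2n-1)\,f(n-1)$ with $f(0)=1$, and hence $f(n) = (2n-1)!!$.
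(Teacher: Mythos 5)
Your proof is correct, and it takes a genuinely different route from the paper's. The paper argues via an explicit bijection with perfect matchings: taking the fixed matching $m_0$ that pairs $i$ with $-i$ on the $2n$ points $[\pm n]$, superposing it with an arbitrary matching $m \in M_{2n}$ gives a disjoint union of even alternating cycles, and after a canonical choice of orientation one reads off a signed permutation $\sigma_m$ with positive cycles only; since $m \mapsto \sigma_m$ is a bijection and $|M_{2n}| = (2n-1)!!$, the count is immediate. You instead decompose $\sigma \in B_n$ as a pair $(\pi,\varepsilon)$, characterize ``positive cycles only'' by the condition that the sign product along every cycle of $\pi$ be $+1$ (your orbit computation $\sigma^\ell(i_1) = (\varepsilon_{i_1}\cdots\varepsilon_{i_\ell})\,i_1$ is exactly the needed verification, and it distinguishes correctly between two mirror positive $\ell$-cycles and one negative $2\ell$-cycle), count $2^{n-c(\pi)}$ admissible sign vectors per $\pi$, and evaluate $\sum_{\pi \in S_n} x^{c(\pi)} = x(x+1)\cdots(x+n-1)$ at $x = 1/2$; your insertion recursion $f(n) = (2n-1)f(n-1)$ (specifying $\sigma^{-1}(n)$ among the $2n-1$ legal values in $[\pm(n-1)] \cup \{n\}$, the value $-n$ being excluded as it would create a negative cycle) is a clean elementary alternative. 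What each approach buys: yours yields the refined statement that each underlying $\pi \in S_n$ lifts to exactly $2^{n-c(\pi)}$ positive-cycle signed permutations, connecting the count to the unsigned Stirling numbers, while the paper's matching bijection is tailored to the surrounding context --- $B_n$ is realized in Section 6 as the centralizer in $S_{2n}$ of a fixed-point-free involution, and matchings on $[\pm n]$ are precisely such involutions --- so it makes the appearance of $(2n-1)!! = |S_{2n}|/|B_n|$ structurally transparent rather than computationally derived. Both arguments are complete and correct.
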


\begin{proof}
    Consider the set $M_{2n}$ of perfect matchings on $2n$ points, labeled by the letters in $[\pm n]$. Let $m_0 \in M_{2n}$ match $i$ with $-i$, for all $1\le i\le n$. The superposition of $m_0$ and any perfect matching $m\in M_{2n}$ is 
    a union of vertex-disjoint cycles of even lengths, with edges alternating between $m_0$ and $m$. 
    Choose an orientation for each of the cycles; for concreteness, pick in each cycle the largest positive letter, say $i_0$, and orient the cycle so that the edge of $m_0$ connecting $i_0$ and $-i_0$ is oriented away from $i_0$. 
    Define a function $\sigma_m : [\pm n] \to [\pm n]$ as follows: locate each element $v \in [\pm n]$ in a cycle, and follow the path of two consecutive edges starting at $v$, an edge from $m$ followed by an edge from $m_0$ (or vice versa), according to the orientation of the cycle, to get $\sigma_m(v) \in [\pm n]$. The reader may verify that $\sigma_m \in B_n$, has only positive cycles, and $m \mapsto \sigma_m$ defines a bijection from $M_{2n}$ onto the set of all permutations in $B_n$ with positive cycles. The fact that $|M_{2n}| = (2n-1)!!$ completes the proof. 
\end{proof}

\subsection{Higher Lie characters of type \texorpdfstring{$B$}{B}}
\label{subsec:def_hLc_B}

Recall from~\cite{AHR}  
the definition of higher Lie characters of type $B$, parametrized by conjugacy classes in $B_n \cong \ZZ_2 \wr S_n$.

\begin{lemma}\label{t:centralizer_in_Bn}
    {\rm (Centralizers in $B_n$)}
    \begin{enumerate}
    
        \item[(a)]
        Let $x \in B_n$ be an element of cycle type $\blambda = (\lambda^+,\lambda^-)$ where, for each $i \ge 1$ and $\ve \in \ZZ_2$, the partition $\lambda^\ve$ has $a_{i,\ve}$ parts of size $i$. 
        Write 
        \[
            x = \prod_{i \ge 1} x_{i,+} x_{i,-}\,, 
        \]
        where $x_{i,\ve}$ has $a_{i,\ve}$ cycles of length $i$ and $\ZZ_2$-class $\ve$ $(i \ge 1,\, \ve \in \ZZ_2)$; of course, only finitely many factors here are nontrivial.
        Then the centralizer 
        $Z_x = Z_{B_n}(x)$ satisfies
        \[
            Z_{B_n}(x) = \bigtimes_{i \ge 1} \left( Z_{B_{ia_{i,+}}}(x_{i,+}) \times Z_{B_{ia_{i,-}}}(x_{i,-}) \right) ,
        \]
        and is therefore isomorphic to the direct product 
        \[
            \bigtimes_{i \ge 1} \left( G_{i,+} \wr S_{a_{i,+}} \times G_{i,-} \wr S_{a_{i,-}} \right) ,
        \]
        where $G_{i,\ve}$ is the centralizer in $B_i$ of a cycle of length $i$ and $\ZZ_2$-class $\ve$.
        By convention, $G \wr S_0$ is the trivial group while $G \wr S_1 \cong G$. 
        
        \item[(b)]
        Let $x_{i,+} \in B_i$ consist of a single cycle, of length $i$ and class $+1 \in \ZZ_2$. Then the centralizer $ Z_{x_{i,+}} = Z_{B_i}(x_{i,+}) = G_{i,+}$ is isomorphic to the group $\ZZ_2 \times \ZZ_i$, where the generator of $\ZZ_i$ is $x_{i,+}$ and the generator of $\ZZ_2$ is the central {\rm (}longest{\rm )} element $w_0 = [-1,\ldots,-i] \in B_i$.
        \item[(c)]
        Let $x_{i,-} \in B_i$ consist of a single cycle, of length $i$ and class $-1 \in \ZZ_2$. Then the centralizer $Z_{x_{i,-}} = Z_{B_i}(x_{i,-}) = G_{i,-}$ is isomorphic to the cyclic group $\ZZ_{2i}$, with generator $x_{i,-}$; note that $x_{i,-}^i = w_0$.
        
    \end{enumerate}
\end{lemma}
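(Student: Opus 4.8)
The plan is to prove the three parts in the order (b), (c), (a): the single-cycle computations in (b) and (c) identify the local factors $G_{i,+}$ and $G_{i,-}$, and the global decomposition (a) then assembles these into wreath products, exactly paralleling the passage from a single cycle to the full centralizer in the $S_n$-case of Lemma~\ref{t:centralizer_in_Sn}. Throughout I regard $B_i$ as the subgroup of the symmetric group on $[\pm i]$ consisting of the sign-symmetric permutations, so that cycle-type and centralizer statements in the symmetric group may be imported directly.

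For part (b), after conjugating I may take $x_{i,+}$ to be the standard positive $i$-cycle, acting as $k \mapsto k+1$ cyclically within $\{1,\dots,i\}$ and within $\{-1,\dots,-i\}$. All its powers are sign-preserving, so $\langle x_{i,+}\rangle \cong \ZZ_i$. The longest element $w_0 = [-1,\dots,-i]$ sends $k \mapsto -k$; it is central in $B_i$ (for any signed permutation $\sigma$ one has $w_0\sigma w_0^{-1}(k) = w_0\sigma(-k) = w_0(-\sigma(k)) = \sigma(k)$), has order $2$, and reverses all signs, hence lies outside $\langle x_{i,+}\rangle$. Therefore $\langle x_{i,+}, w_0\rangle \cong \ZZ_2 \times \ZZ_i$, of order $2i$. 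To see this is the entire centralizer I count the conjugacy class: a positive $i$-cycle is an $i$-cycle of the underlying unsigned permutation ($(i-1)!$ choices) together with a sign assignment of product $+1$ ($2^{i-1}$ choices), so the class has size $2^{i-1}(i-1)!$ and $|Z_{B_i}(x_{i,+})| = |B_i|/[2^{i-1}(i-1)!] = 2i$, forcing equality.

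For part (c) I take $x_{i,-}$ to be the standard negative $i$-cycle, which as a permutation of $[\pm i]$ is a single $2i$-cycle; thus it has order $2i$, and a direct check ($1 \mapsto 2 \mapsto \cdots \mapsto i \mapsto -1$ after $i$ steps) gives $x_{i,-}^i = w_0$. The centralizer of a $2i$-cycle in the full symmetric group on $[\pm i]$ is the cyclic group it generates, and this group already lies in $B_i$; hence $Z_{B_i}(x_{i,-}) = \langle x_{i,-}\rangle \cong \ZZ_{2i}$.

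For part (a), I use that conjugation in $B_n$ preserves both cycle length and $\ZZ_2$-class, so any $z$ commuting with $x$ permutes the cycles of $x = zxz^{-1}$ within each fixed type $(i,\ve)$ and in particular stabilizes the union of their supports. This yields the direct-product decomposition over the pairs $(i,\ve)$, reducing to the case where $x = x_{i,\ve}$ is a product of $a := a_{i,\ve}$ identical cycles: the underlying $S_a$-part of $z$ permutes these $a$ cycles freely, while on each cycle $z$ acts through its centralizer $G_{i,\ve}$ from (b)/(c), realizing the factor as $G_{i,\ve}\wr S_a$. The \emph{main obstacle}, and the only step needing real care, is confirming that no extra ``diagonal'' relations shrink this group and that the wreathing together with the local factors exhausts the centralizer; I would settle this by an order count, comparing the order $|G_{i,\ve}|^{a}\,a! = (2i)^{a}\,a!$ of the claimed wreath product against the standard centralizer order $\prod_{i,\ve}(2i)^{a_{i,\ve}}\,a_{i,\ve}!$ in $B_n$ read off from the class-size formula, so that the established inclusion of the wreath product into the centralizer is forced to be an equality.
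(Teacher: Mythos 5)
Your proof is correct, and note that the paper itself offers no proof of this lemma: it is recalled from \cite{AHR} (just as the type-$A$ analogue, Lemma~\ref{t:centralizer_in_Sn}, is stated without proof as well known), so there is no internal argument to compare against; your write-up supplies a self-contained justification of the kind the cited reference gives. Parts (b) and (c) are clean: the class-size count $2^{i-1}(i-1)!$ forcing $|Z_{B_i}(x_{i,+})| = 2i$, and the observation that a negative $i$-cycle is a single $2i$-cycle on $[\pm i]$ whose full symmetric-group centralizer $\langle x_{i,-}\rangle$ already lies in $B_i$, are both correct. The one step deserving a caveat is the closing order count in (a): you quote the ``standard'' centralizer order $\prod_{i,\ve}(2i)^{a_{i,\ve}}\,a_{i,\ve}!$ in $B_n$, and in many references that formula is itself \emph{derived} from the wreath-product description of centralizers in $\ZZ_2 \wr S_n$, so invoked as a black box it risks circularity. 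This is easily repaired in either of two ways: (i) prove the class-size formula by direct elementary counting of signed permutations with prescribed signed cycle type, independently of any centralizer structure; or (ii) dispense with counting altogether and prove the reverse inclusion directly --- given $z$ centralizing $x_{i,\ve}$, compose it with an element of the visible block-permuting copy of $S_{a_{i,\ve}}$ (which manifestly centralizes $x_{i,\ve}$ once the $a_{i,\ve}$ cycle supports are identified compatibly) so that the resulting element fixes each cycle setwise; its restriction to each cycle's support is then an element of $B_i$ centralizing a single cycle, hence lies in $G_{i,\ve}$ by (b)/(c), exhibiting $z \in G_{i,\ve} \wr S_{a_{i,\ve}}$. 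With either repair your argument is complete, and route (ii) makes the proof of (a) purely structural, matching the way the $S_n$ case is usually handled.
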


\begin{definition}\label{def:HLC_Bn}
    {\rm (Higher Lie characters in $B_n$)}
    Let $x$ be an element of cycle type $\blambda = (\lambda^+,\lambda^-)$ in $B_n$, as in Lemma~\ref{t:centralizer_in_Bn}(a).        
    \begin{enumerate}
    
        \item[(a)]
        For each $i \ge 1$ and $\ve \in \ZZ_2$, let $\omega_{i,\ve}$ be the linear character on $G_{i,\ve} \wr S_{a_{i,\ve}}$ defined as follows: 
        If $\ve = +1$ then, by Lemma~\ref{t:centralizer_in_Bn}(b), $G_{i,+} \cong \ZZ_2 \times \ZZ_i$.
        Let $\omega_{i,+}$ be trivial on $\ZZ_2$, 
        equal to a primitive irreducible character on the cyclic group $\ZZ_i$,
        and trivial on the wreathing group $S_{a_{i,+}}$.
        If $\ve = -1$ then, by Lemma~\ref{t:centralizer_in_Bn}(c), $G_{i.-} \cong \ZZ_{2i}$.
        Let $\omega_{i,-}$ be equal to a primitive irreducible character on the cyclic group $\ZZ_{2i}$,
        and trivial on the wreathing group $S_{a_{i,-}}$.
        Let
        \[
            \omega^x 
            := 
            \bigotimes_{i \ge 1} \left( \omega_{i,+} \otimes \omega_{i,-} \right),
        \]
        a linear character on $Z_x$. 
        
        \item[(b)]
        Define the corresponding {\em higher Lie character} to be the induced character
        \[
            \psi_{B_n}^x 
            := \omega^x 
            \uparrow_{Z_x}^{B_n}. 
        \]

        \item[(c)]
        It is easy to see that $\psi_{B_n}^x$ depends only on the conjugacy class $C$ (equivalently, the cycle type $\blambda$) of $x$, and can therefore be denoted $\psi_{B_n}^C$ or $\psi_{B_n}^\blambda$.
        
    \end{enumerate}
\end{definition}

\subsection{The odd root enumerator as an induced character}

The centralizer of any permutation of cycle type $(2^n)$ (i.e., a fixed-point-free involution) in $S_{2n}$ is isomorphic to $B_n$.
Similarly, the centralizer of any permutation of cycle type $(2^n 1)$ in $S_{2n+1}$ is isomorphic to $B_n$.
Denote
\[
    \eta_{B_n}
    := \sum_{\lambda \vdash n} \psi_{B_n}^{(\lambda, \varnothing)} ,
\]
the sum of type $B$ higher Lie characters corresponding to conjugacy classes with positive cycles only.

\begin{theorem}\label{t:induced_to_even}
    For any $n \ge 0$,
    \[
        \roots_{odd}^{S_{2n}} 
        = \eta_{B_{n}} \up_{B_n}^{S_{2n}}
    \]
    and
    \[
        \roots_{odd}^{S_{2n+1}} 
        = \eta_{B_{n}} \up_{B_n}^{S_{2n+1}} .
    \]
\end{theorem}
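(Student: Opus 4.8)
The plan is to prove both identities simultaneously by comparing character values (equivalently, Frobenius characteristics) class-by-class, using the generating-function machinery already developed. By Corollary~\ref{cor:Scharf_odd}, $\roots_{odd}^{S_N} = \sum_{\lambda \in \OP(N)} \psi_{S_N}^\lambda$ for $N = 2n$ or $2n+1$, so the left-hand sides are controlled by Theorem~\ref{t:odd_cycles}. The natural strategy is to show that $\eta_{B_n} \up_{B_n}^{S_N}$ has exactly the same class-function values, and the cleanest way to do this is to compute a generating function for the values of $\eta_{B_n}$ that, after inducing up to $S_N$, matches the product $\prod_{p \ge 0} \bigl( \tfrac{1 + t_{2^p}}{1 - t_{2^p}} \bigr)^{1/2^{p+1}}$ appearing in Theorem~\ref{t:odd_cycles}.

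First I would set up the induction formula explicitly: the value $\bigl( \eta_{B_n} \up_{B_n}^{S_N} \bigr)(\nu)$ on a permutation of cycle type $\nu \vdash N$ is, by the standard formula for induced characters, a weighted sum of $\eta_{B_n}$-values over those signed permutations in $B_n$ whose image under the embedding $B_n \hookrightarrow S_N$ (as the centralizer of a fixed-point-free involution, with an extra fixed point when $N = 2n+1$) lies in the $S_N$-class $\nu$. The key combinatorial input is the description in Lemma~\ref{t:centralizer_in_Bn} of how a signed cycle of length $i$ and sign $\ve$ sits inside $S_{2n}$: a positive cycle of length $i$ becomes a pair of $i$-cycles (or interacts with the involution appropriately), while a negative cycle of length $i$ becomes a single $2i$-cycle. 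Restricting to $\eta_{B_n}$, i.e.\ to $\lambda^- = \varnothing$ (positive cycles only), is exactly what is needed to land inside $\OP$-type behavior after accounting for the doubling.

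The central computation is therefore a type-$B$ analogue of Theorem~\ref{t:summation_4_A}: I would compute $\sum_{N \ge 0} \sum_{\nu \vdash N} \bigl( \eta_{B_n} \up_{B_n}^{S_N} \bigr)(\nu)\, \tfrac{\bt^{c(\nu)}}{|Z_\nu|}$ as an exponential generating function, using the wreath-product structure $Z_x \cong \bigtimes_i G_{i,+} \wr S_{a_{i,+}}$ and the M\"obius-type summation lemmas (Lemmas~\ref{t:Mobius} and~\ref{t:summation_1_A}) adapted to the type-$B$ linear characters $\omega_{i,+}$ from Definition~\ref{def:HLC_Bn}(a). Because the cited type-$B$ generating function is already available in~\cite[Theorem 4.3]{AHR}, I would specialize it by setting the ``negative'' indeterminates to zero (selecting $\lambda^- = \varnothing$) and then apply the index-doubling substitution induced by $B_n \hookrightarrow S_{2n}$ (and the single extra fixed point for $S_{2n+1}$). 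The goal is to show that the resulting series equals $\prod_{p \ge 0} \bigl( \tfrac{1 + t_{2^p}}{1 - t_{2^p}} \bigr)^{1/2^{p+1}}$, matching Theorem~\ref{t:odd_cycles} exactly, which forces the two class functions to coincide.

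The main obstacle I anticipate is the bookkeeping in the embedding step: correctly tracking how the doubling map $i \mapsto 2i$ on cycle lengths (for the $\ZZ_2$-part of each signed cycle) transforms the type-$B$ generating-function variables into the type-$A$ variables $t_j$, and verifying that the $\ZZ_2 \times \ZZ_i$ versus $\ZZ_{2i}$ distinction in Lemma~\ref{t:centralizer_in_Bn}(b,c) produces precisely the odd-length constraint in $\OP$. In particular one must check that after setting $\lambda^- = \varnothing$, the surviving contributions land on parts $j = 2^p$ with the correct fractional exponents $1/2^{p+1}$, which is the same arithmetic of the M\"obius cancellation seen in the proof of Theorem~\ref{t:odd_cycles}. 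The odd case $S_{2n+1}$ should then follow either by the same computation with an extra fixed point, or more quickly by combining the even case with Proposition~\ref{t:induced_to_odd} and the transitivity of induction, $\eta_{B_n}\up_{B_n}^{S_{2n}}\up_{S_{2n}}^{S_{2n+1}} = \eta_{B_n}\up_{B_n}^{S_{2n+1}}$.
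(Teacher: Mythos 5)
Your proposal follows essentially the same route as the paper's proof: specialize the type-$B$ generating function of \cite[Theorem 4.3]{AHR} to positive classes (setting $s_{i,-}=0$, $s_{i,+}=1$ to select $\lambda^-=\varnothing$), transfer it through the induction $B_n \hookrightarrow S_N$ via the substitution $t_{j,+}\mapsto t_j^2$, $t_{j,-}\mapsto t_{2j}$ (which the paper packages as the general Lemma~\ref{t:induced_character}), and match the resulting product with Theorem~\ref{t:odd_cycles} and Corollary~\ref{cor:Scharf_odd}. Your suggested shortcut for the odd case---deducing $\roots_{odd}^{S_{2n+1}} = \eta_{B_n}\up_{B_n}^{S_{2n+1}}$ from the even case together with Proposition~\ref{t:induced_to_odd} and transitivity of induction---is valid and would replace the paper's parallel odd-$N$ computation, but this is a minor variation rather than a different method.
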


For the proof of Theorem~\ref{t:induced_to_even}, we first need the following general lemma, which is of independent interest.

\begin{lemma}\label{t:induced_character}
    Let $G$ be a finite group, $H$ a subgroup of $G$, $\chi$ a character of $H$, and $\xi = \chi \up_H^G$ the corresponding induced character of $G$. 
    Let $\{\C_G(\ell) \,:\, \ell \in L\}$ and $\{\C_H(m) \,:\, m \in M\}$ be the sets of conjugacy classes in $G$ and $H$, respectively, where $L$ and $M$ are appropriate indexing sets.
    Let $|Z_G(\ell)| = |G|/|C_G(\ell)|$ and $|Z_H(m)| = |H|/|C_H(m)|$ be the corresponding sizes of centralizer subgroups. 
    Let $\alpha : M \to L$ be the function defined by 
    \[
        \C_H(m) \subseteq C_G(\alpha(m))
        \qquad (\forall m \in M).
    \]
    Finally, let $\{q_\ell \,:\, \ell \in L\}$ be a set of indeterminates.
    Then
    \[
        \sum_{\ell \in L} \frac{\xi(\ell) \cdot q_\ell}{|Z_G(\ell)|}
        = \sum_{m \in M} \frac{\chi(m) \cdot q_{\alpha(m)}}{|Z_H(m)|}. 
    \]
\end{lemma}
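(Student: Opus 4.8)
The plan is to recognize both sides as instances of the standard bilinear pairing between a class function and a character, and to reduce the identity to the adjunction (Frobenius reciprocity) between induction and restriction.

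First I would encode the indeterminates as a single class function on $G$. Define a function $q$ on $G$, constant on conjugacy classes, by $q(g) := q_\ell$ whenever $g \in \C_G(\ell)$. Since $1/|Z_G(\ell)| = |\C_G(\ell)|/|G|$, summing over all $g \in G$ groups elements by class and yields
\[
    \sum_{\ell \in L} \frac{\xi(\ell)\, q_\ell}{|Z_G(\ell)|}
    = \frac{1}{|G|} \sum_{g \in G} \xi(g)\, q(g).
\]
Likewise, the restriction of $q$ to $H$ is a class function on $H$ whose value on $\C_H(m)$ is $q_{\alpha(m)}$ (precisely because $\C_H(m) \subseteq \C_G(\alpha(m))$), so the same grouping on the $H$ side gives
\[
    \sum_{m \in M} \frac{\chi(m)\, q_{\alpha(m)}}{|Z_H(m)|}
    = \frac{1}{|H|} \sum_{h \in H} \chi(h)\, q(h).
\]
Thus the claim is exactly the bilinear adjunction
\[
    \frac{1}{|G|} \sum_{g \in G} (\chi \up_H^G)(g)\, q(g)
    = \frac{1}{|H|} \sum_{h \in H} \chi(h)\, q(h).
\]

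To establish this adjunction I would substitute the explicit induced-character formula already used in the proof of Lemma~\ref{t:HLC_values}, namely $(\chi\up_H^G)(g) = \frac{1}{|H|}\sum_{a \in G} \chi^0(a^{-1} g a)$, where $\chi^0$ extends $\chi$ by zero off $H$. Plugging this in and using that $q$ is a class function (so $q(g) = q(a^{-1} g a)$), I would substitute $h := a^{-1} g a$; for each fixed $a$ this is a bijection of $G$, so the inner sum over $g$ becomes $\sum_{h \in G} \chi^0(h)\, q(h)$, independent of $a$. The sum over $a \in G$ then contributes a factor $|G|$, which cancels, and since $\chi^0$ is supported on $H$ the surviving sum is $\frac{1}{|H|}\sum_{h \in H} \chi(h)\, q(h)$, as required.

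The computation is routine; the one point that deserves care is that this is the \emph{bilinear} version of Frobenius reciprocity, with no complex conjugation, and that $q$ is merely a class function rather than a character. I would emphasize that the argument never uses that $q$ is a character --- only that it is constant on $G$-conjugacy classes --- so the adjunction holds verbatim in this generality, and the passage from $G$-classes to $H$-classes is controlled entirely by the compatibility relation $\C_H(m) \subseteq \C_G(\alpha(m))$ defining $\alpha$.
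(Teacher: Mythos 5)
Your proof is correct, and it reaches the identity by a genuinely different organization than the paper, even though both ultimately rest on the same explicit induction formula $(\chi \up_H^G)(g) = \frac{1}{|H|}\sum_{a\in G}\chi^0(a^{-1}ga)$ from Isaacs. The paper works pointwise and classwise: fixing $g$, it counts fibers of the conjugation map $a \mapsto a^{-1}ga$ onto $\C_G(g)$ (each fiber a coset of $Z_G(g)$) to get
\[
    \xi(g) = \frac{|Z_G(g)|}{|H|} \sum_{h \in \C_G(g) \cap H} \chi(h),
\]
and then sums over $\ell \in L$, decomposing each intersection $\C_G(\ell) \cap H$ as the disjoint union of the $H$-classes $\C_H(m)$ with $\alpha(m) = \ell$. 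You instead sum over the whole group at once: encoding the indeterminates as a $G$-class function $q$ turns the two sides into the bilinear pairings $\frac{1}{|G|}\sum_{g \in G}\xi(g)q(g)$ and $\frac{1}{|H|}\sum_{h \in H}\chi(h)q(h)$, so the lemma becomes bilinear Frobenius reciprocity for a generic class function, which you verify by exchanging the double sum and using that $g \mapsto a^{-1}ga$ is a bijection of $G$ for each fixed $a$. What your route buys: the function $\alpha$ plays no computational role --- it is absorbed into the single observation that $q$ restricted to $H$ is an $H$-class function taking the value $q_{\alpha(m)}$ on $\C_H(m)$ --- and the statement is exposed as an instance of the adjunction $\langle \chi\up_H^G, q\rangle_G = \langle \chi, q \rangle_H$, with your correct caveat that this is the bilinear (conjugation-free) form and that $q$ need not be a character. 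What the paper's route buys is the intermediate pointwise formula for $\xi(g)$ in terms of $\C_G(g)\cap H$, which it has already derived verbatim in the proof of Lemma~\ref{t:HLC_values} and can therefore reuse; your version trades that reuse for a cleaner conceptual statement. Both arguments are complete and all your steps (the regrouping by classes using $1/|Z_G(\ell)| = |\C_G(\ell)|/|G|$, the class-function invariance $q(g) = q(a^{-1}ga)$, and the cancellation of the factor $|G|$) check out.
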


\begin{proof}
    Define a function $\chi^0: G \to \CC$ by
    \[
        \chi^0(g) :=
        \begin{cases}
            \chi(g), &\text{if } g \in H; \\
            0, &\text{if } g \in G \setminus H.
        \end{cases}
    \]
    By~\cite[(5.1)]{Isaacs}, an explicit formula for the induced character $\xi = \chi \up_H^G$ is
    \[
        \xi(g)
        = \sum_{G = \cup_a aH} \chi^0(a^{-1} g a)
        = \frac{1}{|H|} \sum_{a\in G} \chi^0(a^{-1} g a)
        \qquad (\forall g \in G) .
    \]
    The mapping $f : G \to \C_G(g)$ defined by
    $f(a) := a^{-1} g a$ $(\forall a \in G)$ is surjective, and satisfies:
    $f(a_1) = f(a_2)$ if and only if $a_1 a_2^{-1} \in Z_G(g)$.
    Hence
    \[
        \xi(g)
        = \frac{|Z_G(g)|}{|H|} \sum_{z \in \C_G(g)}\chi^0(z)
        = \frac{|Z_G(g)|}{|H|} \sum_{h \in \C_G(g) \cap H} \chi(h)
        \qquad (\forall g \in G).
    \]
    Assume that the $G$-conjugacy class of $g$ is $\C_G(\ell)$.
    The intersection $\C_G(\ell) \cap H$ is a disjoint union of the $H$-conjugacy classes $\C_H(m)$ for which $\alpha(m) = \ell$.
    Summing over all $\ell \in L$, it follows that
    \[
        \sum_{\ell \in L} 
        \frac{\xi(\ell) \cdot q_\ell}{|Z_G(\ell)|}
        = \sum_{\ell \in L} 
        \frac{q_\ell}{|H|}
        \sum_{h \in C_G(\ell) \cap H} \chi(h)
        = \sum_{m \in M} \frac{q_{\alpha(m)}}{|H|}
        \sum_{h \in C_H(m)} \chi(h)
        = \sum_{m \in M} \frac{q_{\alpha(m)} \cdot |C_H(m)|}{|H|}
        \chi(m),
    \]
    which is equivalent to the required formula.  
\end{proof}

The proof of Theorem~\ref{t:induced_to_even} is also based on a generating function for higher Lie characters of the hyperoctahedral group $B_n$, proved in~\cite{AHR}.

Let $\blambda = (\lambda^+,\lambda^-)$ and $\bnu = (\nu^+,\nu^-)$ be two bipartitions of $n$. 
For each integer $i \ge 1$ and sign $\ve \in \ZZ_2 = \{+1,-1\}$, let $a_{i,\ve}$ be the number of parts of size $i$ in the partition $\lambda^\ve$.
Similarly, for each integer $j \ge 1$ and sign $\theta \in \ZZ_2 = \{+1,-1\}$, let $b_{j,\theta}$ be the number of parts of size $j$ in the partition $\nu^\theta$. 
Thus
\[
    \sum_{i,\ve} i a_{i,\ve} 
    = \sum_{j,\theta} j b_{j,\theta} 
    = n.
\]
Let $\bs = (s_{i,\ve})_{i \ge 1, \ve \in \ZZ_2}$ and $\bt = (t_{j,\theta})_{j \ge 1, \theta \in \ZZ_2}$ be two countable sets of indeterminates.
Denote 
$\bs^{c(\blambda)} := \prod_{i,\ve} s_{i,\ve}^{a_{i,\ve}}$ 
and $\bt^{c(\bnu)} := \prod_{j,\theta} t_{j,\theta}^{b_{j,\theta}}$.
Consider the ring $\CC[[\bs,\bt]]$ of formal power series in these indeterminates.

\begin{remark}
    The notation used in~\cite{AHR} is slightly different from the notation used in the current paper: wherever we use $s_{i,\ve}$ and $t_{j,\theta}$, \cite{AHR} uses $s_{i,\ve}^i$ and $t_{j,\theta}^j$, respectively. 
    The following result is stated in our current notation.
\end{remark}

\begin{theorem}\label{t:summation_4_Bn}
{\rm \cite[Theorem 4.3]{AHR}}
    \begin{align*}
        \sum_{n \ge 0}  
        \sum_{\blambda \vdash n} \sum_{\bnu \vdash n} 
        \psi_{B_n}^\blambda(\bnu) \,
        \frac{\bs^{c(\blambda)} \bt^{c(\bnu)}}{|Z_{B_n}(\bnu)|} 
        &= \exp \left( \sum_{i,\ve} \sum_{j,\theta} \sum_{e | \gcd(i,j)}
        K_{\ve,\theta}(e) \,
        \frac{s_{i,\ve}^{j/e} t_{j,\theta}^{i/e}}{2ij/e}
        \right) .
    \end{align*}
    Here, for $\ve, \theta \in \ZZ_2 = \{+1,-1\}$ and $e \ge 1$,
    \[
        K_{\ve,\theta}(e)
        := \ve \theta \cdot \mu(2e)
        + \frac{(1+\ve)(1+\theta)}{2} \cdot \mu(e),
    \] 
    where $\mu(\cdot)$ is the classical M\"obius function.
\end{theorem}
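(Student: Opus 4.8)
The plan is to follow, step by step, the proof of the type $A$ generating function (Theorem~\ref{t:summation_4_A}), adapting each intermediate lemma to the wreath-product structure of the centralizers in $B_n$ recorded in Lemma~\ref{t:centralizer_in_Bn} and to the linear character $\omega^x$ of Definition~\ref{def:HLC_Bn}. The starting point is a type $B$ version of Lemma~\ref{t:HLC_values}: since that lemma is a purely group-theoretic statement about the value of an induced character, its proof carries over verbatim with $S_n$ replaced by $B_n$, giving $\psi_{B_n}^x(g) = \frac{|C_x|}{|C_g|}\sum_{z \in C_g \cap Z_x} \omega^x(z)$. As in type $A$, this reduces the whole computation to summing the linear character $\omega^x$ over the intersection of a conjugacy class with a centralizer, and the real work is to organize this sum cycle by cycle.

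The crucial new ingredient is a type $B$ analogue of Lemma~\ref{t:cycles_from_GwrS_to_A} and Corollary~\ref{t:cycles2_A}, describing how a single cycle $\sigma$ of length $\ell$ in a wreath factor $G_{i,\ve}\wr S_{a_{i,\ve}}$ unfolds, as an element of $B_{i\ell}$, into signed cycles --- recording not only their common length $j$ and their number $d$, but also their sign $\theta \in \ZZ_2$ (positive or negative). Here the two cases $\ve = +1$ and $\ve = -1$ must be treated separately, using the explicit identifications $G_{i,+}\cong\ZZ_2\times\ZZ_i$ and $G_{i,-}\cong\ZZ_{2i}$ from Lemma~\ref{t:centralizer_in_Bn}(b),(c). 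The sign $\theta$ of the resulting cycles is governed by the product, taken around the $\ell$-cycle $\sigma$, of the $\ZZ_2$-components of the chosen classes, together with the parity of the power of the long element $w_0$ that is picked up; the length $j$ and the multiplicity $d$ are controlled, exactly as in type $A$, by a greatest-common-divisor computation on the cyclic part.

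With this structural description in place, I would prove the $B$-analogue of the single-cycle summation Lemma~\ref{t:summation_1_A}: summing $\omega^x(z)$ over all $z$ supported on a fixed $\ell$-cycle $\sigma$ and producing $d$ signed cycles of length $j$ and sign $\theta$, one obtains (after dividing by the appropriate power of $|G_{i,\ve}|=2i$) exactly the weight $K_{\ve,\theta}(e)$, where $e$ is the common divisor of $i$ and $j$ attached to this configuration. This is the step I expect to be the main obstacle. The primitive character $\omega_{i,\ve}$ has to be summed over the set of admissible $G_{i,\ve}$-classes yielding sign $\theta$: for positive cycles this is a sum of primitive $i$-th roots of unity refined by the $\ZZ_2$-twist, and for negative cycles a sum of primitive $2i$-th roots of unity, in each case restricted by a $\gcd$ condition. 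Evaluating these sums with the M\"obius identity of Lemma~\ref{t:Mobius} --- applied once for the order-$i$ character and once for the order-$2i$ character --- produces the two terms $\mu(e)$ and $\mu(2e)$; assembling the four sign combinations $(\ve,\theta)$ into the single closed form $K_{\ve,\theta}(e)=\ve\theta\,\mu(2e)+\tfrac{(1+\ve)(1+\theta)}{2}\,\mu(e)$ is the delicate bookkeeping at the heart of the argument.

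Finally, the assembly mirrors the remainder of Section~\ref{sec:gf_hLc_A}. Replacing the scalar $\mu(e)$ by $K_{\ve,\theta}(e)$ and the factor $1/i$ by $1/(2i)$ (reflecting $|G_{i,\ve}|=2i$), the analogues of Lemmas~\ref{t:summation_2_A} and~\ref{t:summation_3_A} follow by the same counting of cycle structures in $S_{a_{i,\ve}}$ and the same application of the exponential formula, now carried out separately for each pair $(i,\ve)$. Taking the product of the resulting exponentials over all pairs $(i,\ve)$, and using the direct-product decomposition of $Z_{B_n}(x)$ together with the multiplicativity of $\omega^x$, yields the stated generating function, with double summation over $(i,\ve)$ and $(j,\theta)$ and the weight $K_{\ve,\theta}(e)/(2ij/e)$, completing the proof.
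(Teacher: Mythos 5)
Note first that the paper itself gives no proof of Theorem~\ref{t:summation_4_Bn}: it is quoted from \cite[Theorem 4.3]{AHR}, and the paper's Section~\ref{sec:gf_hLc_A} is explicitly presented as the $S_n$-version of that result's proof. Your proposal reconstructs essentially that same argument --- a $B_n$-analogue of Lemma~\ref{t:HLC_values}, single-cycle unfolding in $G_{i,\ve}\wr S_{a_{i,\ve}}$ keeping track of the sign $\theta$, M\"obius evaluation of the character sums over $G_{i,+}\cong\ZZ_2\times\ZZ_i$ and $G_{i,-}\cong\ZZ_{2i}$ yielding $K_{\ve,\theta}(e)$, then the exponential formula --- so it is the correct approach and essentially the same one, with the caveat that the single-cycle evaluation you defer as ``delicate bookkeeping'' is the heart of the matter and would need to be carried out in full in a self-contained write-up.
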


\begin{proof}[Proof of Theorem~\ref{t:induced_to_even}]
    Following the definition of $\eta_{B_n}$ set, in Theorem~\ref{t:summation_4_Bn},
    \[
        s_{i,\ve} 
        := \begin{cases}
            1, &\text{if $\ve = +1$;} \\
            0, &\text{otherwise}
        \end{cases}
    \]
    for all $i \ge 1$. Then
    \begin{align*}
        \sum_{n \ge 0}  
        \sum_{\bnu \vdash n} 
        \eta_{B_n}(\bnu) \,
        \frac{\bt^{c(\bnu)}}{|Z_{B_n}(\bnu)|} 
        &= \exp \left( \sum_{i} \sum_{j,\theta} \sum_{e | \gcd(i,j)}
        K_{+,\theta}(e) \,
        \frac{t_{j,\theta}^{i/e}}{2ij/e} 
        \right) .
    \end{align*}
    Inducing from $B_n$ to $S_{2n}$ (or to $S_{2n+1}$), a cycle of length $j$ and sign $+1$ in $B_n$ is a product of two disjoint cycles of length $j$ in $S_n$, while a cycle of length $j$ and sign $-1$ in $B_n$ is a cycle of length $2j$ in $S_n$. We therefore set
    \[
        \alpha(t_{j,+}) = t_j^2
        \qquad (\forall j \ge 1)
    \]
    and
    \[
        \alpha(t_{j,-}) = t_{2j}
        \qquad (\forall j \ge 1).
    \]
    Consider first induction to $S_{2n}$.
    By Lemma~\ref{t:induced_character} for $G = S_{2n}$, $H = B_n$, $\chi = \sum_{\bnu \vdash n} \eta_{B_n}(\bnu)$ and $\xi_{2n} = \chi \up_{B_n}^{S_{2n}}$,
    \begin{align*}
        \sum_{n \ge 0}  
        \sum_{\nu \vdash 2n} 
        \xi_{2n}(\nu) \,
        \frac{\bt^{c(\nu)}}{|Z_{S_{2n}}(\nu)|} 
        &= \exp \left( \sum_{i} \sum_{j} \sum_{e | \gcd(i,j)}
        \frac{K_{+,+}(e) \cdot t_j^{2i/e} + K_{+,-}(e) \cdot t_{2j}^{i/e}}{2ij/e} 
        \right) .
    \end{align*}
    Using
    \[  
        K_{+,+}(e) = \mu(2e) + 2 \cdot \mu(e)
    \]
    and
    \[
        K_{+,-}(e) = - \mu(2e),
    \]
    and denoting $d := i/e$, we get
    \begin{align*}
        \sum_{n \ge 0}  
        \sum_{\nu \vdash 2n} 
        \xi_{2n}(\nu) \,
        \frac{\bt^{c(\nu)}}{|Z_{S_{2n}}(\nu)|} 
        &= \exp \left( 
        \sum_{d \ge 1} \sum_{j \ge 1} \sum_{e | j}
        \frac{(\mu(2e) + 2 \cdot \mu(e)) \cdot t_j^{2d} - \mu(2e) \cdot t_{2j}^{d}}{2dj} 
        \right) .
    \end{align*}
    Since
    \[
        \sum_{e | j} \mu(2e)
        = - \sum_{\substack{e | j \\ e \text{ odd}}} \mu(e)
        = \begin{cases}
            -1, &\text{if $j = 2^p$ for $p \ge 0$;} \\
            0, &\text{otherwise}
        \end{cases}
    \]
    and
    \[
        \sum_{e | j} (\mu(2e) + 2 \cdot \mu(e))
        = \begin{cases}
            1, &\text{if } j = 1; \\
            -1, &\text{if $j = 2^p$ for $p \ge 1$;} \\
            0, &\text{otherwise,}
        \end{cases}
    \]
    it follows that
    \begin{align*}
        \sum_{n \ge 0}  
        \sum_{\nu \vdash 2n} 
        \xi_{2n}(\nu) \,
        \frac{\bt^{c(\nu)}}{|Z_{S_{2n}}(\nu)|} 
        &= \exp \left( 
        \sum_{d \ge 1} 
        \frac{t_1^{2d} + t_2^{d}}{2d}         
        + \sum_{d \ge 1} \sum_{p \ge 1} 
        \frac{- t_{2^p}^{2d} + t_{2^{p+1}}^{d}}{2^{p+1} d} 
        \right) \\
        &= \exp \left( 
        \sum_{d \ge 1} 
        \frac{t_1^{2d}}{2d}         
        + 
        \sum_{d \ge 1} \sum_{p \ge 1} 
        \frac{-t_{2^p}^{2d}}{2^{p+1} d}        
        + 
        \sum_{d \ge 1} 
        \frac{t_2^{d}}{2d}         
        + 
        \sum_{d \ge 1} \sum_{p \ge 1} 
        \frac{t_{2^{p+1}}^{d}}{2^{p+1} d} 
        \right) \\
        &= \exp \left( 
        \sum_{d \ge 1} 
        \frac{t_1^{2d}}{2d}         
        + 
        \sum_{d \ge 1} \sum_{p \ge 1} 
        \frac{-t_{2^p}^{2d}}{2^{p+1} d}        
        + 
        \sum_{d \ge 1} \sum_{p \ge 1} 
        \frac{t_{2^{p}}^{d}}{2^{p} d} 
        \right) \\
        &= (1 - t_1^2)^{-1/2}
        \cdot \prod_{p \ge 1} \left(
        1 - t_{2^p}^2 \right)^{1/2^{p+1}} 
        \cdot \prod_{p \ge 1} \left(
        1 - t_{2^p} \right)^{-1/2^{p}} \\
        &= (1 - t_1^2)^{-1/2}
        \cdot \prod_{p \ge 1} \left(
        \frac{1 + t_{2^p}}{1 - t_{2^p}} 
        \right)^{1/2^{p+1}} .
    \end{align*}
    A similar application of Lemma~\ref{t:induced_character} with $G = S_{2n+1}$, $H = B_n$, $\chi = \sum_{\bnu \vdash n} \eta_{B_n}(\bnu)$ and $\xi_{2n+1} = \chi \up_{B_n}^{S_{2n+1}}$ yields
    \[
        \sum_{n \ge 0}  
        \sum_{\nu \vdash 2n+1} 
        \xi_{2n+1}(\nu) \,
        \frac{\bt^{c(\nu)}}{|Z_{S_{2n+1}}(\nu)|} 
        = t_1 \cdot (1 - t_1^2)^{-1/2}
        \cdot \prod_{p \ge 1} \left(
        \frac{1 + t_{2^p}}{1 - t_{2^p}} 
        \right)^{1/2^{p+1}} ,
    \]
    so altogether we have
    \[
        \sum_{n \ge 0}  
        \sum_{\nu \vdash n} 
        \xi_{n}(\nu) \,
        \frac{\bt^{c(\nu)}}{|Z_{S_{n}}(\nu)|} 
        = \prod_{p \ge 0} \left(
        \frac{1 + t_{2^p}}{1 - t_{2^p}} 
        \right)^{1/2^{p+1}} .
    \]
    Comparing this formula to the one in Theorem~\ref{t:odd_cycles} and to Corollary~\ref{cor:Scharf_odd} shows that, indeed,
    \[
        \xi_{n}
        = \sum_{\lambda \in \OP(n)} 
        \psi_{S_{n}}^\lambda 
        = \roots_{odd}^{S_n}
    \]
    for all $n \ge 0$, as claimed.
\end{proof}

\end{document}